\documentclass{imsart}

\RequirePackage{amsthm,amsmath,amsfonts,amssymb}
\RequirePackage[numbers]{natbib}
\RequirePackage[colorlinks,citecolor=blue,urlcolor=blue]{hyperref}
\RequirePackage{graphicx}

\usepackage{mathrsfs}
\usepackage{mathptmx,amssymb}
\usepackage{mathtools}
\usepackage{amsmath}
\usepackage{color}
\definecolor{BLUE}{RGB}{41,86,143}
\definecolor{RED}{RGB}{178,31,53}
\usepackage{graphicx}

\usepackage{amssymb}             
\usepackage{accents}
\DeclareSymbolFontAlphabet{\amsmathbb}{AMSb}

\allowdisplaybreaks

\newcommand{\refappendix}[1]{\hyperref[#1]{Appendix}}

\startlocaldefs

\newcommand{\bb}[1]{\mathbb{#1}}
\newcommand{\E}{\ensuremath{ \bb{E} } }
\newcommand{\Q}{\ensuremath{ \bb{Q} } }
\newcommand{\bo}[1]{\ensuremath{{\bf #1 } }}

\newcommand{\re}{\ensuremath{\mathbb{R}}}
\newcommand{\paren}[1]{\ensuremath{\left( #1\right) }}
\newcommand{\F}{\ensuremath{\mathscr{F}}}

\newcommand{\p}{\mathbb{P}}

\newcommand{\z}{\ensuremath{\mathbb{Z}}}
\newcommand{\na}{\ensuremath{\mathbb{N}}}

\newcommand{\mc}[1]{\ensuremath{\mathscr{#1}}}
\newcommand{\G}{\ensuremath{\mc{G}}}
\newcommand{\floor}[1]{\ensuremath{\lfloor #1\rfloor}}

\theoremstyle{plain}
\newtheorem{teo}{Theorem}
\newtheorem{lemma}{Lemma}
\newtheorem{coro}{Corollary}
\newtheorem{propo}{Proposition}

\theoremstyle{definition}

\newtheorem{remark}{Remark}

\newcounter{subsubsubsection}[subsubsection]
\renewcommand{\thesubsubsubsection}{\thesubsubsection.\arabic{subsubsubsection}}
\newcommand{\subsubsubsection}[1]{%
  \refstepcounter{subsubsubsection}%
  \paragraph*{\thesubsubsubsection\quad #1}%
}

\usepackage{bm}

\endlocaldefs

\begin{document}

\begin{frontmatter}
\title{Uniform sampling of multitype continuous-time Bienaym\'e-Galton-Watson trees}
\runtitle{Uniform sampling of MBGW trees}

\begin{aug}
\author[A]{\fnms{Osvaldo}~\snm{Angtuncio Hern\'andez}\ead[label=e1]{osvaldo.angtuncio@cimat.mx}\orcid{0009-0008-9599-623X}},
\author[B]{\fnms{Simon C.}~\snm{Harris}\ead[label=e2]{simon.harris@auckland.ac.nz}}
\and
\author[C]{\fnms{Juan Carlos}~\snm{Pardo}\ead[label=e3]{jcpardo@cimat.mx}}

\address[A]{Departamento de Probabilidad y Estad\'istica, Centro de Investigaci\'on en Matem\'aticas\printead[presep={,\ }]{e1,e3}}

\address[B]{University of Auckland, New Zealand\printead[presep={,\ }]{e2}}

\address[C]{Departamento de Probabilidad y Estad\'istica, Centro de Investigaci\'on en Matem\'aticas}

\end{aug}

\begin{abstract}
We study the genealogy of a sample of $k$ individuals taken uniformly without replacement from a continuous-time multitype Bienaym\'e--Galton--Watson process at  fixed times. Our results are quite general, requiring only that the process be non-simple and conservative, and that every type has a positive probability to  ``eventually lead to'' all other types within the population.  The corresponding single-type case has recently been studied by Johnston (2019),
Harris, Johnston, and Roberts (2020),
and  Harris, Johnston, and Pardo (2024).
Our approach is based on a $k$-spine decomposition and a suitable change of measure under which the distinguished spines form a uniform sample at time $T$, while the population size is subject to $k$-size biasing and exponential discounting. This construction preserves a branching Markov property and yields an explicit description of the genealogical tree at fixed times.

In particular, we characterise spine splitting times, offspring distributions, and type-dependent ancestral structures, revealing rich interactions between types that are absent in the single-type setting. The present results form the basis of a forthcoming series of papers in which limiting genealogical behaviour is analysed under various asymptotic regimes and more general sampling schemes by the authors, see Angtuncio et al. (2026b), (2026c) and (2026d).
\end{abstract}

\begin{keyword}[class=MSC]
\kwd[Primary ]{60J80, 60G09}
\kwd[; secondary ]{60J90, 60J95}
\end{keyword}

\begin{keyword}
\kwd{Multitype Bienaym\'e-Galton-Watson tree}
\kwd{coalescent process}
\kwd{genealogy}
\kwd{spines}
\kwd{sampling form a population}
\end{keyword}

\end{frontmatter}


\section{Introduction}

Continuous-time multitype Bienaym\'e-Galton-Watson (MBGW)  processes arise as a natural generalisation of Bienaym\'e-Galton-Watson  (BGW) processes, in which individuals are differentiated by types that determine their offspring distribution. They were first studied by Kolmogorov and his coauthors  in the discrete time setting during the 1940s. We refer to Athreya and Ney \cite{MR2047480}  and  Sewastjanow \cite{MR0408019} for good introductions to these processes. It turns out that their analysis is considerably eased under an irreducibility assumption, namely, that every type has a positive probability to ``eventually lead to'' all other types within the population (i.e. each other type can be produced after some number of generations with positive probability). Under this hypothesis, together with  the finiteness of the mean offspring matrix, one can use variants of the Perron-Frobenius theorem to describe the asymptotic behaviour of the mean matrix, thereby obtaining both qualitative and quantitative results for the original process. Informally, the large-scale behaviour of irreducible MBGW processes resembles that of single-type BGW processes whose mean offspring distribution is given by the Perron eigenvalue of the mean matrix. On the other hand, Janson \cite{MR2226887} has shown that when the irreducibility assumption fails, a wide variety of behaviours may arise in the discrete setting.

In this manuscript, we will be particularly interested in the genealogy of a sample of $k> 1$ particles taken uniformly without replacement, from a population alive at fixed times for general continuous-time MBGW process.

Define $\mathbb{N}:=\{1,2,3,\dots\}$, $\mathbb{Z}_+:=\{0\}\cup\mathbb{N}$ and let $d\in \mathbb{N}$ and $\alpha_1,\dots,\alpha_d>0$.   We consider a continuous-time MBGW process with $d$ types, as follows.
Start initially with one individual  (or particle) of type $m\in\{1,2,\dots,d\}$. 
All individuals currently alive evolve independently of one another, with each individual's behaviour depending on its type.
Any individual currently alive which is of type $m\in\{1,\ldots, d\}$ will die at rate $\alpha_m$, that is, each individual of type $m$ has an independent exponentially distributed lifetime of rate $\alpha_m$.
At the end of its lifetime, an individual of type $m$ is replaced by a random number of offspring across the $d$ types, according to an independent realisation of the random vector $\mathbf{L}_m=(L^{(1)}_m, \ldots, L^{(d)}_m)\in\mathbb{Z}_+^d$, 
where  $p_m({\bm \ell}):=\mathbb{P}(\mathbf{L}_m={\bm\ell})$ for $\bm{\ell}\in \mathbb{Z}^d_+$,
and
\[
\sum_{{\bm \ell}\in \mathbb{Z}^d_+}p_m({\bm \ell})=1, \qquad\textrm{for all }\,\, m\in\{1,\ldots, d\}.
\]
In other words, when an individual of type $m$ dies, $p_m({\bm \ell})$ is the probability that it will be replaced by $\ell_1$ offspring of type 1, $\ell_2$ offspring of type 2, and so on, for each ${\bm \ell}=(\ell_1,\dots,\ell_d)\in\mathbb{Z}_+^d$.
Once born, these offspring all evolve independently  following a similar law to their parent but according to their own type, as described above, and so on.

For each time $t\geq0$, we let $\mathbf{Z}_t:=(Z_t^{(1)}, \ldots, Z_t^{(d)})$ where $Z_t^{(m)}$ represents the  total number of type $m$ individuals alive at time $t$. 
We  also define $N_t:=\sum_{m=1}^d Z^{(m)}_t$ which represents  the total number of individuals alive at time $t>0$.
The process $\mathbf{Z}$ is said to satisfy the \emph{branching property}, meaning that the law of the process $\mathbf{Z}$  starting from $\mathbf{x}+\mathbf{y}\in\mathbb{Z}_+^d$ coincides with that of the sum of two independent copies of  $\mathbf{Z}$ started from $\mathbf{x}$ and $\mathbf{y},$ respectively.
The process $\mathbf{Z}=(\mathbf{Z}_t)_{t\ge 0}$ 
is commonly referred to as the \emph{MBGW process}. However, as we wish to consider the genealogies of individuals in the population, we will require an enriched process which includes information about how individuals are related to one another. 
We will do this by assigning a unique label to each individual according to the very convenient \emph{Ulam-Harris} convention.  For this construction, we  assume that the process $\mathbf{Z}$ is \emph{conservative}, that is, it  does not explode  almost surely. A sufficient condition for conservativeness of $\mathbf{Z}$ is  given by Savits \cite{MR282426}, namely
\begin{equation}\label{savits}
\int^1\frac{{\rm d}s}{s-\overline{F}(s)}<\infty, \qquad \textrm{with}\qquad \overline{F}(s)=\max_{m\in \{1,\ldots, d\}}\sum_{n\ge 2} \mathbb{P}\left( \sum_{i=1}^d L^{(i)}_m=n\right) s^{n}.
\end{equation}
See for instance Proposition 2.12 and Remark 2.13 in the aforementioned reference.

The Ulam-Harris labelling encodes the genealogical structure of a single-type family tree as follows. 
The initial ancestor at the root of a tree is labelled by $\emptyset$ 
(this convention is particularly convenient when considering subtrees within a larger tree).
Let $\mathbb{U}:=\cup_{n=0}^{\infty}\mathbb{N}^n$ denote the set of all finite sequences of positive integers, with the convention $\mathbb{N}^0:=\{\emptyset\}$.
Whenever an individual with label $u\in\mathbb{N}^n$ dies and is replaced by $\ell$ offspring, these offspring are labeled consecutively as $u1, u2, \dots, u\ell\in\mathbb{N}^{n+1}$ according to their order of appearance, 
where each  label is formed by concatenating the parent's label with the unique index of the offspring amongst its siblings. This recursive scheme extends naturally to the entire genealogical tree. 
We use $|u|$ to represent the generation of the individual labelled $u$, that is,  $|u|=n$ whenever $u\in\mathbb{N}^n$.
For example, an individual with label $u=(2,3,5)$ would refer to the 5th child of the 3rd child of the 2nd child of the initial ancestor, and $|u|=3$ means $u$ is in the 3rd generation since the initial ancestor.
We also write $u\prec v$ if $u$ is a strict ancestor of $v$, and $u\preceq v$ if $u$ is an ancestor of $v$ or $u=v$.

For each $t\ge 0$, we let $\mathcal{N}_{t}$ be the set of Ulam-Harris labels of all individuals alive at time $t$.
Then, with a single initial ancestor, we have $\mathcal{N}_{0}=\{\emptyset\}$. 
For any individual $u\in\mathcal{N}_{t}$, let its type be represented by $Z^u_t$. 
Then $N_t={\rm card}\{\mathcal{N}_t\}$, 
and $Z_t^{(m)}=\sum_{u\in\mathcal{N}_{t}} \mathbf{1}_{\{Z^u_t=m\}}$. Thus, the collection $((u,Z^u_t) : u\in \mathcal{N}_t)$ contains information about each individual's type at time $t$ as well as its genealogical history up until time $t$. 
As this information will now be sufficient for our needs, we will let $(\mathcal{F}_t)_{t\ge 0}$ be the 
right-continuous filtration generated by the enriched process $((u,Z^u_t) : u\in \mathcal{N}_t)_{t\geq 0}$,
that is, $\mathcal{F}_t:=\sigma( (u,Z^u_s)_{u\in \mathcal{N}_s} : s\leq t)$.
We assume that the filtrations $(\mathcal{F}_t)_{t\ge 0}$ satisfy the usual hypotheses, and let $\mathcal{F}:=\sigma(\cup_{t\ge 0}\mathcal{F}_t )$.

For each $m\in\{1,\dots,d\}$, we let $\mathbb{P}_{m}$ be the law of our MBGW process, including its genealogical information, starting from a single initial ancestor of type $m$ with label $\emptyset$. 
These probability measures are defined on the filtered probability space $(\Omega, \mathcal{F}, (\mathcal{F}_t)_{t\ge 0})$.

Later, we will want to start with more than one initial ancestor, specifying a given initial number $z_m$ of each type $m$.
Unless explicitly stated otherwise, we typically won't be concerned with the precise labelling of the initial individuals when starting with more than one individual. In such situations, by default we will simply assign the labels $1,\dots,n_0$ at random when $n_0:=\sum_{m=1}^d z_m>1$ (otherwise, we use label $\emptyset$ when $n_0=1$).
Then, for $\mathbf{z}=(z_1,\dots,z_d)\in \mathbb{Z}_+^d$, we will denote by $\mathbb{P}_{\mathbf{z}}$  the law of the MBGW process, including its genealogical information, starting with $z_m$ individuals of type $m$ for $m=1,\dots,d$. 
Note, letting $(\mathbf{e}_m)_{m\in[d]}$ denote the canonical basis in $\mathbb{R}^d$, that is, $\mathbf{e}_m$ is a vector in $\mathbb{R}^d$ with value 1 in its $m$-th coordinate and $0$ in all others, 
we also have $\mathbb{P}_{\mathbf{e}_m}=\mathbb{P}_{m}$, in agreement with before.

We call  $\mathbf{L}:=(\mathbf{L}_1,\dots,\mathbf{L}_d)$ the \emph{offspring random variable}, 
and ${\bo p}:=({\bo p}({\bm \ell}))_{{\bm \ell}\in \mathbb{Z}^d_+}$
 the \emph{offspring distribution of the MBGW}, where
 \[
{\bo p}({\bm \ell}):=(p_1({\bm \ell}), \ldots, p_d({\bm \ell}))\in [0,1]^d.
\]
Note, for any two vectors ${\bm r},{\bm \ell}\in \mathbb{Z}_+^d$, we often make use of the product notation $\bo{r^{\bm \ell}}:=r_1^{\ell_1}\cdots r_d^{\ell_d}$.
For $\bo{r}\in [0,1]^d$, we denote by $\bo{f} (\bo r):=\paren{f_1(\bo{r}),\ldots, f_d(\bo{r})}\in [0,1]^d$ the probability generating function associated with the offspring  distribution ${\bo p}$, where
for every component $i\in \{1,\ldots,d\}$
\[
f_i(\bo{r}):=\E_i\left[\prod_{m=1}^dr_m^{L^{(m)}}\right]=\sum_{{\bm \ell}\in \mathbb{Z}^d_+}p_i(\bm \ell )\bo{r^{\bm \ell}}=\sum_{{\bm \ell}\in \mathbb{Z}^d_+}p_i\paren{\ell_1,\ldots, \ell_d}r_1^{\ell_1}\cdots r_d^{\ell_d}.
\]

For simplicity of exposition, we write $[d]:=\{1,\ldots,d\}$. Recall that a MBGW branching process is called simple if its generating function $\bo{f}$ is such that for all $m\in [d]$, $f_m$ is linear
in each coordinate with no constant term, i.e.
\[
f_m(\bo{r})=p_m(\bo e_1)r_1+\cdots+  p_m(\bo e_d)r_d, \quad \textrm{for}\quad \bo{r}\in [0,1]^d.
\]
In other words, each individual has exactly one offspring possibly of different type and thus the process has a constant number of individuals.

For our purposes, we further require  that each type has a positive probability of producing offspring of every other type.  We express this assumption in terms of the so-called {\it mean matrix} of $\bo Z$. More precisely, we define the mean matrix  $\bo{M}:=(m_{ij})_{i,j\in [d]}$, where $m_{ij}$ denotes the expected number of type $j$ offspring produced by an individual of type $i$, namely
\begin{equation}\label{meanmatrix}
m_{ij}:=\E_{i}\left[L^{(j)}\right]=\sum_{\bm \ell\in \mathbb{Z}^d_+} \ell_jp_i(\bm{\ell}).
\end{equation}
We say that the process $\bo Z$ is {\it irreducible} if its mean matrix $\bo M$ satisfies $m_{ij}^{(n)}>0$, for some $n$ and for all $i,j\in [d]$, where $m^{(n)}_{ij}$ is the $(i,j)$-th entry of the matrix ${\bo M}^n$.

 From now on, we assume that 
 \begin{equation}\label{hyp1}
 \tag{\bf{H}} \textrm{$\bo Z$ is non-simple,  conservative, and irreducible.}
  \end{equation}
  The mean matrix  plays a central role in the analysis of the  long-term behaviour of the process when all its entries are finite. In this case
\[
m_{ij}=\frac{\partial f_i}{\partial r_j}\paren{\vec{1}}<\infty,
\]
where $\vec{1}:=(1, \ldots, 1)\in \mathbb{Z}_+^d$. The above  condition  is sufficient to ensure that the process $\bo Z$ is conservative. If in addition, $\bo Z$ is irreducible,  the matrix $\bo{C}:=\textrm{diag}(\bm \alpha)(\bo M-\bo I)$ is well defined and  irreducible, where $\bo I$ denotes the identity matrix. An extension of the Perron-Frobenius theorem (see for instance, Theorem 2.5 in Seneta \cite{MR389944} ) implies that $\bo{C}$ admits a real eigenvalue $\rho$ strictly larger than the real part of any other eigenvalue.  The eigenvalue $\rho$ plays the role of the Malthusian parameter in the single-type case, leading  to  the classical classification of branching processes, i.e. the MBGW  process $\bo{Z}$ with law $\mathbb{P}_{\bo z}$ is {\it subcritical, critical} or {\it supercritical} according  as $\rho<0$, $\rho=0$ or $\rho>0$. \\

We emphasise that all subsequent results rely solely on Assumption \eqref{hyp1}, without requiring finiteness  of the mean matrix.\\

In the single-type case, continuous-time BGW trees are endowed with a natural notion of genealogy. Indeed,  each particle living at some time $t$ had a unique ancestor particle living at each earlier time $s < t$. It is then natural to ask questions about the shared genealogy of different particles  in the population alive at a certain time. 
Specifically, conditioning on the event  that there are at least $k$ particles alive at a time $T > 0$, consider picking $k$ 
particles uniformly at random without replacement
from the population alive at time $T$. 
Label these $k$ sampled particles with the integers $1$ through $k$.  

Recalling some standard terminology,  a {\it block} is a subset $B\subseteq \mathbb{N}$. Hereafter, the block formed by the $k$ first integers $[k]=\{1,\dots,k\}$ will play a special role. 
 A {\it partition} of the block $B\subseteq \mathbb{N}$ is a countable collection 
 $A=\{A_i, i\in \mathbb{N}\}$ 
 of pairwise disjoint blocks  such that $\cup_{i\in \mathbb{N}}A_i=B$.

We may associate with the sample of $k$ labelled particles a stochastic process $\pi^{(k,T)} := (\pi^{(k,T)}_t)_{t \in [0,T]}$ taking values in the collection of set partitions of $[k]$, also known as ancestral process,  by declaring:
\[
\text{$i$ and $j$ in the same block of $\pi^{(k,T)}_t$} \iff \text{$i$ and $j$ are descended from the same time $t$ ancestor},
\]
or in other words, that the time $T$ particle labelled with $i \in [k]$ and the time $T$ particle labelled with $j \in [k]$ share the same unique ancestor in the time $t$ population. 
This ancestral process construction is also seen in, for example, Bertoin and Le Gall \cite{MR1771663} and Johnston \cite{MR4003147}.

Since the entire process begins with a single particle at time $0$, it follows that each of the $k$ particles share the same 
initial ancestor, i.e. $\pi^{(k,T)}_0 = \{[k]\}$, Conversely, since we choose uniformly without replacement, each of the particles are distinct at time $T$, hence $\pi^{(k,T)}_T = \{\{1\},\ldots,\{k\} \}$ is the partition of $[k]$ into singletons. More generally, as $t$ increases across $[0,T]$, the stochastic process $\pi^{(k,T)}$ takes a range of values in the partitions of $[k]$, with the property that the constituent blocks of the process break apart as time passes. With this picture in mind, we define the \emph{split times} 
\begin{align*}
\tau_1 < \cdots < \tau_m
\end{align*}
to be the times of discontinuity of $\pi^{(k,T)}$. That is, at each time $\tau_i$, a block of $\pi^{(k,T)}_{\tau_i -}$ breaks into several smaller blocks in $\pi^{(k,T)}_{\tau_i}$.  We note that $\pi^{(k,T)} $ is almost-surely right continuous.

Recently, substantial progress has been made in the  study of  the process $\pi^{(k,T)}$ for various classes of continuous-time BGW trees, see for instance \cite{MR4003147,MR4133376,MR4674065,MR4718398}. Related developments for discrete-time  BGW trees in varying environments have been obtained by  Boenkost et al. \cite{boenkost2024genealogynearlycriticalbranching} and Harris et al. \cite{MR4840489}. In particular, Harris, Johnston and Roberts \cite{MR4133376} analysed  the  asymptotic behaviour of  $\pi^{(k,T)}$, as $T\to \infty$, in the so-called 
near critical regime, where the mean of the offspring distribution converges to one while the variance remains finite. This framework includes the classical  critical case, in which the offspring distribution $(p_i)_{i\ge 0}$ satisfies  $\sum_{i \geq 0 } ip_i = 1$ and  $\sum_{i \geq 0} i(i-1)p_i<\infty$.

Importantly, the same limiting genealogy in the critical regime (with finite variance)
 also arises for BGW trees in varying environments, as shown in
\cite{boenkost2024genealogynearlycriticalbranching} and \cite{MR4840489}. Moreover, the limiting genealogies of subcritical and supercritical  continuous time BGW trees with finite variance offspring distribution are investigated in \cite{MR4003147}.  Finally, the finite-variance assumption is not necessary for the existence of a limiting genealogy in the critical setting. Indeed, for continuous-time BGW trees with heavy-tailed offspring distributions, where multiple mergers appear in the limiting genealogy, we refer the reader to  \cite{MR4718398}.

\subsection{Main result} Our aim is to  study the multitype analogue of the ancestral process $\pi^{(k,T)}$ defined above  for fixed $T$, under the sole standing assumption that the associated multitype BGW process  satisfies \eqref{hyp1}. The results established here constitute a foundational step forward for  a forthcoming analysis of the limiting genealogy of the ancestral process in regimes where the mean matrix $\bo{M}$ is finite  and the process is  subcritical, critical, or supercritical.

Our broader objective is to develop a comprehensive description of the genealogical limits across all these regimes. In the critical case with finite-variance (see \cite{AHP-p2})  we  go beyond the uniform sampling scheme and also consider two additional \emph{sampling mechanisms} that depend explicitly on the type configuration. These extensions highlight the flexibility of the multitype framework and illustrate how type-dependent effects influence the resulting genealogical structures.

The main results of the present paper will serve as key inputs for future work on the subcritical and supercritical regimes (see \cite{AHP-p3}). A separate study will address the critical regime with heavy-tailed offspring distributions, where novel genealogical phenomena, such as multiple merger structures, are expected to emerge (see \cite{AHP-p4}).\\

To state our main results, we introduce some notation and the notion of integer partitions that incorporate type information, which we refer to as  {\it coloured partitions}. 
 A {coloured partition} is  a partition whose blocks are endowed with colours (or types). 
 Formally, a coloured partition of $[k]$ is a collection $\bo{P}=\{P_1,\ldots, P_d\}$,  where for each type $m\in [d]$, $P_m$ consists of a family of disjoint blocks  $A_{m,1}, A_{m,2},\ldots,A_{m,g_m}$, each color 
 $m$ and with respective   sizes $a_{m,1},\ldots, a_{m,g_m}$. Here $ g_m\ge 0$ denotes the number of blocks (or groups) of colour $m$. These blocks form a partition of $[k]$, so that
\[
\sum_{m\in [d]} \sum_{q\in[g_m]} a_{m,q}=k.
\]
For a block $A_{m,q}$ in  $P_m$, any two elements $h_1,h_2\in A_{m,q}$ satisfy the equivalence relation
\[
\textrm{ $h_1\sim h_2$ if the marks $h_1$ and $h_2$ follow the same individual of type $m$. }
 \]
Thus,  the total number of elements of $[k]$ belonging to blocks of type $m$ is 
\begin{equation}\label{prom}
\overline{a}_m:=a_{m,1}+\cdots +a_{m,g_m}=\sum_{q\in[g_m]} a_{m,q}.
\end{equation}
 Figure \ref{figNotationFirstSplittingTimePartition} illustrates an  explicit example of a coloured partition embedded in a MBGW tree with marks.

\begin{figure}
\includegraphics[width=.6\textwidth]{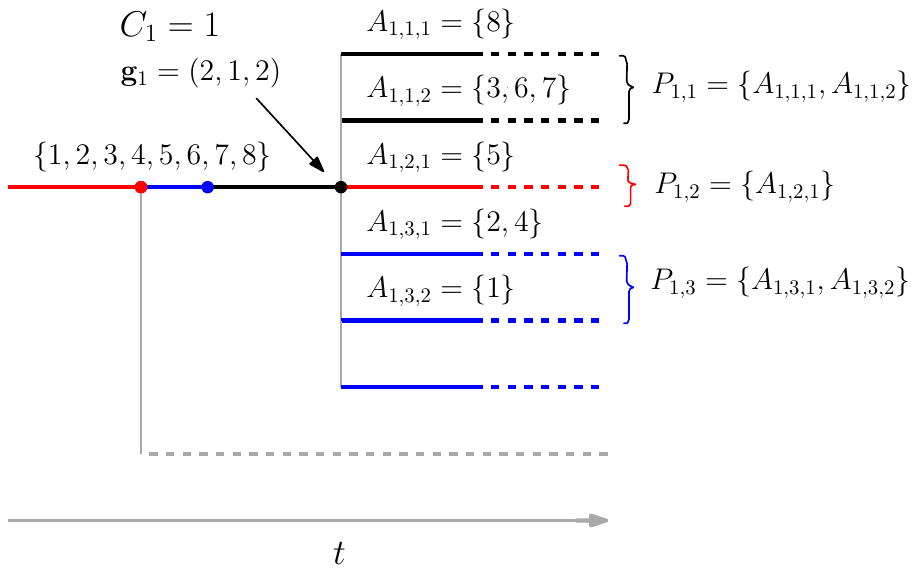}
\caption{Example of a coloured partition after a splitting event with $k=8$ marks, and three types $1,2,3$ represented by colours Black, Red, Blue, respectively. 
The vertex  carrying all marks  at time $t-$, has type 1 (Black) and offspring types $\bm \ell=(2,1,3)$ (that is, 2 Black, 1 Red, and 3 Blue). 
After the splitting event, the coloured partition formed is $\bo{P}=(P_1,P_2,P_3)$. 
In this case there are $g_1=2$ blocks of type 1, namely  $A_{1,1}=\{8\}$ and $A_{1,2}=\{3,6,7\}$, 
with sizes $a_{1,1}=1$ and $a_{1,2}=3$, respectively. Moreover, there are $g_2=1$ blocks of type 2, namely  $A_{2,1}=\{5\}$, of size $a_{2,1}=1$.
There are $g_3=2$ blocks of type 3, which are $A_{3,1}=\{2,4\}$ and $A_{3,2}=\{1\}$ of respective sizes $a_{3,1}=2$ and $a_{3,2}=1$. Finally, the number of marks following type 1, 2 or 3 individuals are $\overline a_1=4,\ \overline a_2=1$ and $\overline a_3=3$. 
}\label{figNotationFirstSplittingTimePartition}
\end{figure}

As in the single-type BGW case, we wish to understand the genealogical structure induced by a uniform sample at a fixed time $T$. We sample $k$ particles from the population alive at time $T$ according to the following procedure. Conditionally on the event  $\{N_T\ge k\}$, where $N_T=\sum_{m=1}^d Z^{(m)}_T$, we select a given sample of $k$ distinct individuals with probability 
\[
\frac{1}{N_T(N_T-1)\cdots(N_T-k+1)}.
\]

 Label the sampled particles with the integers $1,\ldots,k$.  To this sample we associate  a stochastic process taking values in the space of coloured partitions of $[k]$,  
 \[
 \pi^{(d,k,  T, u)} := (\pi^{(d,k,T, u)}_t)_{t \in [0,T]}.
 \]
  This ancestral process is defined as follows:
  \begin{center}
$i$ and $j$ are in the same block of $\pi^{(d,k, T, u)}_t$ with colour $m$ if and only if they descend from the same ancestor of type $m$ at time $t$.
\end{center}
Equivalently, the particles labelled $i$ and $j$ at time $T$ share a unique common ancestor of type  $m$ at time $t$.

Similarly to the single-type case,  all $k$ sampled particles share the same initial ancestor. If the root of the MBGW tree  has type $r\in [d]$, the initial state of the process is 
\[
\pi^{(d,k,T, u)}_0 =\overline{[k]}_{r}:=\{\emptyset,\ldots,\emptyset, [k],\emptyset, \ldots, \emptyset\}
\]
where the block $[k]$ is in the position corresponding to  type $r$,  and all other entries are empty sets, $\emptyset:=\{\}$. This represents a single block with a unique colour corresponding to the common ancestor at time zero.

 Since sampling is performed without replacement, the terminal state of the process is the discrete coloured partition into singletons, 
 $\pi^{(d,k,T, u)}_T =\{S_m, m\in [d]\}$, where $S_{m}$ denotes the collection of singleton blocks corresponding to particles of type $m$.
  As time evolves over  $[0,T]$, the process $\pi^{(d,k,T, u)}$ moves through   the space of coloured partitions of $[k]$. Blocks may either split into two or more (possibly differently coloured) sub-blocks, or change colour without splitting.

Let $M$ denote the number of splitting events required to decompose  the initial block $[k]$ into singletons, and let
$$
0=\tau_0<\tau_1 < \cdots < \tau_M,
$$
be the corresponding splitting times. These are the times at which the process  $\pi^{(d,k,T, u)}$ experiences  a discontinuity due to an actual block splitting.

In contrast to the single-type setting, the multitype framework introduces an additional source of discontinuity: colour changes within blocks. That is, even when the underlying partition structure remains unchanged, a change in the type of one or more elements results in a discontinuity of the coloured partition process. The process  $\pi^{(d,k, T, u)}$ is therefore almost surely right-continuous, with jumps arising from either block splittings or colour changes.

To state the main result of this paper, we introduce additional notation.   The {\it topology} $\mathcal{T}$  of $\pi^{(d,k, T, u)}$ is defined as the sequence 
\[
\mathcal{T}:=(\mathcal{T}_0,\cdots, \mathcal{T}_{M}) \qquad \textrm{with}\qquad \mathcal{T}_{h}=\pi^{(d,k, T, u)}_{\tau_h}.
\]
Intuitively, one may view this construction as encoding a multitype tree with edge lengths and $k$ marked leaves. Each branch undergoes  colour changes, recorded through the colours of the blocks in  
 $\pi^{(d,k, T, u)}$.  For each $h = 0, \ldots, M$, the partition $\mathcal{T}_h$ captures both the block structure and the type of each individual alive at time $\tau_h$. For each $h\in [M]$ and $m\in [d]$, let  $G_{h,m}$ denote the number of new blocks of  type $m$ created at time $\tau_h$, and write  $\bo G_{h}=(G_{h,1},\ldots, G_{h,d})$.  Thus, the  sequence of coloured partitions $\{\mathcal{T}_h\}_{h = 0}^{M}$ satisfies the following properties:
\begin{itemize}
\item $\mathcal{T}_0$ is the trivial coloured partition consisting of a single block,
\item $\mathcal{T}_{M}$ is the discrete coloured partition consisting of singletons, and
\item for each $h = 0, \ldots, M - 1$, the coloured partition $\mathcal{T}_{h+1}$ 
consists of 
\begin{itemize}
\item the coloured subpartitions of $\mathcal{T}_h$ that did not split (updated with their current colours at time $\tau_{h+1}$), and
\item the newly created coloured subpartitions, determined by the vector $\bo G_{h+1}$, at time $\tau_{h+1}$.
\end{itemize}
\end{itemize}

 From $\{\mathcal{T}_h\}_{h = 0}^{M}$,  we extract the {\it ancestral coloured subsequence}
 \[
 \mathcal{P}:=({\mathcal{P}}_0, \ldots, \mathcal{P}_M),
 \]
 which contains only those blocks whose size changes at each splitting event. We set $\mathcal{P}_0=\mathcal{T}_0$ and for $h\ge 1$, $\mathcal{P}_h$ consists precisely of the newly created coloured subpartition at time $\tau_h$.  See Figure \ref{figNotationColouredPartitionSplittingPartitionProcessV2} for an illustration.
	\begin{figure}
\includegraphics[width=.6\textwidth]{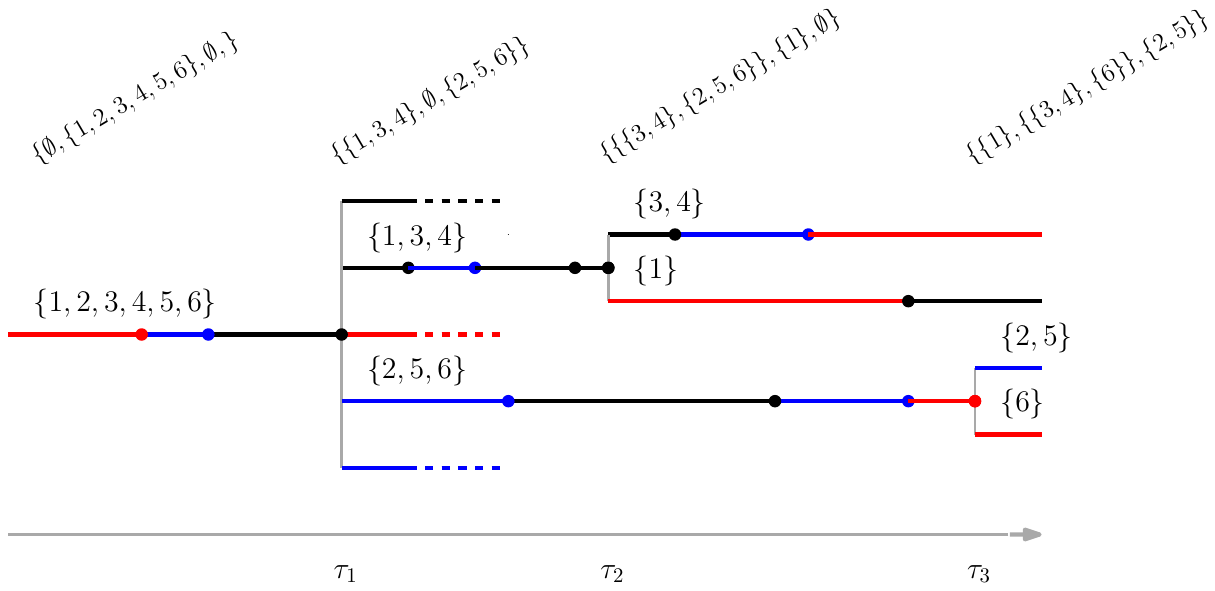}\caption{3-type MBGW tree with 6 spines starting with one individual of  type 2 and its coloured partition process. Individuals of type 1 are depicted with color Black, type 2 with color Red, and type 3 with color Blue. 
At time zero, the coloured partition process takes the value $\overline{[6]}_2$, since the unique partition $[6]$ follows an individual type 2. 
We depict this in the three lines above at time 0. 
Just after time $\tau_1$, the coloured partition process takes the value $\mathcal{T}_{1}=\{\{1,3,4\}\},\emptyset,\{2,5,6\}\}$, since the partition $\{1,3,4\}$ follows a type one individual, and the partition $\{2,5,6\}$ follows a type 3.
Finally, just after time $\tau_2$, the coloured partition process takes the value $\mathcal{T}_{2}=\{P_{2,1},P_{2,2},\emptyset\}$ where $P_{2,1}=\{\{1\} \}$ and $P_{2,2}=\{\{2,5,6\},\{3\},\{4\} \}$. 
Note that in this case, the ancestral coloured subsequence is $\mathcal{P}_0=\{\emptyset,[6],\emptyset\}$, ${\mathcal{P}}_1=(\{1,3,4\},\emptyset,\{2,5,6\})$, ${\mathcal{P}}_2=\{\{3,4\},\{1\},\emptyset\}$, and $\mathcal{P}_3=\{\emptyset,\{6\},\{2,5\}\}$. 
 }\label{figNotationColouredPartitionSplittingPartitionProcessV2}	
		\end{figure}

The tree topology of a splitting coloured sequence is obtained by discarding colour information. That is, from  $(\mathcal{T}_0,\cdots, \mathcal{T}_{M})$ we derive the corresponding sequence of uncoloured partitions $(\Xi_0, \ldots, \Xi_{M})$ which encodes the hierarchical splitting structure independently of the types.

We now turn to the main objective of this work, namely the description of the joint distribution of all spine splitting events occurring up to a fixed time horizon $T$. Our goal is to characterise, in a unified framework, the full collection of random objects generated by these events: the splitting times, the types of the individuals involved, the offspring configurations produced at each splitting, and the evolution of the induced ancestral coloured subsequence of the label set $[k]$. In particular, we keep track of how the labels of a uniform sample of size  $k$  are redistributed among descendants through successive spine splittings.

To make this precise, we fix $n\leq k-1$ and  assume that exactly $n$ spine splitting events occur before time $T$, that is,   $M=n$ and $0<\tau_1<\cdots <\tau_n<T$. At each splitting time $\tau_h$, $h\in [n]$, a single individual on the spine -- denoted by the label $v(h)$ -- gives birth to $\bo L_{v(h)}$ new offspring. We denote by  $C_{\tau_h}$  the type (or colour) of the spine individual $v(h)$ involved in the $h$-th splitting event. 
This reproduction event induces a redistribution of a subset of the $k$ sampled marks among the offspring, which we encode by $\mathcal P_{\tau_h}$. 
 
 The sequence of spine splitting events thus generates an ancestral coloured subsequence $\mathcal P=(\mathcal P_{\tau_h})_{h\in [n]}$ of $[k]$, which records the genealogical evolution of the sampled lineages along the spine. For notational convenience, we write $ C_h:=C_{\tau_h}$ and $\mathcal P_h:=\mathcal P_{\tau_h}$. Each element $\mathcal P_h$ takes values of the form 
 \[
 \bo P_h=(P_{h,1},\ldots, P_{h,d}),
 \]
  where   $P_{h,m}$ corresponds to offspring of type $m\in[d]$. More precisely $P_{h,m}=\{A_{h,m,q}\}_{q\in [G_{h,m}] }$, 
is a family of $G_{h,m}$ disjoint blocks, each block representing a group of marks that follow the same descendant of type $m$.  This coloured partition therefore simultaneously encodes the offspring structure at the splitting time and the induced redistribution of the sampled lineages.

 We now formalise the event of interest.  Fix $n\leq k-1$,  the times $0<t_1<t_2<\cdots <t_n$, and for each $h\in [n]$ and offspring configuration
 \[
 {\bo g}_{h}:=(g_{h,1},\ldots, g_{h,d})\leq \bm \ell_{h}:=(\ell_{h,1},\ldots, \ell_{h,d})\in \mathbb{Z}^d_+,
 \]
 together with the partition  $\bo P_h$ with block counts ${\bo g}_{h}$ and a type $i_h\in [d]$. We  define the event
\begin{equation}\label{deltatn}
\Delta_T(n):=\bigcap_{h\in [n]}\left\{\tau_h\in {\rm d}t_h,\mathcal{P}_{h}=\bo{P}_h,\bo{L}_{v(h)}=\bm{\ell}_h , C_{h}=i_h, M=n \right\}. 
\end{equation}

Moreover, if $A_{h,m,q}$ is a block of $\mathcal{P}_h$, we denote by  $k_{v(h, m,q)}:=\textrm{card}\{A_{h,m,q}\}$ the number of marks associated with the descendant of $v(h)$ corresponding to the block $A_{h,m,q}$ and having type $m$. We denote this descendant  by $v(h,m,q)$.
We refer to Figure \ref{figNotationJointSplittingTimes} for an illustration of this notation.

\begin{center}
	\begin{figure}
		\includegraphics[width=.4\textwidth]{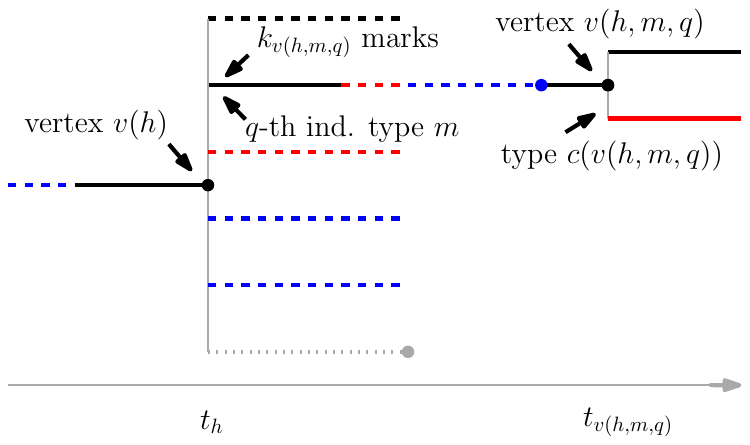}\caption{We show a small window of a multitype tree around a vertex $v(h)$ involved in a spine splitting event at time $t_h$. 
Such a vertex has $L_{v(h)}$ offspring. We represent in gray vertices being born that do not carry marks after time $t_h$. We follow the subtree generated by the $q$-th vertex type $m$ carrying $k_{v(h,m,q)}$ marks (in the picture $q=2$ and $m=1$). Such  vertex, at time $t_{v(h,m,q)}$ is denoted by $v(h,m,q)$, has type $c(v(h,m,q))$ and undergoes a spine splitting event.
}\label{figNotationJointSplittingTimes}
	\end{figure}
\end{center} 

Recall that  sampling $k$ individuals uniformly without replacement at time $T$, conditionally on the event $\{N_T\ge k\}$
, induces a probability measure that we denote by $ \mathbb{P}^{(k)}_{unif,T,r}$, as  noted for instance in \cite{MR4718398, MR4133376}. This measure acts on   measurable functionals of the genealogies of $k$-tuples of particles.

Let $f$ be a functional of the (Ulam-Harris labelling of) the ancestors of the $k$ particles, their birth times,
death times, types, and the number of offspring they have upon death.
Let $\bm{\varsigma}_T=(\varsigma^{(1)}_T, \ldots,\varsigma^{(k)}_T )$ denote a uniform sample without replacement at time $T$ taken from a MBGW process started from a single  individual of type $r$, conditioned on the event $\{N_T\ge k\}$. We define  the probability measure $ \mathbb{P}^{(k)}_{unif,T,r}$ on $\{N_T\ge k\}$ as follows
\begin{equation}\label{probuniIntro}
 \mathbb{E}^{(k)}_{unif,T,r}\left[f( \bm{\varsigma}_T)\right]=\mathbb{E}_r\left[\frac{1}{N_T(N_T-1)\cdots(N_T-k+1)}\sum_{\bo{v}\in \mathcal{N}^{(k)}_T} f(\bo{v})\Bigg| N_T\ge k\right]
\end{equation}
where $\mathcal{N}_{T}^{(k)}$ is the set of all possible $k$-tuples of particles which are alive at time $T$. The first term of the right hand side of  \eqref{probuniIntro} is the probability for any given choice of distinct $\bo{v}\in \mathcal{N}^{(k)}_T.$ 

In Section \ref{defipunif} we provide a formal definition of $ \mathbb{P}^{(k)}_{unif,T,r}$ and show that it is a uniform choice without replacement from all particles alive at time $T$. Under this law, the following theorem provides an explicit expression for the joint probability of observing a prescribed spine splitting history, describing the complete genealogy of the sample.

For later use, we introduce the  notation $\delta_{i, m}=1$ if $m=i$, and 0 otherwise.

\begin{teo}\label{teofsplitting1}
For any $k\geq 1$, $T\in \re_+$,  $r\in [d]$, we have
\begin{equation}\label{eqnfsplitting2}
	\begin{split}
\mathbb{P}^{(k)}_{unif,T,r}&\paren{\Delta_T(n)}\\
&= \frac{1}{(k-1)!}\frac{1}{\mathbb{P}_{r}\paren{\ N_T\geq k}}\int_0^\infty(e^\phi-1)^{k-1}\Q^{(k),\phi\vec{1}}_{T,r}(\Delta_T(n))\E_{r}\left[N^{\floor{k}}_Te^{-\phi\vec{1}\cdot \bo Z_T}\right]{\rm d}\phi,
\end{split}
\end{equation}
where
\[
\begin{split}
\Q^{(k),\phi\vec{1}}_{T,r}(\Delta_T(n))&= \prod_{h=1}^n\prod_{m=1}^d\E_{m}\left[e^{-\phi\vec{1} \cdot \bo Z_{T-t_h}}\right]^{\ell_{h,m} -g_{h,m}}p_{i_h}(\bm \ell_{h})\bm \ell_{h}^{\floor{\bo g_h}}\\
		&\hspace{-1.5cm}  \times 
		\prod_{h=0}^{n-1}\prod_{m=1}^d\prod_{\substack{ q\leq g_{h,m}:\\ k_{v(h,m,q)}\geq 2}}\E_{m}\left[Z^{(c(v(h,m,q)))}_{t_{v(h,m,q)}-t_h} \prod_{j=1}^d\E_{j}\left[e^{-\phi\vec{1}\cdot \bo Z_{T-t_{v(h,m,q)}}}\right]^{ Z^{(j)}_{t_{v(h,m,q)}-t_h}-\delta_{c(v(h,m,q)),j}}\right]\\
	& \hspace{1.5cm} \times
	\frac{\prod_{h=1}^{n}\prod_{m=1}^{d}\E_{m}\left[N_{T-t_{h}}e^{-\phi\vec{1} \cdot \bo Z_{T-t_{h}}}\right]
		^{\#\{ q\leq g_{h,m}:k_{v(h,m,q)}=1\}}
	}{\E_{r}\left[N_T(N_T-1)\cdots(N_T-k+1)e^{-\phi\vec{1}\cdot \bo Z_{T}}\right]} \prod_{h=1}^n\alpha_{i_{h}} {\rm d}t_h,
	\end{split}
\]
 $t_{v(h,m,q)}$ and $c(v(h,m,q))$ are the spine splitting times  and type associated to the vertex $v(h,m,q)$.  When  $h=0$, we observe that $v(0)=\emptyset$, $g_{0,m}=\delta_{r,m}$ and $v(0, r, 1)=v(1)$. 
\end{teo}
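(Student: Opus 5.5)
The plan is to prove the formula by writing the uniform sampling weight as an integral against $\phi$, and then identifying the integrand through a $k$-spine change of measure. The starting point is the elementary identity, valid for $N\ge k$,
\[
\frac{1}{N(N-1)\cdots(N-k+1)}=\frac{1}{(k-1)!}\int_0^\infty (e^{\phi}-1)^{k-1}e^{-\phi N}\,{\rm d}\phi .
\]
To check it, substitute $x=e^{-\phi}$; the right-hand side becomes a Beta integral $\int_0^1(1-x)^{k-1}x^{N-k}{\rm d}x=B(k,N-k+1)$. Insert this into \eqref{probuniIntro} and apply Fubini, which is justified since everything is nonnegative. This gives
\[
\mathbb{P}^{(k)}_{unif,T,r}(\Delta_T(n))=\frac{1}{(k-1)!\,\mathbb{P}_r(N_T\ge k)}\int_0^\infty(e^\phi-1)^{k-1}\E_r\Big[e^{-\phi\vec 1\cdot\bo Z_T}\sum_{\bo v\in\mathcal N_T^{(k)}}\mathbf 1_{\Delta_T(n)}(\bo v)\Big]{\rm d}\phi .
\]
Now define $\Q^{(k),\phi\vec 1}_{T,r}$ as the measure on trees with $k$ distinguished time-$T$ particles (spines) whose density with respect to $\mathbb P_r\times$counting measure on $\mathcal N_T^{(k)}$ is $e^{-\phi\vec 1\cdot \bo Z_T}/\E_r[N_T^{\floor k}e^{-\phi\vec1\cdot\bo Z_T}]$. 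With this definition the displayed expectation equals $\E_r[N_T^{\floor k}e^{-\phi\vec 1\cdot\bo Z_T}]\,\Q^{(k),\phi\vec1}_{T,r}(\Delta_T(n))$, which is exactly \eqref{eqnfsplitting2}. So the whole content of the theorem reduces to computing $\Q^{(k),\phi\vec 1}_{T,r}(\Delta_T(n))$ explicitly.

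For that computation, I expand $\E_r[e^{-\phi\vec1\cdot \bo Z_T}\sum_{\bo v}\mathbf 1_{\Delta_T(n)}]$ by decomposing along the genealogical tree spanned by the $k$ marks. Since the weight $e^{-\phi\vec1\cdot\bo Z_T}=\prod_{u\in\mathcal N_T}e^{-\phi}$ is multiplicative, the branching property factorises the expectation over independent subtrees. There are three kinds of factor.

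\emph{Subtrees carrying no marks.} A child of type $m$ born at $t_h$ that carries no mark contributes $\E_m[e^{-\phi\vec 1\cdot\bo Z_{T-t_h}}]$. The $\ell_{h,m}-g_{h,m}$ such children of type $m$ give the first product.

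\emph{Split events.} At each split a spine vertex of type $i_h$ dies at rate $\alpha_{i_h}$ and produces $\bm\ell_h$ with probability $p_{i_h}(\bm\ell_h)$. Choosing which children receive the ordered blocks of $\bo P_h$ gives $\bm\ell_h^{\floor{\bo g_h}}=\prod_m \ell_{h,m}(\ell_{h,m}-1)\cdots(\ell_{h,m}-g_{h,m}+1)$ ways, since the labelled blocks must go to distinct children. Together these produce $p_{i_h}(\bm\ell_h)\bm\ell_h^{\floor{\bo g_h}}\alpha_{i_h}{\rm d}t_h$.

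\emph{Segments between splits.} A block of size $\ge2$ started by a child of type $m$ at time $t_h$ must next split at time $t_{v(h,m,q)}$ through a vertex of type $c(v(h,m,q))$. Apply the many-to-one formula, i.e. a sum over the $Z^{(c)}_{t_{v}-t_h}$ individuals alive just before that split, combined with the Markov property at time $t_v$. The chosen individual is the one that splits (its death rate and offspring are counted in the split-event factor), and every other individual of type $j$ contributes $\E_j[e^{-\phi\vec1\cdot \bo Z_{T-t_v}}]$. This gives the middle product with exponent $Z^{(j)}-\delta_{c,j}$. A block of size $1$ never splits again, so its subtree contributes $\E_m[N_{T-t_h}e^{-\phi\vec 1\cdot\bo Z_{T-t_h}}]$: one choice among the time-$T$ descendants, with the discount applied to all of them.

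Multiplying these factors and dividing by the normalisation yields the stated formula for $\Q^{(k),\phi\vec1}_{T,r}(\Delta_T(n))$.

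I expect the main obstacle to be the segment factor between splits, namely justifying rigorously that the joint law of ``no split in $(t_h,t_v)$, then a split at $t_v$ by a type-$c$ vertex'' produces exactly the expectation with $Z^{(c)}_{t_v-t_h}$ weighting. This requires a careful stopping-line or Markov argument at the random time $t_v$, with the vertex's death treated as an infinitesimal event of rate $\alpha_c$. I would handle it by a Palm-type (many-to-one) formula for the point process of death events of type-$c$ individuals. I would also need to check that the Ulam–Harris ordering and the labelling of marks give no extra combinatorial factor beyond $\bm\ell_h^{\floor{\bo g_h}}$. Conservativeness from (\textbf{H}) ensures that all quantities are finite.
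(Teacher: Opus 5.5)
Your proof is correct, and it reaches the explicit formula by a different route from the paper.

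\textbf{The paper's route.} The paper never expands $\E_r\big[e^{-\phi N_T}\sum_{\bo v}\mathbf 1_{\Delta_T(n)}(\bo v)\big]$ directly. It builds $\Q^{(k),\bm\theta}_{T,r}$ from the $k$-spine reference measure $\p^{(k)}_r$ through the weights $g_{k,T}$, and shows the spines form a uniform sample under it, \eqref{eqnQIsUniformGivenF0}. It proves the Markov branching property under $\Q$ (Lemma~\ref{lemmaMrkovPropertyUnderQ}). It then computes the law of the first split (Proposition~\ref{propoFirstSplittingEventWithPartitionChildAndType}) by decomposing $g_{k,T}$ and $\bo Z_T$, summing over $\mathcal N^{(i)}_{t-}$ and using an $\epsilon$-argument. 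The joint law follows by induction on $n$ (Proposition~\ref{propoJointSplittingEventWithPartitionChildAndType}). Separately, it passes from $\p^{(k)}_{unif,T,r}$ to $\Q$ via Lemma~\ref{lemmaFromTheMeasureQToTheMeasureEunif}, the Beta integral and a change of discount parameter (Proposition~\ref{propofsplitting1}).

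\textbf{Your route.} You define $\Q$ directly as the $N_T^{\floor k}e^{-\phi N_T}$-biased tree with a uniform $k$-sample. This agrees with the paper's measure on $\F_T\vee\sigma(\bm\varsigma_T)$ by \eqref{defMeasureQkTrGivenTopologicalInfo} and \eqref{eqnQIsUniformGivenF0}, and $\Delta_T(n)$ is measurable with respect to that $\sigma$-algebra, so the first identity becomes immediate. You then unfold the whole genealogy at once under $\p_r$, using only the multiplicativity of $e^{-\phi N_T}$, the factorisation of the sum over ordered tuples across subtrees, and the branching property. Your compensator (Palm) step for death events is exactly the many-to-one computation the paper performs inside Proposition~\ref{propoFirstSplittingEventWithPartitionChildAndType}, so the obstacle you anticipate is handled with the same rigour as the paper's $\epsilon$-limits.

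Your combinatorial concern is also resolved: $\bm\ell_h^{\floor{\bo g_h}}$ is precisely the number of injective assignments of the blocks, which are distinguishable by their content, to children of the matching type. In the paper the same factor appears as the product of \eqref{eqnProbabilityOfAGivenPartition} with $\widetilde g_{k,I}$, where the $\bm\xi$-dependence cancels.

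\textbf{What each route buys.} Yours is more elementary: it needs neither $\p^{(k)}_r$, $g_{k,T}$ nor the induction, and it makes transparent why $\bm\xi$ plays no role. The paper's heavier machinery additionally yields the forward construction (Proposition~\ref{forwconstr}) and the Markov branching property under $\Q$. These describe the dynamics, such as the rates of births off the spine and the laws of unmarked subtrees, which is what the later asymptotic work relies on.
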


This theorem provides a key structural ingredient for deriving the limiting genealogies analysed in \cite{AHP-p2,AHP-p3,AHP-p4} across the different regimes of the process. In particular, the results established above allow us, in \cite{AHP-p2}, to relate the additional sampling mechanisms introduced there to the uniform sampling scheme described in Theorem~\ref{teofsplitting1}. More precisely, we establish an explicit connection showing how these type-dependent sampling schemes can be represented in terms of the uniform sampling scheme. This relationship plays a central role in identifying the limiting genealogical structures in the critical regime with finite variance.

\subsection{Outline of proof}

To conclude, we briefly outline the strategy used to establish our main result. 
The central idea is to introduce a collection of distinguished lineages, or \emph{spines}, evolving within a continuous-time MBGW tree started from a single individual of type~$r$. 
Building on, and extending, the spine techniques developed by Harris et al.~\cite{MR4133376}, we construct a change of measure $\Q^{(k),\bm{\theta}}_{T,r}$ under which the spines are biased so that, at time~$T$, they form a uniform sample of $k$ distinct individuals from the population.

At the same time, the population is reweighted by the size-biased functional
\[
\bo z \longmapsto n(n-1)\cdots(n-k+1)\, \mathrm e^{-\bm{\theta}\cdot \bo z},
\]
where $\bo z=(n_1,\ldots,n_d)$ denotes the vector of type counts with total size 
$n=\sum_{m=1}^d n_m$. 
In the setting of Harris et al.~\cite{MR4133376}, this corresponds to the special case in which the vector parameter $\bm{\theta}$ reduces to the scalar value $\theta=0$.

The vector $\bm{\theta}$ plays the role of an exponential discounting parameter, regulating the growth of the tree and allowing the process to be interpreted as sampling from a $k$-fold size-biased multitype population, even in the absence of higher-order moment assumptions. 
Related exponential discounting techniques were developed for single type trees in~\cite{MR4718398}.

Under the change of measure $\Q^{(k),\bm{\theta}}_{T,r}$, the model admits a significantly simplified and more tractable description. 
While the formal definition of this measure is technical and deferred to~\eqref{defMeasureQkTrGivenAllInfo}, an intuitive understanding of the resulting dynamics is essential for the proof of Theorem~\ref{teofsplitting1}.

Specifically, under $\Q^{(k),\bm{\theta}}_{T,r}$, the Ulam--Harris labelled population $\mathcal N$ is augmented with $k$ distinguished spines. 
The underlying branching dynamics are otherwise unchanged: particles evolve exactly as under $\mathbb P_r$, except that some individuals may carry one or more spines, while particles without spines behave as in the original MBGW process. 
The spines are constrained to be distinct at time~$T$ and, crucially, are distributed at that time as a uniform sample without replacement from the population alive at~$T$. 
Moreover, conditional on the genealogical structure of the tree up to time~$T$, the spines evolve independently. 
This conditional independence is the key structural feature that permits explicit computations under $\Q^{(k),\bm{\theta}}_{T,r}$.

Theorem~\ref{teofsplitting1} then provides a precise mechanism for transferring these computations back to the original measure $\mathbb P_r$, conditioned on the event $\{N_T\ge k\}$. 
In this way, the change of measure serves as a powerful analytical tool, yielding tractable expressions while preserving the genealogical information relevant to uniform sampling.

The remainder of the paper is devoted to the proof of the main result. 
We begin by introducing in Section \ref{subsectionChangeOfMEasure} the multiple spines framework  and the associated changes of measure that underpin our approach. 
Within this framework, in Section \ref{subsectionPropertiesOfQkUnderForwardConstruction} we establish the key structural properties of the spines under the measure $\Q^{(k),\bm{\theta}}_{T,r}$, including a forward construction (see Proposition \ref{forwconstr}) of the multitype branching tree. 
Building on these results, we derive in Sections \ref{subsectionJointLawSpineSplittingTimes} and \ref{subsectionJointLawOfSplittingTimesUnderPUnif} the joint law of the spine splitting times at fixed horizons under both the probability measure $\Q^{(k),\bm{\theta}}_{T,r}$ and the one induced by applying the sampling scheme to the original process. 
Together, these results provide the core probabilistic ingredients required for the analysis.

\subsection{Related works on genealogies of multitype branching processes}

Genealogical questions for multitype branching processes are considerably less developed than in the single-type setting. Nevertheless, a number of works address multitype genealogical structures under various assumptions, most often in large-time regimes and typically under conditioning on non-extinction.

An early contribution is due to Jagers and Nerman~\cite{MR1459475}, who analysed the genealogy of a single sampled individual in a multitype Markov branching process. 
By studying the backward evolution of ancestral types, they identified a governing Markov chain, known as the line of descent types, thereby laying the foundations for type-based genealogical analysis.

A more systematic treatment of genealogies in MBGW processes is due to J.-Y. Hong and coauthors; see, for example, \cite{MR2942128,MR3306444,MR3317481,MR3570095,MR3749363}. In Hong's thesis~\cite{MR2942128},  a general framework based on simple random sampling without replacement is introduced, in which ancestral lineages are traced backward in time until their coalescence. Within this setting, the distribution of the most recent common ancestor (MRCA), including its generation, type, and death time, is analysed for both discrete and continuous-time models and across all regimes. The subcritical and critical cases are further investigated in \cite{MR3570095}. In the critical case, under a   finite variance assumption,  a point process associated with the genealogical structure of the sample is studied, together with the distribution of the generation of the MRCA. In the subcritical case, assuming an $X\log X$-moment condition explicit expressions are obtained for the MRCA and for the joint law of its type and generation, as well as the types of the sampled individuals. Corresponding results for the supercritical case, also under an $X\log X$-moment condition, are obtained in~\cite{MR3317481}. Collectively, these works provide a detailed description of the MRCA for finite samples, resolving both its generation and its type.

Another related contribution is due to Popovic and Rivas~\cite{MR3264441}, who consider branching processes with infinitely many types. When the offspring distribution is linear fractional, they derive explicit laws  for coalescence times of pairs of individuals, including same-type coalescences, and propose an algorithmic construction of the ancestral tree of the standing population under quasi-stationarity via an associated Markov chain.

The work of Foutel-Rodier and Schertzer~\cite{MR4664584} is particularly close in spirit to the present paper. They study the asymptotic genealogy of finite samples taken at large times by means of a many-to-few formula and method of moments, within a general critical branching Markov framework that includes MBGW processes. Their results establish convergence toward coalescent-type limits described by the marked Brownian coalescing point process. A central contribution is the introduction of spinal probability measures under which $k$ individuals are sampled uniformly without replacement at large times, together with a size-biased change of measure that preserves the branching Markov property. 

We emphasise that their approach assumes  the existence of moments of order $k$ together with a suitable notion  of criticality, implemented through a harmonic Doob $h$-transform. Using the associated spinal decomposition, they characterise the pairwise distances between splitting times and the types of the sampled individuals. In contrast, we focus on MBGW processes and obtain explicit descriptions at fixed times $T$, without requiring  the process to be critical. This perspective allows us to capture the full structure of multiple mergers which is not visible under their approach. In particular, our results provide a complete description of the  genealogical structure of the spines, including splitting times, offspring configurations, and types.

\section{Spines and changes of measures}\label{Spines}

Throughout, we use $\mathbb{R}:= [0,\infty)$ and adopt the standard Ulam-Harris labelling system to encode the genealogical structure of particles.  We  recall that $\mathbf{Z}=(\mathbf{Z}_t)_{t\ge 0}$ is a continuous time $\mathbb{Z}^d_+$-valued BGW branching process with  probabilities $(\mathbb{P}_{\mathbf{x}})_{\mathbf{x}\in \mathbb{Z}_+^d}$ on  the filtered probability space $(\Omega, \mathcal{F}, (\mathcal{F}_t)_{t\ge 0})$.

\subsection{Change of measures}\label{subsectionChangeOfMEasure}

For any fixed $k\in \mathbb{N}$ and $r\in[d]$, we now define a measure $\p^{(k)}_{r}$ under which  the population process $\mathcal{N}=(\mathcal{N}_t)_{t\ge 0}$, with an initial ancestor of type $r$, has $k$ distinguished lines of descent, known as {\it spines}, similarly as in Harris et al. \cite{MR4133376}. The measure $\p^{(k)}_{r}$ will serve as a natural and convenient {\it reference measure} when looking at the behaviour of other population processes with $k$ distinguished particles.

We denote the $k$ spines   by  $\bm{\varsigma}=(\varsigma^{(1)},\ldots, \varsigma^{(k)})$, where $\varsigma^{(i)}$ corresponds to the distinguished line of descent of the $i$-th \emph{spine}. 
Each spine is represented by a sequence of Ulam-Harris labels $v_0 v_1 v_2 \dots$ which start at the initial ancestor, and where the next label in the sequence is always an offspring of the previous, i.e.\ $v_0=\emptyset$ and, for each $i\in\mathbb{Z}_+$, $v_{i+1}=v_i \ell$ for some $\ell\in\{1,\dots,\bo 1\cdot \bo L_{v_i}\}$,  where $\bo{L}_{u}=(L^{(1)}_u, \ldots, L^{(d)}_u)$ denotes the offspring of particle $u$ which has  distribution $\bo{p}$.  A spine may be an infinite line of descent, or a finite path which terminates at a leaf in the underlying genealogical tree of the population. 
If a particle $u$ has $j$ distinct spines passing though it (i.e. ${\rm card}\{i\in [k]: u\in\varsigma^{(i)}\}=j$), then we say that the particle $u$  is \emph{carrying $j$ spines}.

The process $\mathcal{N}$ with $k$ spines $\bm{\varsigma}$ under the measure $\mathbb{P}^{(k)}_r$ is constructed as an extension of $\mathbf{Z}$ under $\mathbb{P}_r$, in that, all particles behave exactly as in the original MBGW process but some particles are additionally identified as carrying the spines, as follows:

\begin{enumerate}
	\item Begin with one particle type $r$ carrying $k$ marks $\{1,\ldots, k\}$.
	\item Each mark follows a \emph{spine}.  
	\item A type $i$ particle carrying $q$ marks $m_1<\cdots <m_q$ at time $t$ branches at rate $\alpha_i>0$, dying and being replaced by a random number of particles $\mathbf{L}=(L^{(1)}, \ldots, L^{(d)})$ according to the probability 
	$p_i(\bm{\ell})=\mathbb{P}_i(\mathbf{L}=\bm{\ell})$, for $\bm{\ell}\in \mathbb{Z}^d_+$, $i\in [d]$,
	independently of the  rest of the system, just as under $\p_r$.
	\item Given that $\bm{\ell}=(\ell_1,\ldots,\ell_d)$ particles are born at a branching event, with $\ell_m$ particles of type $m$, say $w_1,\ldots, w_{\ell_m}$, each of the $q$ marks independently chooses to follow type $m$ with probability $\ell_m\xi_m /\bm{\ell}\cdot \bm{\xi}$, where $\bm{\xi}=(\xi_1,\ldots,\xi_d)$ is a probability distribution; conditional on this choice, it then  follows particle $w_{h}$ with probability $1/\ell_m$ for $h\in [\ell_m]$. 
\end{enumerate}

Under the assumption that the mean matrix has finite entries, the canonical choice for the probability distribution $\bm \xi$ is the normalised right eigenvector associated with the Perron root, that is, the real eigenvalue that is strictly larger than the real part of any other eigenvalue, as guaranteed by the Perron-Frobenius theorem. This will be the natural choice in \cite{AHP-p2}.

In Figure \ref{figTreeUnderP8}, we show an example of a MBGW tree under $\p^{(8)}_2$. 
\begin{center}
		\begin{figure}
			\includegraphics[width=.6\textwidth]{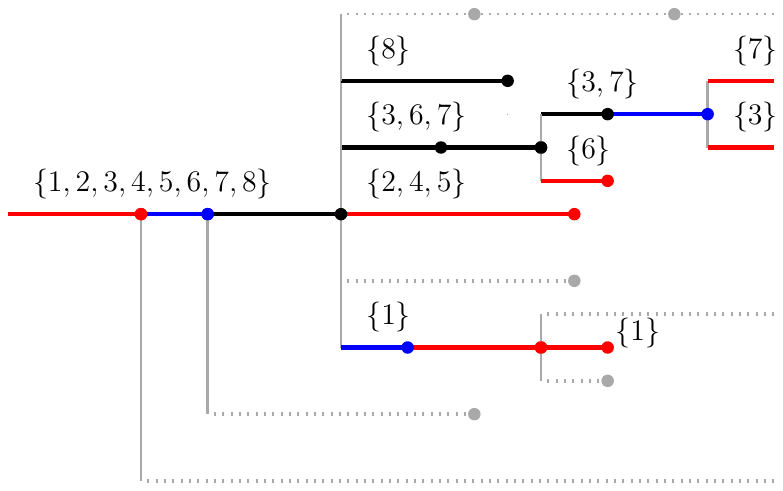}\caption{3-type MBGW tree under $\p^{(8)}_2$, where individuals type 1 are depicted with color Black, type 2 with color Red, and type 3 with color Blue. The time of death of a particle is represented by a dot of its color. The dotted lines represent those particles that carry no marks. }\label{figTreeUnderP8}	
		\end{figure}
\end{center}

Let $\F^{(k)}_t$ contain all the information about the system up to time $t$, including the information about the $k$ spines. We write $\bm{\varsigma}_t=(\varsigma_t^{(1)},\ldots, \varsigma_t^{(k)})$ to identify the  $k$ spines at time $t$, where $\varsigma_t^{(h)}$ is the label of the particle carrying spine $h$ at time $t$. For $h\in [k]$, we define 
\[
\textrm{spine}(\varsigma_t^{(h)})=\Big((u^{(h)}_1,c^{(h)}_{2}),(u^{(h)}_2,c^{(h)}_{3}),\ldots, (u^{(h)}_{m-1},c^{(h)}_{m}),(u^{(h)}_{m},c^{(h)}_{m+1})\Big)
\]
where $m$ is such that $\varsigma_t^{(h)}=u^{(h)}_m$,   be the spine generated by $\varsigma_t^{(h)}$ together with the color of the vertex that the $h$-th mark decided to follow.
That is, assume that the $g$-th branching event occurs by the individual $u^{(h)}_g$, which is an ancestor of $\varsigma_t^{(h)}$. 
After this branching event, the children $v$ of $u^{(h)}_g$ has type $c^{(h)}_{g+1}$ and is the ancestor of $\varsigma_t^{(h)}$ (we assume there are $m\geq g$ such branching events in the lifetime of the spine of $\varsigma_t^{(h)}$).  It is important to note that in the  pair $(u^{(h)}_g,c^{(h)}_{g+1})$, the particle and the color  are in different generations in the ancestral linage of $\varsigma_t^{(h)}$.

Assume that each vertex $u^{(h)}_{g}$ has children $\{\bo{L}_{h,g}=(\ell^{(1)}_{h,g},\ldots, \ell^{(d)}_{h,g})\}$. 
By definition, for each label $v$ we have
\begin{align*}
	\p^{(k)}_r\paren{\left.\varsigma_t^{(h)}=v\right|\ \F_t}&=\frac{\ell^{(c^{(h)}_{2})}_{h,1}\xi_{c^{(h)}_{2}}}{\bo{L}_{h,1}\cdot \bm{\xi}}\frac{1}{\ell^{(c^{(h)}_{2})}_{h,1}}\frac{\ell^{(c^{(h)}_{3})}_{h,2}\xi_{c^{(h)}_{3}}}{\bo{L}_{h,2}\cdot \bm \xi}\frac{1}{\ell^{(c^{(h)}_{3})}_{h,2}}\cdots \frac{\ell^{(c^{(h)}_{m})}_{h,m}\xi_{c^{(h)}_{m+1}}}{\bo{L}_{h,m}\cdot \bm \xi }\frac{1}{\ell^{(c^{(h)}_{m+1})}_{h,m}} = \prod_{g=1}^{m}\frac{\xi_{c^{(h)}_{g+1}}}{ \bo{L}_{h,g}\cdot\bm \xi},
	\end{align*}
where $\bm \xi $ is  the probability distribution introduced  in the definition of $\p^{(k)}_r$ above.

Observe that the first term in the right-hand side represents the way we  choose the color $c^{(h)}_{2}$ from the  $\bo{L}_{h,1}$ children of $u^{(h)}_{1}$ and the next term represents a uniform choice from the  $\ell^{(c^{(h)}_{2})}_{h,1}$ offsprings of type $c^{(h)}_{2}$ and so on. We also observe that in the previous identity the knowledge of the types of each vertex $u^{(h)}_g$ are not required. Indeed, the spine may change types between branching events along its path.

Another way to write the previous identity  is the following
\begin{equation}\label{identitypk}
\p^{(k)}_r\paren{\left.\varsigma_t^{(h)}=v\right|\ \F_t}=\prod_{(w,c_w)\in \textrm{spine}(v)}\frac{\xi_{c_{w}}}{\bo L_{w}\cdot \bm \xi },
\end{equation}
where $c_w$ represents the color of the offspring of $w$ that the mark follows. Later on, we will use the notation $c_w$ for  the color of the offspring of $w$ and $c(w)$ for the color of $w$.

For any integer $n,j\in \na$, define 
\begin{equation}\label{decfac}
n^{\floor{j}}:=n(n-1)\cdots (n-j+1),
\end{equation}
 the decreasing factorial with the convention that $n^{\floor{j}}=0$ if $n<j$ and $n^{\floor{0}}=1$.  Thus \[
 N_{t}^{\floor{k}}=N_{t}(N_{t}-1)\cdots (N_{t}-k+1)\qquad\textrm{ if }\quad N_{t}\geq k
 \]
 and $N_{t}^{\floor{k}}=0$, otherwise. Recall that  $\mathcal{N}_{t}^{(k)}$ denotes the set of all possible $k$-tuples of particles which are alive at time $t$ and  observe that $N_{t}^{\floor{k}}$ is precisely the cardinality of $\mathcal{N}_{t}^{(k)}$.

 \subsubsection{Uniform sampling}\label{subsubsectionUniformSampling}
For our purposes, we introduce
\begin{equation}\label{eqnDefinitionOfG_ktAndZeta_kt}
\begin{split}
g_{k,t}:
=\sum_{\bo{v}\in \mathcal{N}_{t}^{(k)}}\mathbf{1}_{\{\bo{\varsigma}_t=\bo{v}\}}\prod_{h\in [k]}\prod_{(w,c_w)\in \mathrm{spine}(v^{(h)})}\frac{ \bo{L}_{w}\cdot \bm{\xi}}{\xi_{c_{w}}},
\end{split}
\end{equation}
with the convention that $g_{0,t}=1$; and for $\bm{\theta}\in \mathbb{R}_+^{d}$,
\begin{equation}\label{cebolla}
 \zeta^{\bm{\theta}}_{k,t}:=\frac{g_{k,t}e^{-\bm \theta\cdot \bo Z_{t}}}{\E^{(k)}_r\left[N^{\floor{k}}_{t}e^{-\bm \theta\cdot \bo Z_{t}}\right]}.
\end{equation}
 Then, it is clear that, for $t>0$,
\[
\begin{split}
 \E^{(k)}_r\left[\left. g_{k,t} \right|\ \F_t\right]&= \sum_{\bo{v}\in \mathcal{N}_{t}^{(k)}}\prod_{h\in [k]}\prod_{(w,c_w)\in \mathrm{spine}(v^{(h)})}\frac{ \bo{L}_{w}\cdot \bm{\xi}}{\xi_{c_{w}}}\p^{(k)}_r\paren{\left.\bo{\varsigma}_t=\bo{v} \right|\ \F_t}\\
&={\rm card}\{\mathcal{N}_{t}^{(k)}\}=N^{\floor{k}}_{t},
\end{split}
\] 
and thus
\begin{equation}\label{tomate}
 \E^{(k)}_r\left[\left.  g_{k,t}e^{-\bm \theta\cdot \bo Z_{t}} \right|\ \F_t\right]
= N^{\floor{k}}_{t}e^{-\bm \theta\cdot \bo Z_{t}}.
\end{equation}
 
The latter suggest that we can construct, for $t>0$,  the  following probability measure 
\begin{equation}\label{defMeasureQkTrGivenAllInfo}
	\left.\frac{{\rm d}\Q^{(k),\bm{\theta}}_{t,r}}{{\rm d}\p^{(k)}_r}\right|_{\F^{(k)}_t}:=\zeta^{\bm{\theta}}_{k,t}.
\end{equation}
The measure $\Q^{(k),\bm{\theta}}_{t,r}$ will be very useful for our purposes. Since $\E^{(k)}_r\left[\left.g_{k,t} \right|\ \F_t\right]= N^{\floor{k}}_{t}$, it follows 
\begin{equation}\label{defMeasureQkTrGivenTopologicalInfo}
\left.\frac{{\rm d}\Q^{(k),\bm{\theta}}_{t,r}}{{\rm d}\p^{(k)}_r}\right|_{\F_t}=\frac{N^{\floor{k}}_{t}e^{-\bm \theta\cdot \bo Z_{t}}}{\E^{(k)}_r\left[N^{\floor{k}}_{t}e^{-\bm \theta\cdot \bo Z_{t}}\right]}.
\end{equation}

We also introduce a change of measure $\p^{\bm \theta}_{t,r}$ for $\bm \theta\in \re^d_+$ and $t\in \re_+$, corresponding to the discounted MBGW process. It is defined by
\begin{equation}\label{eqnChangeOfMeasureMtypeBGWDiscounted}
\left.\frac{{\rm d}\p^{\bm \theta}_{t,r}}{{\rm d}\p_r}\right|_{\F_t}=\frac{e^{-\bm \theta\cdot \bo Z_{t}}}{\E_r\left[e^{-\bm \theta\cdot \bo Z_{t}}\right]}.
\end{equation}
This change of measure will naturally arise in our analysis of individuals  without  marks.

We emphasise that the law of the underlying MBGW tree is the same under $\mathbb{P}^{(k)}_{\mathbf{z}}$ as under $\mathbb{P}_{\mathbf{z}}$, for $\mathbf{z}\in \mathbb{Z}_+^d$, that is, for all $t\ge 0$,
\[
\mathbb{P}^{(k)}_{\mathbf{z}}=\mathbb{P}_{\mathbf{z}}\qquad \textrm{on } \mathcal{F}_t.
\] 
This fact will be used without comment later on.

\subsubsubsection{\it The probability measure \texorpdfstring{ $\p^{(k)}_{unif,t,r}$}{TEXT}}\label{defipunif}

Let us recall briefly Lemma 13 in \cite{MR4133376} which will be useful in the sequel.

\begin{lemma}\label{lemHJR} Let $\mu$ and $\nu$ be two probability measures on a $\sigma$-algebra $\F$ and $\G\subseteq \F$ is also a $\sigma$-algebra with Radon-Nikodym derivatives
\[
\left.\frac{{\rm d}\mu}{{\rm d}\nu}\right|_{\F}=:Y\qquad \mbox{ and }\qquad \left.\frac{{\rm d}\mu}{{\rm d}\nu}\right|_{\G}=:Z.
\]
Then for any non-negative random variable $X$, $\F$-measurable, we have
\[
Z\mu[X |\, \G]=\nu[XY|\, \G]\qquad \nu-a.s.
\]
\end{lemma}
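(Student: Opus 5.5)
The plan is the standard one: characterise both sides as $\G$-measurable functions and compare their integrals against indicators of sets in $\G$ under $\nu$.

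Both $Z\,\mu[X|\,\G]$ and $\nu[XY|\,\G]$ are $\G$-measurable and take values in $[0,\infty]$. Nonnegative conditional expectations are well defined even without integrability, and I choose versions that are defined everywhere. The existence of $Y$ means $\mu\ll\nu$ on $\F$, so $\mu$-a.s.\ statements are defined where needed. It therefore suffices to show that for every $A\in\G$,
\[
\nu\big[Z\,\mu[X|\,\G]\,\mathbf 1_A\big]=\nu\big[\nu[XY|\,\G]\,\mathbf 1_A\big].
\]

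To verify this I apply the two density relations one after the other. Take $A\in\G$. The function $W:=\mu[X|\,\G]\mathbf 1_A$ is nonnegative and $\G$-measurable. Since $Z$ is the density of $\mu$ with respect to $\nu$ on $\G$, the monotone class/monotone convergence extension of the defining relation gives $\nu[ZW]=\mu[W]$. The defining property of $\mu[X|\,\G]$ gives $\mu[W]=\mu[X\mathbf 1_A]$. Because $X\mathbf 1_A$ is $\F$-measurable and nonnegative, the density $Y$ on $\F$ yields $\mu[X\mathbf 1_A]=\nu[XY\mathbf 1_A]$. Finally, the tower property under $\nu$ gives $\nu[XY\mathbf 1_A]=\nu[\nu[XY|\,\G]\mathbf 1_A]$. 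Chaining these four equalities gives the displayed identity.

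The only genuine obstacle is passing from equality of integrals to $\nu$-a.s.\ equality when the common values may be infinite. I handle this by localisation. Let $U:=Z\mu[X|\,\G]$ and $V:=\nu[XY|\,\G]$, and for $n\in\na$ set $A_n:=\{U<V,\ U\le n\}\in\G$. Applying the identity on $A_n$ gives $\nu[(V-U)\mathbf 1_{A_n}]=0$; this subtraction is legitimate because $\nu[U\mathbf 1_{A_n}]\le n<\infty$. Since $V-U>0$ on $A_n$, it follows that $\nu(A_n)=0$. Letting $n\to\infty$, the event $\{U<V\}$ is $\nu$-null; the event $\{V<U\}$ is handled symmetrically with the roles of $U$ and $V$ exchanged, so $U=V$ $\nu$-a.s.

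It is worth checking that nothing goes wrong on $\{Z=0\}$. There the left side vanishes, while $\mu(Z=0)=0$ forces $\nu[XY\mathbf 1_{\{Z=0\}\cap A}]=\mu[X\mathbf 1_{\{Z=0\}\cap A}]=0$. The right side therefore vanishes $\nu$-a.s.\ there as well, consistent with the general argument above.
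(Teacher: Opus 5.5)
Your proof is correct. The paper gives no proof of its own here and simply recalls Lemma 13 of \cite{MR4133376}; your chain $\nu[Z\,\mu[X|\,\G]\mathbf 1_A]=\mu[\mu[X|\,\G]\mathbf 1_A]=\mu[X\mathbf 1_A]=\nu[XY\mathbf 1_A]=\nu[\nu[XY|\,\G]\mathbf 1_A]$ for $A\in\G$ is the standard argument behind that result, and your localisation over the sets $\{U<V,\ U\le n\}$ correctly handles conditional expectations that may be infinite.
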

 
We will use the previous Lemma with $\F=\F^{(k)}_t$, $\G=\F_t$, $\mu=\Q^{(k),\bm \theta}_{t,r}$, $\nu=\p^{(k)}_r$ and $X$ a non-negative random variable $\F^{(k)}_t$-measurable.
Thus
\[
Y=\frac{g_{k,t}e^{-\bm \theta\cdot \bo Z_{t}}}{\E^{(k)}_r\left[N^{\floor{k}}_{t}e^{-\bm \theta\cdot \bo Z_{t}}\right]}\qquad \mbox{and}\qquad Z=\frac{N^{\floor{k}}_{t}e^{-\bm \theta\cdot \bo Z_{t}}}{\E^{(k)}_r\left[N^{\floor{k}}_{t}e^{-\bm \theta\cdot \bo Z_{t}}\right]},
\] 
implying that
\[
	\frac{N^{\floor{k}}_{t}e^{-\bm \theta\cdot \bo Z_{t}}}{\E^{(k)}_r\left[N^{\floor{k}}_{t}e^{-\bm \theta\cdot \bo Z_{t}}\right]}\Q^{(k),\bm \theta}_{t,r}\Big[ X \Big|\ \F_t\Big]=\E^{(k)}_r\left[\left. X \frac{g_{k,t}e^{-\bm \theta\cdot \bo Z_{t}}}{\E^{(k)}_r\left[N^{\floor{k}}_{t}e^{-\bm \theta\cdot \bo Z_{t}}\right]}\right|\ \F_t\right].
\]
Considering $X=\mathbf{1}_{\{{\bm \varsigma_t}=\bo v\}}$, for $\bo v \in \mathcal{N}^{(k)}_t$, we get
\[
\begin{split}
	\frac{N^{\floor{k}}_{t}e^{-\bm \theta\cdot \bo Z_{t}}}{\E^{(k)}_r\left[N^{\floor{k}}_{t}e^{-\bm \theta\cdot \bo Z_{t}}\right]}\Q^{(k),\bm \theta}_{t,r}\paren{\left.\bm \varsigma_t=\bo v \right|\ \F_t}&=\E^{(k)}_r\left[\left. \mathbf{1}_{\{\bm \varsigma_t=\bo v \}} \frac{g_{k,t}e^{-\bm \theta\cdot \bo Z_{t}}}{\E^{(k)}_r\left[N^{\floor{k}}_{t}e^{-\bm \theta\cdot \bo Z_{t}}\right]}\right|\ \F_t\right]\\
	&=e^{-\bm \theta\cdot \bo Z_{t}}\E^{(k)}_r\left[\left. \mathbf{1}_{\{\bm \varsigma_t=\bo v\} } \frac{g_{k,t}}{\E^{(k)}_r\left[N^{\floor{k}}_{t}e^{-\bm \theta\cdot \bo Z_{t}}\right]}\right|\ \F_t\right].
\end{split}
\]
Recalling the definition of $g_{k,t}$,
we deduce
\begin{equation}\label{eqnQIsUniformGivenF0}
\begin{split}
	\Q^{(k),\bm \theta}_{t,r}\paren{\left.\bm \varsigma_t=\bo v \right|\ \F_t}
	&=\frac{1}{N^{\floor{k}}_{t}}\E^{(k)}_r\left[\left. \mathbf{1}_{\{\bm \varsigma_t=\bo v \}} g_{k,t}\right|\ \F_t\right]\\
	&= \frac{1}{N^{\floor{k}}_{t}}\prod_{h\in [k]}\prod_{(w,c_w)\in {\rm spine}(v^{(h)})}\frac{\bo{L}_w\cdot \bm \xi}{\xi_{c_w}}\p^{(k)}_r\paren{\left.   \varsigma_t^{(h)}= v^{(h)}  \right|\ \F_t}= \frac{1}{N^{\floor{k}}_{t}}.
\end{split}
\end{equation}
In other words,  under the measure $\Q^{(k),\bm \theta}_{t,r}$, the $k$-spines are a uniform choice without replacement from all  particles alive at time $t$. 
The latter also implies a more complete description of $\Q^{(k),\bm \theta}_{t,r}$ from its definition in \eqref{defMeasureQkTrGivenAllInfo}. 
Namely,
\begin{equation}\label{defMeasureQkTrGivingTwoSteps}
	\left.\frac{{\rm d}\Q^{(k),\bm \theta}_{t,r}}{{\rm d}\p^{(k)}_r}\right|_{\F^{(k)}_t}=\frac{N^{\floor{k}}_{t}e^{-\bm \theta\cdot \bo Z_{t}}}{\E^{(k)}_r\left[N^{\floor{k}}_{t}e^{-\bm \theta\cdot \bo Z_{t}}\right]}\frac{1}{N^{\floor{k}}_{t}}g_{k,t},
\end{equation}
which says that first we $k$-size bias and $\bm \theta$-discount the process given $\F_t$, and then we choose $k$ spines uniformly without replacement under $\Q^{(k),\bm \theta}_{t,r}$ given $\F_t$.

We now formally introduce the probability measure  $\p^{(k)}_{unif,t,r}$, which was previously introduced in \eqref{probuniIntro} without detailed exposition. This measure characterises uniform sampling without replacement from a MBGW tree, and we highlight its connection with the probability measures defined earlier. The validity of this interpretation will become apparent once the properties of this measure are established. Let 
\begin{equation}\label{probuni1}
\left.\frac{{\rm d}\p^{(k)}_{unif,t,r}}{{\rm d}\p^{(k)}_r}\right|_{\F^{(k)}_t}:=\frac{1}{\p_r\paren{N_t\geq k}}\frac{g_{k,t}}{N^{\floor{k}}_{t}}. 
\end{equation}
Note that the above display is a probability measure since by \eqref{tomate}, we have
\[
\E^{(k)}_r\left[\frac{g_{k,t}}{N^{\floor{k}}_{t}}\right]=\E^{(k)}_r\left[\frac{\mathbf{1}_{\{N_t\geq k\}}}{N^{\floor{k}}_{t}}\E^{(k)}\left[g_{k,t}\Big| \F_t\right]\right]=\p_r\paren{N_t\geq k}.
\]Also, just as the proof of \eqref{eqnQIsUniformGivenF0}, one can show that if $\bm \varsigma_t$ is a uniform sample without replacement at time $t$ and $\bo v\in \mathcal{N}^{(k)}_t$, then 
\[
\mathbf{1}_{\{N_t\geq k\}}\p^{(k)}_{unif,t,r}\paren{\left.\bm \varsigma_t=\bo v \right|\ \F_t}
	= \mathbf{1}_{\{N_t\geq k\}}\frac{1}{N^{\floor{k}}_{t}}.
\]Similarly, for  a function of $k$ distinct vertices of the tree at time $t$, say $f$,  and $A\in \F_t$; we have
\begin{equation}\label{probuni}
\p^{(k)}_{unif,t,r}(A)=\p_r\paren{A\big| N_t\geq k}\qquad \mbox{and} \qquad \mathbb{E}^{(k)}_{unif,t,r}\left[f( \bm{\varsigma})\right]=\mathbb{E}_r\left[\frac{1}{N^{\floor{k}}_t}\sum_{\bo{v}\in \mathcal{N}^{(k)}_t} f(\bo{v})\Bigg| N_t\ge k\right].
\end{equation}

From \eqref{defMeasureQkTrGivingTwoSteps}, \eqref{probuni1} and \eqref{probuni}, we observe the following relationship under the event $\{N_t\geq k\}$
\begin{equation}\label{defMeasureQkTrandPUnif}
\begin{split}
	\left.\frac{{\rm d}\Q^{(k),\bm \theta}_{t,r}}{{\rm d}\p^{(k)}_{unif,t,r}}\right|_{\F^{(k)}_t}& =\left.\frac{{\rm d}\Q^{(k),\bm \theta}_{t,r}}{{\rm d}\p^{(k)}_r}\right|_{\F^{(k)}_t}\times\left.\frac{{\rm d}\p^{(k)}_r}{{\rm d}\p^{(k)}_{unif,t,r}}\right|_{\F^{(k)}_t}\\
& =\frac{g_{k,t}e^{-\bm \theta\cdot \bo Z_{t}}}{\E^{(k)}_r\left[N^{\floor{k}}_{t}e^{-\bm \theta\cdot \bo Z_{t}}\right]}\p_r\paren{N_t\geq k}\frac{N^{\floor{k}}_t}{g_{k,t}}=\frac{N^{\floor{k}}_{t}e^{-\bm \theta\cdot \bo Z_{t}}}{\E^{(k)}_{unif,t,r}\left[N^{\floor{k}}_{t}e^{-\bm \theta\cdot \bo Z_{t}}\right]}.
\end{split}
\end{equation}
In other words when going from $\p^{(k)}_{unif,t,r}$ to $\Q^{(k),\bm \theta}_{t,r}$, then events in $\F^{(k)}_t$ are only affected through the tree topology (size-biasing and discounting), but the marks (or the uniform sample) is not affected. 
This relationship is useful for the following result. 

\begin{lemma}
\label{lemmaFromTheMeasureQToTheMeasureEunif}
	Suppose $A\in \F^{(k)}_t$. Then
	\[
\p^{(k)}_{unif,t,r}\big( A\big)=\E^{(k)}_r\Big[\left.N^{\floor{k}}_{t}e^{-\bm \theta\cdot \bo Z_{t}}\right|\ N_t\geq k\Big]\Q^{(k),\bm \theta}_{t,r}\left[\frac{\mathbf{1}_{A}}{N^{\floor{k}}_{t}e^{-\bm \theta\cdot \bo Z_{t}}}\right].
		\]
\end{lemma}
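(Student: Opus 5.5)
The plan is a direct change-of-measure computation based on the Radon--Nikodym derivative \eqref{defMeasureQkTrandPUnif}. Both measures $\p^{(k)}_{unif,t,r}$ and $\Q^{(k),\bm\theta}_{t,r}$ are defined through densities with respect to the reference measure $\p^{(k)}_r$ on $\F^{(k)}_t$, namely \eqref{probuni1} and \eqref{defMeasureQkTrGivenAllInfo}. So I will express $\p^{(k)}_{unif,t,r}(A)$ as a $\p^{(k)}_r$-expectation and then rewrite that expectation as a $\Q^{(k),\bm\theta}_{t,r}$-expectation by dividing by the density of $\Q$.

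Concretely, the first step is to record the supports. The density $g_{k,t}/N^{\floor{k}}_t$ in \eqref{probuni1} vanishes off $\{N_t\ge k\}$, since $g_{k,t}=0$ there because $\mathcal N^{(k)}_t=\emptyset$. Likewise $\zeta^{\bm\theta}_{k,t}$ vanishes off $\{N_t\ge k\}$. On $\{N_t\ge k\}$ both densities are strictly positive, because $g_{k,t}>0$: the spines at time $t$ are $k$ distinct alive particles $\p^{(k)}_r$-a.s. on that event, up to a null set. Hence on $\F^{(k)}_t$ the two measures are mutually absolutely continuous, both being concentrated on $\{N_t\ge k\}$.

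The second step is the computation. Using \eqref{probuni1},
\[
\p^{(k)}_{unif,t,r}(A)=\frac{1}{\p_r(N_t\ge k)}\E^{(k)}_r\left[\mathbf 1_A\frac{g_{k,t}}{N^{\floor{k}}_t}\right]
=\frac{\E^{(k)}_r\big[N^{\floor{k}}_te^{-\bm\theta\cdot\bo Z_t}\big]}{\p_r(N_t\ge k)}\,\E^{(k)}_r\left[\zeta^{\bm\theta}_{k,t}\frac{\mathbf 1_A}{N^{\floor{k}}_te^{-\bm\theta\cdot\bo Z_t}}\right].
\]
Here I multiply and divide by $e^{-\bm\theta\cdot\bo Z_t}$ and by the normalising constant, which is legitimate on $\{N_t\ge k\}$, where everything is finite and positive. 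The last expectation equals $\Q^{(k),\bm\theta}_{t,r}\big[\mathbf 1_A/(N^{\floor{k}}_te^{-\bm\theta\cdot\bo Z_t})\big]$ by \eqref{defMeasureQkTrGivenAllInfo}. The integrand is $\F^{(k)}_t$-measurable, and it is $\Q$-a.s. well defined because $\Q$ charges only $\{N_t\ge k\}$.

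The third step is to identify the prefactor. Since $N^{\floor{k}}_t=0$ off $\{N_t\ge k\}$ and $\p^{(k)}_r=\p_r$ on $\F_t$, we get $\E^{(k)}_r[N^{\floor{k}}_te^{-\bm\theta\cdot\bo Z_t}]/\p_r(N_t\ge k)=\E^{(k)}_r[N^{\floor{k}}_te^{-\bm\theta\cdot\bo Z_t}\mid N_t\ge k]$, which is the claimed formula. This also agrees with the denominator in \eqref{defMeasureQkTrandPUnif}, by \eqref{probuni}.

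The only delicate point is the support and division-by-zero issue in the second step. One must check that $\mathbf 1_A$ can be replaced by $\mathbf 1_{A\cap\{N_t\ge k\}}$ without loss under $\p^{(k)}_{unif,t,r}$, and that the normalising constant $\E^{(k)}_r[N^{\floor{k}}_te^{-\bm\theta\cdot\bo Z_t}]$ is finite and positive. Finiteness holds for $\bm\theta$ with positive entries, since $n^{\floor{k}}e^{-\theta n}$ is bounded. For $\bm\theta=0$ I would either assume finite $k$-th moments or interpret the identity with the convention that both sides are defined. Positivity follows from non-simplicity and irreducibility in \eqref{hyp1}, which give $\p_r(N_t\ge k)>0$.
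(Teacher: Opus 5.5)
Your proof is correct and is essentially the paper's. The paper reads the identity off the composite density \eqref{defMeasureQkTrandPUnif} of $\Q^{(k),\bm\theta}_{t,r}$ with respect to $\p^{(k)}_{unif,t,r}$, using \eqref{probuni} for the prefactor; you unwind that same composite through the reference measure $\p^{(k)}_r$ via \eqref{probuni1} and \eqref{defMeasureQkTrGivenAllInfo}.

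Two side remarks are inaccurate, though nothing rests on them, since your Step 2 is a pointwise identity of integrands. First, under $\p^{(k)}_r$ the spines can coincide or run into a leaf, so in general $g_{k,t}=0$ with positive probability on $\{N_t\ge k\}$; both measures are carried by $\{g_{k,t}>0\}$, not by all of $\{N_t\ge k\}$. Second, \eqref{hyp1} alone does not force $\p_r(N_t\ge k)>0$ (e.g.\ when no individual ever has two or more children); the paper simply presupposes this when defining $\p^{(k)}_{unif,t,r}$.
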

\begin{proof}
The proof follows directly from \eqref{defMeasureQkTrandPUnif}, since either under $\p^{(k)}_{unif,t,r}$ or $\Q^{(k),\bm \theta}_{t,r}$, we have $N_t\geq k$. 
\end{proof}

\subsection{Properties under \texorpdfstring{ $\mathbb{Q}^{(k),\bm{\theta}}_{T,r}$}{TEXT} and forward construction}\label{subsectionPropertiesOfQkUnderForwardConstruction}

From now on, we  fix $T\in \re_+$. 
As discussed above,  we have constructed a corresponding auxiliary measure $\mathbb{Q}^{(k),\bm{\theta}}_{T,r}$. This construction is useful because it allows us to analyse certain functionals of the sampling process under the original measure in a more transparent manner. Indeed, under this auxiliary measure, the tree itself evolves as a branching process, which significantly simplifies the analysis. In particular, many functionals of interest admit a simpler and more tractable representation. We will illustrate this idea in detail below.

The next lemma is fundamental for our results and follows similar ideas to those  in Lemma 8 of \cite{MR4133376}. We include its proof for the sake of completeness. In particular, it implies that under $\Q^{(k),\bm \theta}_{T,r}$ the tree also behaves as a branching process, that is, individuals give birth to independent branching processes with marks. However, one must take into account the initial parameters, namely the number of marks, the remaining  and the type of the root, which determine the evolution of the object.

\begin{lemma}[Markov branching property]\label{lemmaMrkovPropertyUnderQ}
Assume $j\in \na\cup\{0\}$ and $k\in\na$ such that $j\leq k$ and $t\in (0,T)$.
Suppose that a vertex $v$ in $\mathcal{N}_{t}$, of type $c(v)$, carries $j$ marks at time $t$. 
Then, under $\Q^{(k),\bm \theta}_{T,r}$, the subtree generated by $v$ after time $t$ is independent of the rest of the system and behaves as if under $\Q^{(j),\bm \theta}_{T-t,c(v)}$. In particular, if $j=0$ then $\Q^{(0),\bm \theta}_{T-t,c(v)}=\p^{\bm \theta}_{T-t,c(v)}$.
\end{lemma}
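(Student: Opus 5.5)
We prove the Markov branching property (Lemma \ref{lemmaMrkovPropertyUnderQ}) by computing the Radon--Nikodym derivative \eqref{defMeasureQkTrGivenAllInfo} at time $T$ and factorising it along the subtree rooted at $v$. Fix $t\in(0,T)$ and condition on $\F^{(k)}_t$. At time $t$ the population consists of particles $u\in\mathcal N_t$, and each carries a set of marks. Write $J_u$ for the set of marks carried by $u$ and $j_u=\mathrm{card}\,J_u$, so that $\sum_u j_u=k$.

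Under $\p^{(k)}_r$, given $\F^{(k)}_t$, the subtrees rooted at the particles $u\in\mathcal N_t$ evolve independently. The subtree of $u$ carries its $j_u$ spines and has law $\p^{(j_u)}_{c(u)}$ shifted by time $t$; this is immediate from the forward construction of $\p^{(k)}$ and the strong Markov and branching properties of the MBGW process.

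Next we decompose $g_{k,T}e^{-\bm\theta\cdot\bo Z_T}$. The indicator $\mathbf 1_{\{\bm\varsigma_T=\bo v\}}$ forces each spine to pass through its time-$t$ ancestor. The product over $\mathrm{spine}(v^{(h)})$ therefore splits into the factors from branchings before $t$, which are $\F^{(k)}_t$-measurable, and the factors from branchings after $t$, which lie inside the relevant subtree. Summing over $\bo v\in\mathcal N^{(k)}_T$ then gives
\[
g_{k,T}=g_{k,t}\prod_{u\in\mathcal N_t} g^{(u)}_{j_u,T-t},
\]
where $g^{(u)}_{j_u,T-t}$ is the analogue of \eqref{eqnDefinitionOfG_ktAndZeta_kt} for the subtree of $u$ with its $j_u$ spines. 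Here we use the convention $g^{(u)}_{0,\cdot}=1$. Distinctness of the $k$ spines at time $T$ reduces to distinctness within each subtree, because spines in different subtrees are automatically distinct. In the same way,
\[
e^{-\bm\theta\cdot\bo Z_T}=\prod_{u\in\mathcal N_t}e^{-\bm\theta\cdot\bo Z^{(u)}_{T-t}}.
\]

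We now compute $\Q^{(k),\bm\theta}_{T,r}$-expectations of a product $\prod_u F_u$, with $F_u$ a bounded functional of the marked subtree of $u$, multiplied by $W\in\F^{(k)}_t$. Under $\p^{(k)}_r$ these become $W$ times
\[
\frac{g_{k,t}\prod_u\E^{(j_u)}_{c(u)}\big[F_u\,g_{j_u,T-t}e^{-\bm\theta\cdot\bo Z_{T-t}}\big]}{\E^{(k)}_r\big[N_T^{\floor{k}}e^{-\bm\theta\cdot\bo Z_T}\big]}.
\]
We multiply and divide by $\prod_u\E^{(j_u)}_{c(u)}\big[N_{T-t}^{\floor{j_u}}e^{-\bm\theta\cdot\bo Z_{T-t}}\big]$. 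By \eqref{defMeasureQkTrGivenAllInfo} at horizon $T-t$, each factor becomes $\Q^{(j_u),\bm\theta}_{T-t,c(u)}[F_u]$ times that normaliser, using \eqref{tomate}. When $j_u=0$, \eqref{eqnChangeOfMeasureMtypeBGWDiscounted} gives $\p^{\bm\theta}_{T-t,c(u)}[F_u]\,\E_{c(u)}[e^{-\bm\theta\cdot\bo Z_{T-t}}]$. The leftover factor is $\F^{(k)}_t$-measurable. Setting all $F_u\equiv1$ identifies it as the density of $\Q^{(k),\bm\theta}_{T,r}$ restricted to $\F^{(k)}_t$. This gives
\[
\Q^{(k),\bm\theta}_{T,r}\Big[\prod_u F_u\,\Big|\,\F^{(k)}_t\Big]=\prod_u\Q^{(j_u),\bm\theta}_{T-t,c(u)}[F_u],
\]
which is the claimed conditional independence together with the identification of the subtree laws, including the case $j=0$.

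The main obstacle is the factorisation of $g_{k,T}$: we must check that the sum over $k$-tuples in $\mathcal N^{(k)}_T$ with the spine indicator decomposes exactly into a product over time-$t$ particles of sums over $j_u$-tuples. This requires care with how marks split across subtrees. The remaining steps are routine once that factorisation is in place.
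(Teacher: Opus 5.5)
\textbf{An error in the factorisation.} Your displayed identity $g_{k,T}=g_{k,t}\prod_{u\in\mathcal N_t}g^{(u)}_{j_u,T-t}$ is false as written. By \eqref{eqnDefinitionOfG_ktAndZeta_kt}, $g_{k,t}$ is a sum over $\mathcal N^{(k)}_t$, which consists of the $k$-tuples of \emph{distinct} particles alive at time $t$. Hence $g_{k,t}=0$ whenever some time-$t$ particle carries two or more marks. That is exactly the case $j\ge 2$ you need, and there $g_{k,T}$ can be positive.

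The $\F^{(k)}_t$-measurable factor you describe in words (``the factors from branchings before $t$'') is instead
\[
\prod_{h\in[k]}\ \prod_{(w,c_w)\in\mathrm{spine}(\varsigma^{(h)}_t)}\frac{\bo L_w\cdot\bm\xi}{\xi_{c_w}},
\]
with no distinctness indicator; compare the last product in \eqref{descomph}. With this weight the factorisation is immediate. $g_{k,T}$ has at most one nonzero summand, the one with $\bo v=\bm\varsigma_T$. Survival and distinctness of all $k$ spines at time $T$ is equivalent to survival and distinctness within each group $J_u$, since spines in different subtrees are automatically distinct. Nothing else in your argument changes: afterwards you only use that this factor is $\F^{(k)}_t$-measurable, and you identify the leftover by taking $F_u\equiv 1$.

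\textbf{Comparison with the paper's route.} With that correction your proof is valid, but it is organised differently from the paper's. The paper fixes the single vertex $v$ and conditions on the $\sigma$-algebra $\mathcal H$ generated by everything except the subtree of $v$ after $t$, which includes the futures of all other subtrees. It uses Lemma~\ref{lemHJR} to write $\Q^{(k),\bm\theta}_{T,r}(\cdot\,|\,\mathcal H)$ as a ratio of $\p^{(k)}_r$-conditional expectations. It then cancels the $\mathcal H$-measurable factor $h_Te^{-\bm\theta\cdot(\bo Z_T-\bo Z^{[v]}_T)}$ and recognises the remaining ratio as $\Q^{(j),\bm\theta}_{T-t,c(v)}$. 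As written, it does this only for the event $\{\chi_v>t',\bo L_v=\bm\ell\}$.

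You instead condition on $\F^{(k)}_t$ and factorise the density over all particles alive at $t$ at once. Via a monotone class argument on product functionals, this gives two things the paper does not state explicitly:
\begin{itemize}
\item the mutual conditional independence of all the marked subtrees;
\item the law of each entire marked subtree, including the spine positions.
\end{itemize}
This is the form actually invoked later, e.g.\ for the conditional independence of the events $\Delta^{m,q}_T$ in the proof of Proposition~\ref{propoJointSplittingEventWithPartitionChildAndType}. The paper's conditioning on $\mathcal H$, by contrast, gives independence of one vertex's subtree from the whole rest of the system directly, without needing the product structure.
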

\begin{proof}
Define $\mathcal{H}:=\mathcal{H}_{v,t}$ the $\sigma$-algebra generated by all the information except in the subtree generated by $v$ after time $t$. This includes the type of $v$. 
Define $\chi_v$ as the  life time of $v$, after which  instantaneously dies and gives birth to $\bo{L}_{v}$ children (we define similarly $\chi_\emptyset$ and $\bo L_\emptyset$ for the life time and number of offspring of the root).
Let $I_{v,t}$ be the set of marks carried by $v$ at time $t$. 
Let $\bo{Z}^{[v]}_{T}$ be the vector of number of individuals alive at time $T$ whose ancestor is $v$, from the original MBGW process  under $\p_r$.   

Our aim is to prove that for any $t'\in (t,T)$ and $\bm \ell\in \z^d_+\setminus\{\bo 0\}$, we have
\[
\Q^{(k),\bm \theta}_{T,r}\paren{\chi_v>t',\bo{L}_v=\bm{\ell}\left.\right|\mathcal{H}}=\Q^{(j),\bm \theta}_{T-t,c(v)}\paren{\chi_\emptyset>t'-t,\bo{L}_\emptyset=\bm{\ell}}.
\]First we decompose $g_{k,T}$ in two terms, one containing only the information of the subtree generated by $v$ and between the interval of time $[t,T]$, that is
\[
g_{v,[t,T]}:=\mathbf{1}_{\{\varsigma^{(m)}_T\neq \varsigma^{(n)}_T,\mbox{ for }\ m\neq n,\mbox{ with }m,n\in I_{v,t}\}}\prod_{h\in I_{v,t}}\prod_{\substack{(w,c_w)\in {\rm spine}(\varsigma^{(h)}_T),\\{\rm gen}(w)\in [t,T]}}\frac{\bo{L}_w\cdot \bm \xi}{\xi _{c_w}},
\]
where ${\rm gen}(v)\in [a,b]$ denotes that the branching event creating $v$ occurs in $[a,b]$.  The second terms contains the remainder of the information of the tree which is $\mathcal{H}$-measurable, i.e.
\begin{equation}\label{descomph}
	\begin{split}
		h_{T}:=&\mathbf{1}_{\{\varsigma^{(m)}_T\neq \varsigma^{(n)}_T,\mbox{ for }\ m\neq n,\mbox{ with }m,n\in I^c_{v,t}\}}\prod_{h\in I^c_{v,t}}\prod_{(w,c_w)\in {\rm spine}(\varsigma^{(h)}_T)}\frac{\bo{L}_w\cdot \bm \xi}{\xi _{c_w}}\\
		& \hspace{6cm}\times \prod_{h\in I_{v,t}}\prod_{\substack{(w,c_w)\in {\rm spine}(\varsigma^{(h)}_T),\\
		{\rm gen}(w)\in[0,t)}}\frac{\bo{L}_w\cdot \bm \xi}{\xi _{c_w}},
	\end{split}
\end{equation}
where $I^c_{v,t}$ is the complement of $I_{v,t}$. Observe that when $v$ has no marks $I_{v,t}$ is the empty set and; 
\[
g_{v,[t,T]}=1\qquad \textrm{ and }\qquad h_{T}=g_{k,T}.
\]

Using Lemma \ref{lemHJR} with  $\F=\F^{(k)}_T$, $\G=\mathcal{H}$, $\mu=\Q^{(k),\bm \theta}_{T,r}$, $\nu=\p^{(k)}_r$,  $X=\mathbf{1}_{\{\chi_v>t',\bo{L}_v=\bm{\ell}\}}$, 
\[
Y=\frac{g_{k,T}e^{-\bm \theta\cdot \bo Z_{T}}}{\E^{(k)}_r\Big[N^{\floor{k}}_Te^{-\bm \theta\cdot \bo Z_{T}}\Big]}\qquad \mbox{and}\qquad Z=\E^{(k)}_r\left[\left.\frac{g_{k,T}e^{-\bm \theta\cdot \bo Z_{T}}}{\E^{(k)}_r\Big[N^{\floor{k}}_Te^{-\bm \theta\cdot \bo Z_{T}}\Big]}\right|\, \mathcal{H}\right],
\]
we thus obtain
\begin{align*}
	&\E^{(k)}_r\left[\left.\frac{g_{k,T}e^{-\bm \theta\cdot \bo Z_{T}}}{\E^{(k)}_r\Big[N^{\floor{k}}_Te^{-\bm \theta\cdot \bo Z_{T}}\Big]}\right|\, \mathcal{H}\right]\Q^{(k),\bm \theta}_{T,r}\paren{\left.\chi_v>t',\bo{L}_v=\bm{\ell} \right|\ \mathcal{H}}\\
	&\hspace{5cm}=\E^{(k)}_r\left[\left. \mathbf{1}_{\{\chi_v>t',\bo{L}_v=\bm{\ell}\}} \frac{g_{k,T}e^{-\bm \theta\cdot \bo Z_{T}}}{\E^{(k)}_r\left[N^{\floor{k}}_Te^{-\bm \theta\cdot \bo Z_{T}}\right]}\right|\ \mathcal{H}\right].
\end{align*}
We observe that in the last expression, we may cancel the denominators on both sides.  Next, we use the decomposition
\[
g_{k,T}e^{-\bm \theta\cdot \bo Z_{T}}=g_{v,[t,T]}e^{-\bm \theta\cdot \bo Z^{[v]}_{T}}\times h_Te^{-\bm \theta\cdot \paren{\bo Z_{T}-\bo Z^{[v]}_{T}}},
\]
to cancel on both sides the $\mathcal{H}$-measurable factor, that is  $h_Te^{-\bm \theta\cdot \paren{\bo Z_{T}-\bo Z^{[v]}_{T}}}$, thus we obtain the following identity
\begin{align*}
	&\Q^{(k),\bm \theta}_{T,r}\paren{\left.\chi_v>t',\bo{L}_v=\bm{\ell} \right|\ \mathcal{H}}=\frac{\E^{(k)}_r\left[\left. \mathbf{1}_{\{\chi_v>t',\bo{L}_v=\bm{\ell}\}}g_{v,[t,T]}e^{-\bm \theta\cdot \bo Z^{[v]}_{T}}\right|\ \mathcal{H}\right]}{\E^{(k)}_r\Big[g_{v,[t,T]}e^{-\bm \theta\cdot \bo Z^{[v]}_{T}}\Big|\, \mathcal{H}\Big]}.
\end{align*}
Since the vertex $v$ is of type $c(v)$ and has $j$ marks at time $t$ by hypothesis, by the Markov branching property under $\p^{(k)}_r$ (which is naturally inherited from its construction since the dynamics of the marks are also Markovian) and \eqref{defMeasureQkTrGivenAllInfo}, we have
\[
\begin{split}
\Q^{(k),\bm \theta}_{T,r}\paren{\left.\chi_v>t',\bo{L}_v=\bm{\ell} \right|\ \mathcal{H}}&=\frac{\E^{(j)}_{c(v)}\left[\mathbf{1}_{\{\chi_\emptyset>t'-t,\bo{L}_\emptyset=\bm{\ell}\}} g_{j,T-t}e^{-\bm \theta\cdot \bo Z_{T-t}}\right]}{\E^{(j)}_{c(v)}\left[g_{j,T-t}e^{-\bm \theta\cdot \bo Z_{T-t}}\right]}\\
&=\frac{\E^{(j)}_{c(v)}\left[\mathbf{1}_{\{\chi_\emptyset>t'-t,\bo{L}_\emptyset=\bm{\ell}\}} g_{j,T-t}e^{-\bm \theta\cdot \bo Z_{T-t}}\right]}{\E^{(j)}_{c(v)}\left[N^{\floor{j}}_{T-t}e^{-\bm \theta\cdot \bo Z_{T-t}}\right]}\\
& = \E^{(j)}_{c(v)}\left[\mathbf{1}_{\{\chi_\emptyset>t'-t,\bo{L}_\emptyset=\bo{\ell}\}} \zeta^{\bm \theta}_{j,T-t}\right]\\
& = \Q^{(j),\bm \theta}_{T-t,c(v)}\paren{\chi_\emptyset>t'-t,\bo{L}_\emptyset=\bm{\ell}},
\end{split}
\]
where in the second equality we have used  \eqref{tomate}, the third equality uses the definition in \eqref{cebolla} and finally in the last equality we use the definition of $\Q^{(j),\bm \theta}_{T,r}$ in \eqref{defMeasureQkTrGivenAllInfo}. This completes  the proof.
\end{proof}
Our goal is to construct the multitype tree under $\Q^{(k),\bm \theta}_{T,r}$, forward in time. This construction is based on a recursive procedure. In the single-type case, a similar approach is detailed in Lemma 2.5 of \cite{MR4718398}. Here, we extend the construction from \cite{MR4718398} to the multitype setting.

Before we do so, we recall the notation used in the definition of coloured partitions above identity \eqref{prom} and introduce for $m\in [d]$, 
\[
d_{m,n}:=\textrm{card}\{q:a_{m,q}=n\}\qquad \textrm{for any }\quad n\geq 1.
\] 
\begin{propo}[Size-biased and discounted Galton-Watson process with $k$ spines under $\Q^{(k),\bm \theta}_{T,r}$]\label{forwconstr}  
Let $T\in \re_+$, $t\in (0,T)$, $k\in \na$, and $r\in [d]$.
Consider the process $\mathcal{N}=(\mathcal N_t)_{t\in [0,T]}$ with $k$ spines $\bm \varsigma_t=( \varsigma^{(1)}_t,\ldots,  \varsigma^{(k)}_t)$ at time $t$, under $\Q^{(k),\bm \theta}_{T,r}$. 
Then, $\mathcal N$ evolves as follows:
\begin{enumerate}
	\item The process starts at time 0 with one particle carrying all $k$ spines (i.e., $\mathcal{N}_0=\{\emptyset\} $, and $\bm\varsigma_0=(\emptyset,\ldots, \emptyset) $).
	\item A particle carrying $h\in [k]$ spines evolves a subtree forward in time independently of the rest of the process (branching Markov property).
	\item A particle type $i\in [d]$ carrying $h\in [k]$ spines at time $t\in (0,T)$, branches into $\bm \ell\in \z^d_+\setminus\{\bo 0\}$ offspring, and the $h$ spines split into $g_m\in \{0,1,\ldots,\ell_m\}$ different individuals (groups) of type $m\in [d]$, the $q$-th carrying $a_{m,q} \in [h]$ marks, with $\sum_{m\in[d]} \overline{a}_{m}=h$ and $q\in [g_m]$, at rate
	\begin{equation}
		\begin{split}
			& \alpha_i\E_i\left[\bo L^{\floor{\bo g}}\prod_{m=1}^d\E_m\left[e^{-\bm \theta \cdot \bo Z_{T-t}}\right]^{L^{(m)}-g_m}\right]
			\\
			& \hspace{1cm} \times\frac{\bm \ell^{\floor{\bo g}}			\prod_{m=1}^d\E_m\left[e^{-\bm \theta \cdot \bo Z_{T-t}}\right]^{\ell_m-g_m}p_i(\bm{\ell})}{\E_i\left[\bo L^{\floor{\bo g}}\prod_{m=1}^d\E_m\left[e^{-\bm \theta \cdot \bo Z_{T-t}}\right]^{L^{(m)}-g_m}\right]}
			\frac{\prod_{\substack{m\in [d]\\ g_m\neq 0}}\prod_{q=1}^{g_m}\E_m\left[N^{\floor{a_{m,q}}}_{T-t}e^{-\bm \theta \cdot \bo Z_{T-t}}\right]}{\E_i\left[N^{\floor{h}}_{T-t}e^{-\bm \theta\cdot \bo Z_{T-t}}\right]}.
		\end{split}
	\end{equation}In particular, a birth off the spine, that is, all marks follow the same vertex type $j\in [d]$, occurs at rate 
	\begin{equation}
		\begin{split}
			& \alpha_i\E_i\left[L^{(j)}\prod_{m=1}^d\E_m\left[e^{-\bm \theta \cdot \bo Z_{T-t}}\right]^{L^{(m)}-\delta_{m,j}}\right]\\
			&\hspace{4cm}\times\frac{\ell_{j}p_i(\bm{\ell})
			\prod_{m=1}^d\E_m\left[e^{-\bm \theta \cdot \bo Z_{T-t}}\right]^{\ell_m}}{\E_i\left[L^{(j)}\prod_{m=1}^d\E_m\left[e^{-\bm \theta \cdot \bo Z_{T-t}}\right]^{L^{(m)}}\right]}
			\frac{\E_{j}\left[N^{\floor{h}}_{T-t}e^{-\bm \theta \cdot \bo Z_{T-t}}\right]}{\E_i\left[N^{\floor{h}}_{T-t}e^{-\bm \theta\cdot \bo Z_{T-t}}\right]}.
		\end{split}
	\end{equation}
	\item 
Given a particle carrying $h$ spines branches into $\bm \ell$ offspring, where the $h$ spines split into $g_m$ groups of type $m$, each of size $a_{m,q}$, the spines are assigned between the offspring as follows:
	\begin{itemize}
	\item Choose $\bo g$ of the $\bm \ell$ offspring to carry the spine groups uniformly amongst the $ \prod_{m=1}^d{\ell_m \choose g_m}$ distinct ways.
\item For each of the $g_m$ offspring chosen to carry the spine groups, assign group sizes $a_{m,1},\ldots, a_{m,g_m}$ amongst the $g_m$ offspring uniformly amongst the $g_m!/\prod_{n\geq 1}d_{m,n}!$ distinct allocations.
\item Partition the $h$ (labelled) spines between the $d$ types with their given group sizes $\overline a_{1},\ldots, \overline a_{d}$ uniformly amongst the $h!/\prod_{m=1}^{d} \overline a_m!$ distinct ways.
\item Partition the $\overline a_m$ (labelled) spines between the $g_m$  individuals carrying marks, using the block sizes $(a_{m,q})_{q\in [g_m]}$, uniformly amongst the $\overline a_m!/\prod_{q=1}^{g_m} a_{m,q}!$ distinct ways.
	\end{itemize}
	\item Finally, any particle $v$ type $i$, which is alive at time $t$ and carries no spines, behaves independently of the remainder of the process and undergoes branching into offspring that carry no spines at rate
\[
\alpha_i\frac{\E_i\left[\prod_{m=1}^d\E_m\left[e^{-\bm \theta\cdot \bo Z_{T-t} }\right]^{ L^{(m)}} \right]}{\E_i\left[e^{-\bm \theta\cdot \bo Z_{T-t} }\right]}{\rm d} t,
\]and given there is a branching event, there are $\bm \ell$ offspring with probability
\[
p_i(\bm \ell)\frac{\prod_{m=1}^d\E_m\left[e^{-\bm \theta\cdot \bo Z_{T-t} }\right]^{\ell_m}}{\E_i\left[\prod_{m=1}^d\E_m\left[e^{-\bm \theta\cdot \bo Z_{T-t} }\right]^{ L^{(m)}} \right]}.
\]

\end{enumerate}
\end{propo}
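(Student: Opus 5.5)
The plan is to reduce everything to a single branching event of a particle carrying spines, using Lemma \ref{lemmaMrkovPropertyUnderQ}. Items 1 and 2 are immediate from the definition of $\Q^{(k),\bm\theta}_{T,r}$ and from Lemma \ref{lemmaMrkovPropertyUnderQ}. That lemma also shows that, conditionally on the past, a particle of type $i$ carrying $h$ marks at time $t$ has its subtree distributed as under $\Q^{(h),\bm\theta}_{T-t,i}$. It is therefore enough to compute, under $\Q^{(h),\bm\theta}_{T-t,i}$ with time shifted to $0$, the joint law of the first branching time $\chi_\emptyset$, the offspring vector $\bo L_\emptyset$, and the assignment of the $h$ marks among the children. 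For $j=0$ this is $\p^{\bm\theta}_{T-t,i}$, which gives item 5.

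For the computation I would use \eqref{defMeasureQkTrGivenAllInfo} with $k=h$, horizon $s:=T-t$ and root type $i$, and evaluate
\[
\E^{(h)}_i\big[\mathbf 1_{\{\chi_\emptyset\in{\rm d}u,\ \bo L_\emptyset=\bm\ell,\ \text{marks split as }\bo P\}}\,g_{h,s}e^{-\bm\theta\cdot\bo Z_s}\big].
\]
Under $\p^{(h)}_i$ the event has probability $\alpha_ie^{-\alpha_iu}{\rm d}u\,p_i(\bm\ell)$ times the probability that the marks choose exactly the prescribed children. By rule 4 in the definition of $\p^{(k)}_r$, that probability is $\prod_{\text{marks}}\xi_{c}/(\bm\ell\cdot\bm\xi)$. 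This factor cancels exactly against the first-generation factor $(\bm\ell\cdot\bm\xi)/\xi_c$ contained in $g_{h,s}$, which is the mechanism already used in \eqref{eqnQIsUniformGivenF0}.

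Next I would apply the branching property under $\p^{(h)}_i$ at time $u$. The remaining factor $g_{h,s}e^{-\bm\theta\cdot\bo Z_s}$ factorises over the children. A child of type $m$ carrying $a_{m,q}$ marks contributes $g_{a_{m,q},s-u}e^{-\bm\theta\cdot\bo Z_{s-u}}$, whose expectation is $\E_m[N^{\floor{a_{m,q}}}_{s-u}e^{-\bm\theta\cdot\bo Z_{s-u}}]$ by \eqref{tomate}. A child carrying no marks contributes $\E_m[e^{-\bm\theta\cdot\bo Z_{s-u}}]$. Dividing by the normaliser $\E_i[N^{\floor h}_se^{-\bm\theta\cdot\bo Z_s}]$, the $u$-density of a branching with configuration $(\bm\ell,\bo P)$ at a \emph{specific} choice of labelled children becomes
\[
\alpha_ie^{-\alpha_iu}p_i(\bm\ell)\prod_m\E_m\big[e^{-\bm\theta\cdot\bo Z_{s-u}}\big]^{\ell_m-g_m}\prod_{m,q}\E_m\big[N^{\floor{a_{m,q}}}_{s-u}e^{-\bm\theta\cdot\bo Z_{s-u}}\big]\Big/\E_i\big[N^{\floor h}_se^{-\bm\theta\cdot\bo Z_s}\big].
\]
Summing over which $\bo g$ labelled children carry the groups gives the factor $\bm\ell^{\floor{\bo g}}$ up to the allocation counts of item 4. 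Since the expression is the same for every choice of children and every labelled mark allocation, all allocations are equally likely, and this yields the uniform assignment rules of item 4.

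To pass from this first-jump density to a hazard rate, I would divide by $\Q(\chi_\emptyset>u)$. By the same computation with the event $\{\chi_\emptyset>u\}$ and the Markov property at $u$, this survival probability equals $e^{-\alpha_iu}\E_i[N^{\floor h}_{s-u}e^{-\bm\theta\cdot\bo Z_{s-u}}]/\E_i[N^{\floor h}_se^{-\bm\theta\cdot\bo Z_s}]$. The ratio gives exactly the rate in item 3, evaluated at current time $t+u$ and remaining horizon $T-t-u$. The factor $\E_i[\bo L^{\floor{\bo g}}\prod\cdots]$ is only inserted and divided out, splitting the rate into a total rate times a conditional law of $\bm\ell$. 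The off-spine special case is $\bo g=\bo e_j$, $a_{j,1}=h$. Item 5 is the same computation with $g\equiv1$.

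The main obstacle is the combinatorics of counting labelled configurations, which must reproduce $\bm\ell^{\floor{\bo g}}$ and the uniform allocation counts of item 4 without double counting. This is where the multiplicities $d_{m,n}!$ enter when groups of equal size are permuted. I would handle it by first fixing the fully labelled assignment (which child receives which mark set), checking that its weight does not depend on the labels, and only then summing.
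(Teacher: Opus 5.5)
Your proposal is correct and follows essentially the same route as the paper. You reduce to a root computation via Lemma \ref{lemmaMrkovPropertyUnderQ}, cancel the mark-choice probability $\prod \xi_c/(\bm\ell\cdot\bm\xi)$ against the first-generation factor of $g_{h,T-t}$, factorise over the children using \eqref{tomate}, and obtain item 4 by counting equally weighted labelled configurations; this is the content of Lemma \ref{lemmaGeneralRateWhenABranchingEventOccursKnowingInfo}, Lemma \ref{lemmaLawOfIndividualsWithNoMarks} and Corollary \ref{coroSplittingEventWithPartitionChildAndType}. The one difference is cosmetic: the paper conditions on $\{\chi_\emptyset>t\}$ and works with $\epsilon$-window probabilities at deterministic times, whereas you decompose at the first branching time and divide the resulting density by the survival function of Lemma \ref{lemmaTailOfPsi_1}, which gives the same rate.
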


In order to prove Proposition \ref{forwconstr}, we require some intermediate results. We  present such results in the forthcoming subsection, together with their proofs and at the end we present the proof of  Proposition \ref{forwconstr}. Actually, it will follow from Proposition \ref{propoFirstSplittingEventWithPartitionChildAndType} (which is step (3)), Corollary \ref{coroSplittingEventWithPartitionChildAndType} (which is step (4)) and Lemma \ref{lemmaLawOfIndividualsWithNoMarks} (step (5)).

 \subsection{Proof of the forward construction} Before we deduce Proposition \ref{forwconstr}, we first obtain a decomposition, along the  time interval $[0,T]$, of $g_{k,T}$ and $\bo Z_T$ which will be useful for the forthcoming analysis. These decompositions are essential for computing the probabilities of some branching  events under $\Q^{(k),\bm \theta}_{T,r}$, as both quantities appear in the Radon-Nikodym derivative presented in  \eqref{cebolla}. 
\subsubsection{Decomposing $g_{k,T}$ and $\bo Z_T$}

Recall that $\chi_\emptyset$ denotes the life time of the root, which is also the first \emph{branching} or \emph{birth event}. 
Let us now denote by 
\[
\tau_1:=\inf\Big\{t\geq 0:\ \exists\ i,j\in [k]\mbox{ such that }\varsigma^{(i)}_t\neq \varsigma^{(j)}_t \Big\}
\]the first \emph{spine splitting event}. We recall that  $c(\varsigma^{(1)}_t)$ denotes the type of the vertex $\varsigma^{(1)}_t$ which carries mark one at time $t$.

Let $0\le s\le t$ and $I\subset [0,T]$ be either $(s,t)$ or $[s,t)$. We introduce  the following event
\[
B_{I}:=\{ \textrm{only one branching event occurs in  $I$}\}.
\] Let $\overline{s}=s$ or $s-$ accordingly as $I$ is either $(s,t)$ or $[s,t)$. We also recall that ${\rm gen}(v)\in [a,b]$ denotes that the branching event of particle $v$  occurs in $[a, b]$, for any $v$ in the MBGW tree.

Under the event that $\{\tau_1\in I,\mathcal{P}_{\tau_1}=\bo{P},\bo{L}_{\tau_1}=\bm{\ell} , c(\varsigma^{(1)}_{\overline{s}})=i \}\cap B_{I}$, where $\bo{P}$ is a coloured partition and $\bo{L}_{\tau_1}$ denotes the offspring distribution of the vertex $\varsigma^{(1)}_{\tau_1-}$, we have 
\begin{align*}
	g_{k,T} &= \mathbf{1}_{\{\varsigma^{(h_1)}_{\overline{s}}=\varsigma^{(h_2)}_{\overline{s}}, h_1,h_2\in [k]\}}\prod_{(w,c_w)\in {\rm spine}(\varsigma^{(1)}_{\overline{s}})}\left(\frac{\bo{L}_w\cdot \bm{\xi}}{\xi_{c_{w}}}\right)^k\\
	&\hspace{1cm} \times 
\prod_{\substack{m\in [d]\\ g_m\neq 0}}\paren{\frac{ \bm{\ell}\cdot \bm \xi }{\xi_m}}^{\overline{a}_{m}}\prod_{q=1}^{g_m}\mathbf{1}_{\{\varsigma^{(h_1)}_{T}\neq \varsigma^{(h_2)}_{T},\, h_1\neq h_2,h_1,h_2\in A_{m,q}\}}\prod_{h\in A_{m,q}}\prod_{\text{$(w,c_w)\in {\rm spine}(\varsigma^{(h)}_{T})$}\atop\text{${\rm gen}(w)\in[t, T]$}}\frac{ \bo{L}_{w}\cdot \bm \xi }{\xi _{c_w}},
\end{align*}
where the first term, i.e. 
\[
\overline{g}_{k,[0,\overline{s}]}:=\mathbf{1}_{\{\varsigma^{(h_1)}_{\overline{s}}=\varsigma^{(h_2)}_{\overline{s}}, h_1,h_2\in [k]\}}\prod_{(w,c_w)\in {\rm spine}(\varsigma^{(1)}_{\overline{s}})}\left(\frac{\bo{L}_w\cdot \bm{\xi}}{\xi_{c_{w}}}\right)^k,
\]
arises from the fact that all spines are together before time $\overline{s}$. Observe that when $\overline{s}=s-$, then $[0,\overline{s}]=[0,s)$. The second term, i.e.
	\[
	\widetilde{g}_{k, I}:=\prod_{\substack{m\in [d]\\ g_m\neq 0}}\paren{\frac{ \bm{\ell}\cdot \bm \xi}{\xi_m}}^{\overline{a}_{m}}, 
	\]
	follows from the fact that in $I$ there is a branching or spine splitting  event with the partition $\bo{P}=(P_1,\ldots, P_d)$.  Finally the last term is decomposed on the time interval $[t, T]$ where the spines  are distributed, for each $m\in[d]$, according to the blocks  $(A_{m,q})_{q\in [g_m]}$,  i.e.
	\[
	\widehat{g}_{A_{m,q},[t,T]}:=\mathbf{1}_{\{\varsigma^{(h_1)}_{T}\neq \varsigma^{(h_2)}_{T},\ h_1\neq h_2,h_1,h_2\in A_{m,q}\}}\prod_{h\in A_{m,q}}\prod_{\text{$(w,c_w)\in {\rm spine}(\varsigma^{(h)}_{T})$}\atop\text{${\rm gen}(w)\in[t, T]$}}\frac{ \bo{L}_{w}\cdot \bm \xi }{\xi _{c_w}}.
	\]
	In other words, we have
 \begin{equation}\label{eqnDecompositionOfGLemmaOfPartitions}	g_{k,T}=\overline{g}_{k,[0,\overline{s}]} \widetilde{g}_{k,I}\prod_{\substack{m\in [d]\\g_m\neq 0}}\prod_{q=1}^{g_m}\widehat{g}_{A_{m,q},[t,T]}. 
	\end{equation}

In Figure \ref{figFirstSplittingTimeDecomposingG} we present an example illustrating such a decomposition of $g_{k,T}$. 

\begin{center}
	\begin{figure}
		\includegraphics[width=.4\textwidth]{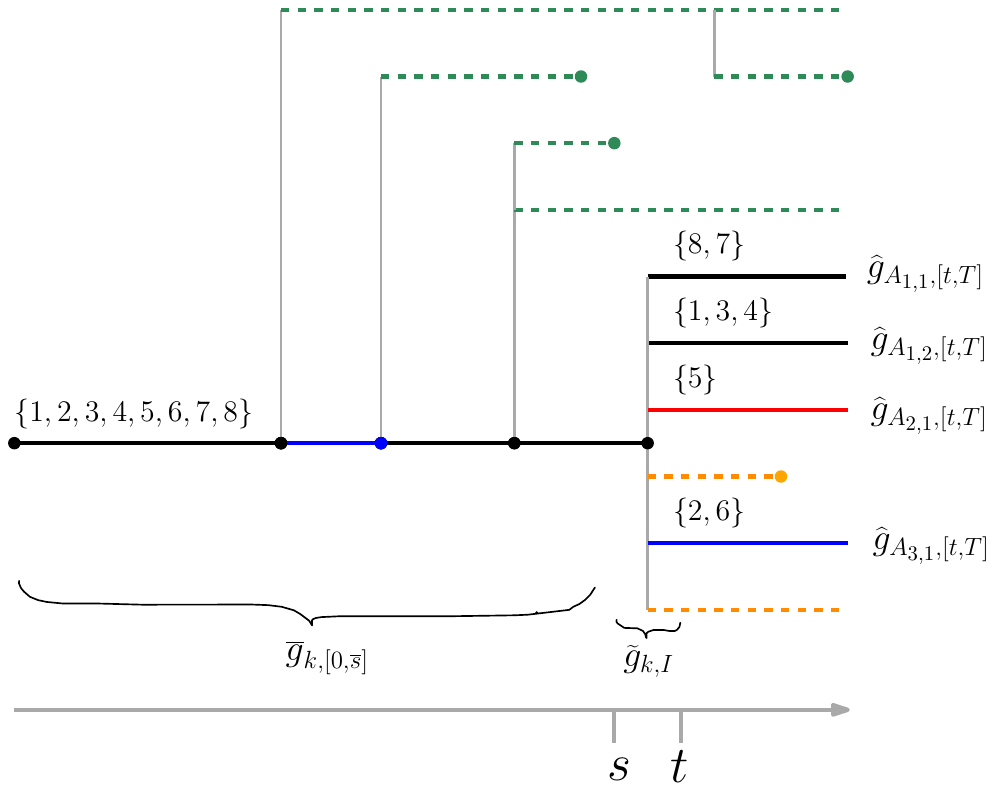}\caption{Decomposition of $g_{k,T}$ for a three type BGW tree with $8$ marks. Individuals type 1 are depicted in Black color, type 2 in Red and type 3 in Blue. 
Subtrees (with no marks) growing from births events occurring before the first spine splitting event, are depicted in green; subtrees growing after the first spine splitting  event, that do not carry a mark, are depicted in orange. Since those subtrees do not carry marks, they do not appear in the decomposition of $g_{k,T}$. 
}\label{figFirstSplittingTimeDecomposingG}
	\end{figure}
\end{center} 
	In the particular case when  $I=(s,T]$ and  $\{\tau_1\in I, c(\varsigma^{(1)}_{s})=i \}$, we have 
\begin{align*}
	g_{k,T} &= \overline{g}_{k,[0,s]}
	\mathbf{1}_{\{\varsigma^{(h_1)}_{T}\neq \varsigma^{(h_2)}_{T},\, h_1\neq h_2,h_1,h_2\in A_{i,1}\}}\prod_{h\in A_{i,1}}\prod_{\text{$(w,c_w)\in {\rm spine}(\varsigma^{(h)}_{T})$}\atop\text{${\rm gen}(w)\in I$}}\frac{ \bo{L}_{w}\cdot \bm \xi }{\xi _{c_w}},
\end{align*}
where the first term arises from the fact that all spines are together before time $s$, while in the second term, the spines remain together up to the first spine splitting  event but  now starting in a specific type $i$. We observe that at time $s$, the partition is such that  $\bo P=(\emptyset, \ldots, P_i, \ldots,\emptyset)$ with  $P_i=\{A_{i,1}\}$ where $A_{i,1}=[k]$. In other words, we have
 \begin{equation}\label{eqnDecompositionOfGLemmaOfPartitions1}	g_{k,T}=\overline{g}_{k,[0,s]} \widehat{g}_{A_{i,1},(s,T]}. 
	\end{equation}

Recall that $\delta_{i, m}=1$ when $m=i$ and 0 elsewhere. Similarly as above, under the event $\{\tau_1\in I,\mathcal{P}_{\tau_1}=\bo{P},\bo{L}_{\tau_1}=\bm{\ell} , c(\varsigma^{(1)}_{\overline{s}})=i \}\cap B_{I}$, we rewrite $\bo{Z}_T$ as follows
\begin{equation}\label{decompZ}
\begin{split}
	\bo{Z}_T=\sum_{m=1}^d\sum_{n=1}^{Z^{(m)}_{\overline{s}}-\delta_{i,m}}\overline{\bo Z}_{[t,T],m,n}+\sum_{m=1}^d\sum_{n=1}^{g_m}\bo Z_{[t,T],m,n}+\sum_{m=1}^d\sum_{n=1}^{\ell_m-g_m}\widetilde {\bo Z}_{[t,T],m,n},
\end{split}
\end{equation}where the first term represents the  $\bo Z_{\overline{s}}-\delta_{i,m}$ individuals alive at time $\overline{s}$ which generate the subpopulations $(\overline{\bo Z}_{[t,T],m,n})_{m\in[d], n\ge 1}$, where  $\overline{\bo Z}_{[t,T],m,n}$ denotes the  contribution at time $T$ from the $n$-th individual of type $m$ alive at time $t$. 
Notably, each subpopulation $\overline{\bo Z}_{[t,T],m,n}$ follows the same distribution as $\bo Z_{T-t}$ under $\mathbb{P}_{m}$. Note that  the spine, which has type $i$ at time $\overline{s}$,  does not contribute to the first term and must therefore be excluded from the total population size $\bo Z_{\overline{s}}$.

 The second term accounts for the population at time $T$ that descends  from  individuals alive at  time $t$ who carry  at least one mark. Each subpopulation $\bo Z_{[t,T],m,n}$ has the same law as $\bo Z_{T-t}$ under $\mathbb{P}^{(a_{m,n})}_m$. Finally, the last term corresponds   to individuals born at the splitting event who are alive at  $t$ and  carry  no marks.
Each subpopulation $\widetilde{\bo Z}_{[t,T],m,n}$ has the same law as $\bo Z_{T-t}$ under $\mathbb{P}_{m}$.
See Figure \ref{figFirstSplittingTimeDecomposingZ} for an illustrative example of such a decomposition of $\bo{Z}_T$. 

\begin{center}
	\begin{figure}
		\includegraphics[width=.4\textwidth]{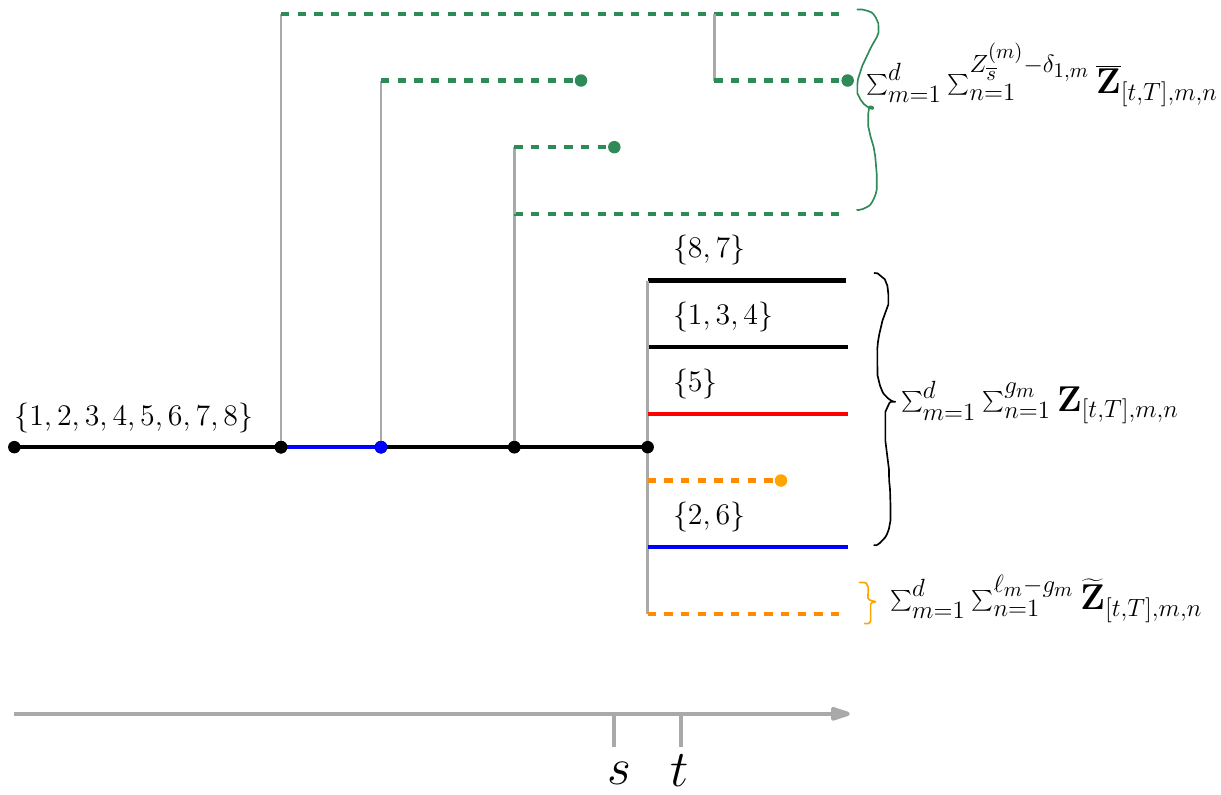}\caption{Decomposition of $\bo Z_T$ for a three type BGW tree with $8$ marks. Individuals type 1 are depicted in Black color, type 2 in Red and type 3 in Blue. 
Subtrees (with no marks) growing from birth events   occurring before the first spine splitting event, are depicted in green; subtrees growing after the first spine splitting  event, that do not carry a mark, are depicted in orange. 
}\label{figFirstSplittingTimeDecomposingZ}
	\end{figure}
\end{center}

When $I=(s,T]$ and  $\{\tau_1\in I,\, c(\varsigma^{(1)}_s)=i \}$,  the decomposition of $\bo Z_T$ is such that
\begin{equation}\label{impdecomp1}
\bo Z_{T}=\sum_{m=1}^d\sum_{n=1}^{Z^{(m)}_{s}-\delta_{i,m}}\overline{\bo Z}_{(s,T],m,n}+ \bo Z_{(s,T],i,1}.
\end{equation} 
  Indeed in this case, in the same spirit as in \eqref{decompZ}, all $g_m$ and $\ell_m$ equals 0 except $g_i=\ell_i=1$, for $m\in [d]$. Moreover since there is no splitting event by time $s$, all contributions $\widetilde {\bo Z}_{(s,T],m,n}$ equals 0 and $\bo Z_{(s,T],i,1}$ carries all marks.\\

\subsubsection{First birth time, spine splitting event and birth-off the spine}

The identities in \eqref{eqnDecompositionOfGLemmaOfPartitions1}	and \eqref{impdecomp1} allow us to deduce the following lemma (with $s=t$), which computes the probability of the following two events: that no births occur along the spine by time $t$, and that the spines remain together by time $t$. Our argument follows similar reasonings as  in Lemma 9 of \cite{MR4133376},  we include their proofs  for completeness.

\begin{lemma}\label{lemmaTailOfPsi_1}For any $t\in (0,T)$ and $i\in [d]$, we have
\[	
\Q^{(k),\bm \theta}_{T,r}\paren{\chi_\emptyset>t}=\frac{\E_r\left[N^{\floor{k}}_{T-t}e^{-\bm \theta\cdot \bo Z_{T-t}}\right]}{\E_r\left[N^{\floor{k}}_{T}e^{-\bm \theta\cdot \bo Z_{T}}\right]}e^{-\alpha_rt}
\]and 
\begin{align*}
& \Q^{(k),\bm \theta}_{T,r}\left(c(\varsigma_t^{(1)})=i,\tau_1>t\right)
 = \frac{\E_i\left[N^{\floor{k}}_{T-t}e^{-\bm \theta\cdot \bo Z_{T-t}}\right]}{\E_r\left[N^{\floor{k}}_{T}e^{-\bm \theta\cdot \bo Z_{T}}\right]}\frac{\E_r\left[Z^{(i)}_{t}\prod_{j\in [d]}\E_j\left[e^{-\bm \theta\cdot \bo Z_{T-t}}\right]^{Z^{(j)}_{t}}\right]}{\E_i\left[e^{-\bm \theta\cdot \bo Z_{T-t}}\right]}.
\end{align*}
\end{lemma}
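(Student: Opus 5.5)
Both identities will be computed directly from the Radon--Nikodym derivative \eqref{defMeasureQkTrGivenAllInfo}: write $\Q^{(k),\bm\theta}_{T,r}(A)=\E^{(k)}_r[\mathbf 1_A g_{k,T}e^{-\bm\theta\cdot\bo Z_T}]/\E_r[N^{\floor{k}}_Te^{-\bm\theta\cdot\bo Z_T}]$. Then condition on $\F^{(k)}_t$ and use the Markov branching property of $\p^{(k)}_r$ at time $t$, together with the decompositions \eqref{eqnDecompositionOfGLemmaOfPartitions1} and \eqref{impdecomp1} with $s=t$.

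\emph{First identity.} On $\{\chi_\emptyset>t\}$ the root is still alive at time $t$, has type $r$, carries all $k$ marks, and no branching has yet occurred. Hence $\overline g_{k,[0,t]}=1$, since the spine product is empty, and $\bo Z_T=\bo Z_{(t,T],r,1}$, because $Z_t=\bo e_r$ and so the first sum in \eqref{impdecomp1} is empty. Conditionally on $\F^{(k)}_t$, the factor $\widehat g_{[k],(t,T]}e^{-\bm\theta\cdot\bo Z_{(t,T],r,1}}$ has the law of $g_{k,T-t}e^{-\bm\theta\cdot\bo Z_{T-t}}$ under $\p^{(k)}_r$. By \eqref{tomate}, its conditional expectation is therefore $\E_r[N^{\floor{k}}_{T-t}e^{-\bm\theta\cdot\bo Z_{T-t}}]$. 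Multiplying by $\p_r(\chi_\emptyset>t)=e^{-\alpha_r t}$ gives the first formula.

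\emph{Second identity.} On $\{\tau_1>t,\ c(\varsigma^{(1)}_t)=i\}$, all $k$ marks sit on a single particle $u\in\mathcal N_t$ of type $i$. We use \eqref{eqnDecompositionOfGLemmaOfPartitions1}: $g_{k,T}=\overline g_{k,[0,t]}\,\widehat g_{[k],(t,T]}$. By \eqref{impdecomp1}, $\bo Z_T$ splits into the subtree of $u$ plus the independent subtrees of the other $Z^{(j)}_t-\delta_{i,j}$ individuals of type $j$. We then condition on $\F^{(k)}_t$, which makes these subtrees independent.
\begin{itemize}
\item The factor coming from $u$ contributes $\E_i[N^{\floor{k}}_{T-t}e^{-\bm\theta\cdot\bo Z_{T-t}}]$, again by \eqref{tomate}.
\item The unmarked subtrees contribute $\prod_j\E_j[e^{-\bm\theta\cdot\bo Z_{T-t}}]^{Z^{(j)}_t-\delta_{i,j}}$.
\end{itemize}
It remains to compute
\[
\E^{(k)}_r\Big[\mathbf 1_{\{\tau_1>t,\,c(\varsigma^{(1)}_t)=i\}}\,\overline g_{k,[0,t]}\prod_j\E_j[e^{-\bm\theta\cdot\bo Z_{T-t}}]^{Z^{(j)}_t-\delta_{i,j}}\Big].
\]
Condition now on $\F_t$. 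By \eqref{identitypk} and the construction of $\p^{(k)}_r$, the probability that all $k$ marks follow a given particle $u$ equals $\prod_{(w,c_w)\in{\rm spine}(u)}(\xi_{c_w}/\bo L_w\cdot\bm\xi)^k$. This exactly cancels the weight $\overline g_{k,[0,t]}$, so that
\[
\E^{(k)}_r\big[\mathbf 1_{\{\tau_1>t\}}\mathbf 1_{\{\varsigma^{(1)}_t=u\}}\overline g_{k,[0,t]}\,\big|\,\F_t\big]=1 \qquad\text{for each } u\in\mathcal N_t .
\]
Summing over $u\in\mathcal N_t$ of type $i$ yields a factor $Z^{(i)}_t$. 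Finally, absorbing the $-\delta_{i,j}$ as the division by $\E_i[e^{-\bm\theta\cdot\bo Z_{T-t}}]$ gives the stated expression.

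The step needing the most care is the cancellation of $\overline g_{k,[0,t]}$ against the joint conditional probability that all $k$ marks follow the same path. This uses that the $k$ marks choose independently at each branching, so the probability is the $k$-th power of the single-mark formula \eqref{identitypk}. It also requires justifying the conditional independence of the subtrees rooted at time $t$ under $\p^{(k)}_r$, together with the factorization of $\widehat g$ on the subtree of $u$. Both follow from the Markov branching structure already used in Lemma~\ref{lemmaMrkovPropertyUnderQ}.
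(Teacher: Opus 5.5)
Your proposal is correct and follows essentially the same route as the paper. You write the probability via the Radon--Nikodym derivative and condition on $\F^{(k)}_t$ using \eqref{eqnDecompositionOfGLemmaOfPartitions1}, \eqref{impdecomp1} and \eqref{tomate}; you then condition on $\F_t$, so that $\overline{g}_{k,[0,t]}$ cancels against the $k$-fold selection probability from \eqref{identitypk} and summing over type-$i$ particles leaves the factor $Z^{(i)}_t$, exactly as in the paper.
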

\begin{proof} We first observe, 
\[
\begin{split}
 \Q^{(k),\bm \theta}_{T, r}\paren{\chi_\emptyset>t}
& =\frac{\E^{(k)}_r\left[g_{k,T}e^{-\bm \theta\cdot \bo Z_{T}}\mathbf{1}_{\{\chi_\emptyset>t\}} \right]}{\E^{(k)}_r\left[N^{\floor{k}}_{T}e^{-\bm \theta\cdot \bo Z_{T}}\right]}\\
&= \frac{\E^{(k)}_r\left[\mathbf{1}_{\{\chi_\emptyset>t\}} \E^{(k)}_r\left[g_{k,T}e^{-\bm \theta\cdot \bo Z_{T}} \Big |\F^{(k)}_t \right]\right]}{\E^{(k)}_r\left[N^{\floor{k}}_{T}e^{-\bm \theta\cdot \bo Z_{T}}\right]}\\
&= \frac{\E^{(k)}_r\left[g_{k,T-t}e^{-\bm \theta\cdot \bo Z_{T-t}}\right]}{\E^{(k)}_r\left[N^{\floor{k}}_{T}e^{-\bm \theta\cdot \bo Z_{T}}\right]} \p^{(k)}_r\paren{\chi_\emptyset>t}\\
& =\frac{\E_r\left[N^{\floor{k}}_{T-t}e^{-\bm \theta\cdot \bo Z_{T-t}}\right]}{\E_r\left[N^{\floor{k}}_{T}e^{-\bm \theta\cdot \bo Z_{T}}\right]}e^{-\alpha_rt},
\end{split}
\]
where in the fourth line, we have used that all terms inside $\p^{(k)}_r$ does not depend on the marks so we can replace it by $\p_r$.

For the second part of the statement, we use  decompositions  \eqref{eqnDecompositionOfGLemmaOfPartitions1}	and \eqref{impdecomp1} of  $g_{k,T}e^{-\bm \theta\cdot \bo Z_{T}}$, with $s=t$. 
Thus under the event $\{\tau_1>t,\, c(\varsigma^{(1)}_t)=i\}$ and conditioning on $\F^{(k)}_t$, we have
\[
\begin{split}
	 \E_r\left[N^{\floor{k}}_{T}e^{-\bm \theta\cdot \bo Z_{T}}\right]&\Q^{(k),\bm \theta}_{T, r}\paren{c(\varsigma^{(1)}_t)=i,\tau_1>t}=\E^{(k)}_r\left[g_{k,T}e^{-\bm \theta\cdot \bo Z_{T}}\mathbf{1}_{\{c(\varsigma^{(1)}_t)=i,\tau_1>t \}}\right]\\
	&= \E^{(k)}_r\left[\mathbf{1}_{\{c(\varsigma^{(1)}_t)=i,\tau_1>t\}} \E^{(k)}_r\left[\left.g_{k,T}e^{-\bm \theta\cdot \bo Z_{T}} \right|\F^{(k)}_t \right]\right]\\
	&= \E^{(k)}_r\left[\mathbf{1}_{\{c(\varsigma^{(1)}_t)=i,\tau_1>t\}}\prod_{(w,c_w)\in {\rm spine}(\varsigma^{(1)}_t) }\paren{\frac{\bo L_w\cdot \bm \xi}{\xi_{c_{w}}}}^k \right.\\
	& \qquad \hspace{4cm}\times  \E^{(k)}_r\left[\left.\prod_{m\in [d]}e^{-\sum_{n=1}^{Z^{(m)}_{t}-\delta_{i,m}}\bm \theta\cdot  \overline{\bo Z}_{(t,T],m,n}}\right|\F^{(k)}_t\right] \\ 
	& \quad \left.\times \E^{(k)}_r\left[\left.\mathbf{1}_{\{\varsigma^{(m)}_{T}\neq \varsigma^{(n)}_{T}, m, n\in [k]\}}\prod_{h\in [k]}\prod_{\substack{(w,c_w)\in {\rm spine}(\varsigma^{(h)}_T)\\{\rm gen}(w)\in (t,T]}}\frac{\bo L_w\cdot \bm \xi}{\xi_{c_{w}}}e^{-\bm \theta\cdot \bo Z_{(t,T],i,1}}\right|\F^{(k)}_t\right]\right]\\
	&= \E^{(k)}_r\left[\mathbf{1}_{\{c(\varsigma^{(1)}_t)=i,\tau_1>t\}}\prod_{(w,c_w)\in {\rm spine}(\varsigma^{(1)}_t) }\paren{\frac{\bo L_w\cdot \bm \xi}{\xi_{c_{w}}}}^k \right.\\
&\hspace{3cm} \left.\times\prod_{m\in [d]}\E_m\left[e^{-\bm \theta\cdot \bo Z_{T-t}}\right]^{Z^{(m)}_{t}-\delta_{i,m}} \right] \E^{(k)}_i\left[g_{k,T-t}e^{-\bm \theta\cdot \bo Z_{T-t}}\right],
\end{split}
\]
where we recall that $\delta_{i,m}$ equals one when $m=i$ and zero otherwise. 

We now compute the first expectation on the right-hand side. Denote by $\mathcal{N}^{(i)}_{t}$  the set of all  particles alive at time $t$ of type $i$.
Since $\bo Z_{t}$ is measurable w.r.t. $\F_t$, we condition on the latter $\sigma$-algebra to obtain that the first expectation on the right-hand side above equals
\begin{equation}
\begin{split}
\label{eqnChangeFromInfoUpTo_t}
	& \E^{(k)}_r\left[\prod_{m\in[d]}\E_m\left[e^{-\bm \theta\cdot \bo Z_{T-t}}\right]^{Z^{(m)}_{t}-\delta_{i,m}} \E^{(k)}_r\left[\left.\mathbf{1}_{\{c(\varsigma^{(1)}_t)=i,\tau_1>t\}}\prod_{(w,c_w)\in {\rm spine}(\varsigma^{(1)}_t) }\paren{\frac{\bo L_w\cdot \bm \xi}{\xi_{c_{w}}}}^k \right|\F_t\right]\right]\\ 
	& \hspace{7.5cm}= \E_r\left[Z^{(i)}_{t}\prod_{m\in[d]}\E_m\left[e^{-\bm \theta\cdot \bo Z_{T-t}}\right]^{Z^{(m)}_{t}-\delta_{i,m}}\right],
 \end{split}
\end{equation}
where the identity follows from similar arguments as those used above identity \eqref{tomate} where we computed the conditional expectation of $g_{k,t}$ with respect to $\mathcal{F}_t$, but in this case we sum over all  possible spines which are of type $i$ at time $t$. 
Thus putting all pieces together, we deduce
\begin{align*}
	& \Q^{(k),\bm \theta}_{T, r}\paren{c(\varsigma^{(1)}_t)=i,\tau_1>t}= \frac{\E_i\left[N^{\floor{k}}_{T-t}e^{-\bm \theta\cdot \bo Z_{T-t}}\right]}{\E_r\left[N^{\floor{k}}_{T}e^{-\bm \theta\cdot \bo Z_{T}}\right]}\frac{\E_r\left[Z^{(i)}_{t}\prod_{m\in [d]}\E_m\left[e^{-\bm \theta\cdot \bo Z_{T-t}}\right]^{Z^{(m)}_{t}}\right]}{\E_i\left[e^{-\bm \theta\cdot \bo Z_{T-t}}\right]},	
\end{align*}where the last equality follows by observing that  
\[
\E_m\left[e^{-\bm \theta\cdot \bo Z_{T-t}}\right]^{-\delta_{i,m}}\mathbf{1}_{\{m=i\}}=\E_i\left[e^{-\bm \theta\cdot \bo Z_{T-t}}\right]^{-1}\mathbf{1}_{\{m=i\}}.
\] This completes the proof.
\end{proof}

It is important to note that in the single-type case, the following relationship holds
\[
\mathbb{E}\left[Z_T F_{T-t}(e^{-\theta})^{Z_T-1}\right]=F'_t(F_{T-t}(e^{-\theta}))=\frac{F'_T(e^{-\theta})}{F'_{T-t}(e^{-\theta})},
\]
where $F_{T}(e^{-\theta})=\mathbb{E}[e^{-\theta Z_T}]$;
see for instance the proof of Lemma 3.4 in Harris et al. \cite{MR4718398}. In the multitype case a similar identity can be also obtained. Let us define  $F_{t,r}$ as the probability generating function of $\bo Z_{t}$ starting from an individual type $r\in [d]$, i.e. $F_{t,r}(\bo s)=\mathbb{E}_r[{\bo s}^{\bo Z_{t}}]$, for ${\bo s}\in [0,1]^d$. 
Thus, writing $\bm \theta:=\bm \theta (u)$, we have
\begin{equation}\label{eqmultytipeSam}
\begin{split}
\frac{{\rm d}}{{\rm d}u}&F_{T-t, i}(e^{-\bm \theta (u)})\E_r\left[Z^{(i)}_{t}\prod_{m\in [d]}\E_m\left[e^{-\bm \theta(u)\cdot \bo Z_{T-t}}\right]^{Z^{(m)}_{t}-\delta_{m,i}}\right]=\\
&\frac{{\rm d}}{{\rm d}u}F_{t, r}(\Vec{F}_{T-t}(e^{-\bm \theta(u)}))-\sum_{\ell\neq i}\E_r\left[Z^{(\ell)}_{t}\prod_m\E_m\left[e^{-\bm \theta(u)\cdot \bo Z_{T-t}}\right]^{Z^{(m)}_{t}-\delta_{m,\ell}}\right] \frac{{\rm d}}{{\rm d}u}F_{T-t,\ell}(e^{-\bm \theta (u)}),
\end{split}
\end{equation}
where
\begin{equation}\label{eqnDerivativeProbGenFn}
\Vec{F}_{T}(e^{-\bm \theta})=\paren{\E_1\left[e^{-\bm \theta\cdot \bo Z_{T}}\right],\ldots, \E_d\left[e^{-\bm \theta\cdot \bo Z_{T}}\right]}.
\end{equation}For the proof of the previous identity, we refer to the \refappendix{Appendix}.

Our next result computes the number of offspring at {\it births-off the spine}, that is a birth event that occurs along the spines but  do not involve a spine splitting event. This result correspond to  Lemma 10 of \cite{MR4133376}.

Recall that  $\bo{L}_\emptyset$ denotes the offspring distribution of the root and observe that  $c (\varsigma^{(1)}_{t+})$ refers to the type of the first mark immediately after time $t$.

\begin{lemma}\label{lemmatomate}
	Fix $i\in [d]$. For any $\bm{\ell}\in \z^d_+\setminus\{\bo 0\}$ with $\ell_i>0$ and $0< t< T$, we have 
\begin{align*}
	& 	\Q^{(k),\bm \theta}_{T,r}\paren{\bo{L}_\emptyset=\bm{\ell}\left|\chi_\emptyset\in {\rm d} t,\tau_1>t ,c (\varsigma^{(1)}_{t+})=i \right.} = \frac{\ell_ip_r\paren{\bm{\ell}}\prod_{m\in[d]}\E_m\left[e^{-\bm \theta\cdot \bo Z_{T-t}}\right]^{\ell_m}}{
	\mathbb{E}_r\left[L^{(i)}\prod_{m\in[d]}\E_m\left[e^{-\bm \theta\cdot \bo Z_{T-t}}\right]^{L^{(m)}}\right]},
\end{align*}
where $L^{(i)}$ denotes the $i$-th coordinate of the r.v. $\bo{L}$.
\end{lemma}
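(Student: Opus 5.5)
Since the root carries all $k$ marks at $t-$ and no spine splitting happens at $\chi_\emptyset=t$ (because $\tau_1>t$), all marks pass to one child of type $i$. The plan is to compute, for each $\bm\ell$ with $\ell_i>0$, the joint quantity $\Q^{(k),\bm \theta}_{T,r}(\chi_\emptyset\in {\rm d}t,\bo L_\emptyset=\bm\ell,\tau_1>t,c(\varsigma^{(1)}_{t+})=i)$ using the Radon--Nikodym derivative \eqref{cebolla}. We then normalise by summing over $\bm\ell$. Any factor not depending on $\bm\ell$ cancels, so it suffices to identify the $\bm\ell$-dependence up to a constant.

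We write $\E^{(k)}_r[g_{k,T}e^{-\bm\theta\cdot\bo Z_T}\mathbf 1_{\{\cdots\}}]$ and condition on $\F^{(k)}_{t}$, with $t$ just after the root's death. On this event $\bo Z_t=\bm\ell$ and all spines sit on one type-$i$ child. By the decompositions \eqref{eqnDecompositionOfGLemmaOfPartitions1} and \eqref{impdecomp1} (with $s=t$), $g_{k,T}=\big((\bm\ell\cdot\bm\xi)/\xi_i\big)^k\,\widehat g_{[k],(t,T]}$. Also $\bo Z_T$ splits into independent contributions from the $\ell_m-\delta_{i,m}$ unmarked children of type $m$ and from the marked child. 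The Markov branching property under $\p^{(k)}_r$ then gives the conditional expectation
\[
\Big(\frac{\bm\ell\cdot\bm\xi}{\xi_i}\Big)^k\prod_{m\in[d]}\E_m\big[e^{-\bm\theta\cdot\bo Z_{T-t}}\big]^{\ell_m-\delta_{i,m}}\,\E_i\big[N^{\floor k}_{T-t}e^{-\bm\theta\cdot\bo Z_{T-t}}\big],
\]
where the last factor comes from \eqref{tomate}.

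Next we multiply by the $\p^{(k)}_r$-probability of the event. That event is: the root dies in ${\rm d}t$, with probability $\alpha_r e^{-\alpha_r t}{\rm d}t$; it has $\bm\ell$ offspring, with probability $p_r(\bm\ell)$; and all $k$ marks choose the same type-$i$ child. By item 4 in the construction of $\p^{(k)}_r$, the last probability is $\ell_i\,\big(\xi_i/(\bm\ell\cdot\bm\xi)\big)^k(1/\ell_i)^{k}$ summed over the $\ell_i$ choices of child, which equals $\ell_i\,(\xi_i/(\bm\ell\cdot\bm\xi))^k\ell_i^{-k}\cdot$. The point to be careful about is the exact power of $\ell_i$. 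Each mark chooses a specific child with probability $\xi_i/(\bm\ell\cdot\bm\xi)$, since $\ell_i\xi_i/(\bm\ell\cdot\bm\xi)\cdot 1/\ell_i$. So all $k$ marks choose a given child with probability $(\xi_i/\bm\ell\cdot\bm\xi)^k$, and summing over the $\ell_i$ children gives $\ell_i(\xi_i/\bm\ell\cdot\bm\xi)^k$. The $\xi$-factors then cancel exactly against the $g$-factor, leaving the weight $\ell_i p_r(\bm\ell)$.

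Multiplying, the $\bm\ell$-dependent part is $\ell_i p_r(\bm\ell)\prod_m\E_m[e^{-\bm\theta\cdot\bo Z_{T-t}}]^{\ell_m}$, after absorbing the $\bm\ell$-independent factor $\E_i[e^{-\bm\theta\cdot\bo Z_{T-t}}]^{-1}$. The remaining factors $\alpha_re^{-\alpha_rt}$, $\E_i[N^{\floor k}_{T-t}\cdots]$ and the normaliser are also independent of $\bm\ell$. Dividing by the sum over $\bm\ell$ yields the denominator $\E_r[L^{(i)}\prod_m\E_m[e^{-\bm\theta\cdot\bo Z_{T-t}}]^{L^{(m)}}]$ and proves the claim. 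The main obstacle is justifying the factorisation of the conditional expectation at the random time $\chi_\emptyset$, i.e. using the Markov branching property of $\p^{(k)}_r$ at the first branching time. To handle it, I would condition on $\F^{(k)}_{\chi_\emptyset}$ via the strong Markov property, or equivalently compute with $\chi_\emptyset\in[t,t+\varepsilon)$ on $B_{[t,t+\varepsilon)}$ and let $\varepsilon\to0$, as in Lemma \ref{lemmaTailOfPsi_1}.
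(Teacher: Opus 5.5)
Your proposal is correct and follows essentially the same route as the paper. You write the joint quantity through $g_{k,T}e^{-\bm\theta\cdot\bo Z_T}$ and factor it at the root's death using the decompositions of $g_{k,T}$ and $\bo Z_T$, the Markov branching property and \eqref{tomate}. You then multiply by the $\p^{(k)}_r$-probability $\alpha_r e^{-\alpha_r t}p_r(\bm\ell)\,\ell_i\big(\xi_i/(\bm\ell\cdot\bm\xi)\big)^k$, so the $\bm\xi$-weights cancel, normalise over $\bm\ell$, and handle the random time exactly as the paper does, via $\chi_\emptyset\in[t,t+\epsilon)$ on $B_{[t,t+\epsilon)}$ with $\epsilon\to0$. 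The only blemish is your first-stated expression $\ell_i\big(\xi_i/(\bm\ell\cdot\bm\xi)\big)^k\ell_i^{-k}$, which is wrong, but you then replace it with the correct count $\ell_i\big(\xi_i/(\bm\ell\cdot\bm\xi)\big)^k$, matching the paper.
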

\begin{proof}
Let  $\epsilon>0$ be small enough. From  the definition of $\Q^{(k),\bm \theta}_{T,r}$, we first note that 
\[
\begin{split}
	 	\Q^{(k),\bm \theta}_{T,r}&\paren{\bo{L}_\emptyset=\bm{\ell}\left|\chi_\emptyset\in [t, t+\epsilon),\tau_1>t +\epsilon ,c (\varsigma^{(1)}_{t+\epsilon})=i \right.} \\
		&\hspace{3cm}= \frac{\E^{(k)}_r\left[g_{k,T}e^{-\bm \theta\cdot \bo Z_{T}}\mathbf{1}_{\{\bo{L}_\emptyset=\bm{\ell}, \chi_\emptyset\in [t, t+\epsilon),\tau_1>t +\epsilon,c (\varsigma^{(1)}_{t+\epsilon})=i\}}\right]}{\E^{(k)}_r\left[g_{k,T}e^{-\bm \theta\cdot \bo Z_{T}}\mathbf{1}_{ \{\chi_\emptyset\in [t, t+\epsilon),\tau_1>t +\epsilon ,c (\varsigma^{(1)}_{t+\epsilon})=i\}}\right]}.
\end{split}
\]
Let us first study  the numerator. We proceed similarly as in \eqref{eqnDecompositionOfGLemmaOfPartitions} but under the first branching event instead of the first spine splitting event.  Thus under
the event 
\[
\{\chi_\emptyset\in [t, t+\epsilon), \bo{L}_\emptyset=\bm{\ell}, \tau_1>t +\epsilon, c (\varsigma^{(1)}_{t+\epsilon})=i\}\cap B_{[t,t+\epsilon)},
\]  
the term $g_{k,T}$ can be rewritten as follows
\[
	g_{k,T}=\left(\frac{\bm{\ell}\cdot \bm \xi}{\xi _{i}}\right)^k\mathbf{1}_{\{\varsigma^{(m)}_T\neq \varsigma^{(n)}_T,\, m, n\in[k]\}}\prod_{h\in[k]}\prod_{\substack{(w,c_w)\in {\rm spine}(\varsigma^{(h)}_T),\\{\rm gen}(w)\in [t+\epsilon,T]}}\frac{\bo{L}_w\cdot \bm \xi}{\xi _{c_w}}.
\]
Moreover,   under the same event, we observe that  the r.v. $\bo Z_{T}$, given $\F_{t+\epsilon}^{(k)}$, can be decomposed as in \eqref{impdecomp1} but with $s=t+\epsilon$.
Thus, under  $\{\bo{L}_\emptyset=\bm{\ell}, \chi_\emptyset\in[t,t+\epsilon),\tau_1>t +\epsilon ,c (\varsigma^{(1)}_{t+\epsilon})=i\}$, we have
\begin{align*}
	& \E^{(k)}_r\left[\left.g_{k,T}e^{-\bm \theta\cdot \bo Z_{T}}\right|\F^{(k)}_{t+\epsilon}\right] = \mathbf{1}_{B_{[t,t+\epsilon)}}\left(\frac{\bm{\ell}\cdot \bm \xi}{\xi _{i}}\right)^k\E^{(k)}_i\left[g_{k,T-t-\epsilon}e^{-\bm \theta\cdot \bo Z_{T-t-\epsilon}}\right]\\
	&\hspace{7cm}\times\prod_{m\in[d]}\E_m\left[e^{-\bm \theta\cdot \bo Z_{T-t-\epsilon}}\right]^{\ell_m-\delta_{i,m}} +o(\epsilon),
\end{align*}
where the term $o(\epsilon)$ corresponds to the contribution of the expectation of the r.v. $g_{k,T} e^{-\bm \theta\cdot \bo Z_{T}}$ under the event that more than one branching event have occur.

In other words from Lemma \eqref{lemmaMrkovPropertyUnderQ}, we get
\[
\begin{split}
	 \E^{(k)}_r&\left[g_{k,T}e^{-\bm \theta\cdot \bo Z_{T}}\mathbf{1}_{\{\bo{L}_\emptyset=\bm{\ell}, \chi_\emptyset\in[t,t+\epsilon),\tau_1>t +\epsilon ,c (\varsigma^{(1)}_{t+\epsilon})=i\}}\right]\\
	 &\hspace{2cm} =  \E^{(k)}_r\left[\mathbf{1}_{\{\bo{L}_\emptyset=\bm{\ell}, \chi_\emptyset\in[t,t+\epsilon),\tau_1>t +\epsilon ,c (\varsigma^{(1)}_{t+\epsilon})=i\}}\E^{(k)}_r\left[\left.g_{k,T}e^{-\bm \theta\cdot \bo Z_{T}}\right|\F^{(k)}_{t+\epsilon}\right]\right]\\
	&\hspace{2cm} = \left(\frac{\bm{\ell}\cdot \bm \xi}{\xi _{i}}\right)^k\E^{(k)}_i\left[g_{k,T-t-\epsilon}e^{-\bm \theta\cdot \bo Z_{T-t-\epsilon}}\right]\prod_{m\in[d]}\E_m\left[e^{-\bm \theta\cdot \bo Z_{T-t-\epsilon}}\right]^{\ell_m-\delta_{i,m}}\\
	& \qquad\hspace{1.5cm} \times \p^{(k)}_r\paren{\bo{L}_\emptyset=\bm{\ell}, \chi_\emptyset\in[t,t+\epsilon),\tau_1>t +\epsilon ,c (\varsigma^{(1)}_{t+\epsilon})=i, B_{[t,t+\epsilon)}} +o(\epsilon).\\
\end{split}
\]
 Next, we observe that 
\begin{align*}
	 \p^{(k)}_r&\paren{ \bo{L}_\emptyset=\bm{\ell}, \chi_\emptyset\in[t,t+\epsilon),\tau_1>t +\epsilon ,c (\varsigma^{(1)}_{t+\epsilon})=i, B_{[t,t+\epsilon)}}\\
	&\hspace{1.5cm} =\E^{(k)}_r\left[\mathbf{1}_{\{\bo{L}_\emptyset=\bm{\ell}, B_{[t,t+\epsilon)}, \chi_\emptyset\in[t,t+\epsilon) \}}\p^{(k)}_r\paren{\left.\varsigma^{(1)}_{t+\epsilon}=\cdots =\varsigma^{(k)}_{t+\epsilon},c (\varsigma^{(1)}_{t+\epsilon})=i \right|\ \F_{t+\epsilon}}\right]\\
	&\hspace{1.5cm}=e^{-\alpha_rt}(1-e^{-\alpha_r\epsilon})p_r(\bm{\ell})\paren{\prod_{h=1}^k\frac{\ell_{i}\xi_i}{\bm{\ell}\cdot\bm{\xi}}}\paren{\sum_{j=1}^{\ell_i}\frac{1}{\ell_{i}}}\paren{\frac{1}{\ell_{i}}}^{k-1}\\
	&\hspace{1.5cm}=e^{-\alpha_rt}(1-e^{-\alpha_r\epsilon})p_r(\bm{\ell})\paren{\frac{\ell_{i}\xi_i}{\bm{\ell}\cdot\bm{\xi}}}^k\paren{\frac{1}{\ell_{i}}}^{k-1}\\
	&\hspace{1.5cm}=e^{-\alpha_rt}(1-e^{-\alpha_r\epsilon})p_r(\bm{\ell})\paren{\frac{\xi_i}{\bm{\ell}\cdot\bm{\xi}}}^k \ell_{i}.
\end{align*}Putting all pieces together, we deduce
\begin{equation}\label{eqnTomate1}
\begin{split}
	 \E^{(k)}_r&\left[g_{k,T}e^{-\bm \theta\cdot \bo Z_{T}}\mathbf{1}_{\{\bo{L}_\emptyset=\bm{\ell}, \chi_\emptyset\in[t,t+\epsilon),\tau_1>t +\epsilon ,c (\varsigma^{(1)}_{t+\epsilon})=i\}}\right]\\	& 
	= \left(\frac{\bm{\ell}\cdot \bm \xi}{\xi _{i}}\right)^k\E^{(k)}_i\left[g_{k,T-t-\epsilon}e^{-\bm \theta\cdot \bo Z_{T-t-\epsilon}}\right]\prod_{m\in[d]}\E_m\left[e^{-\bm \theta\cdot \bo Z_{T-t-\epsilon}}\right]^{\ell_m-\delta_{i,m}}\\
	& \qquad\hspace{5cm} \times e^{-\alpha_rt}(1-e^{-\alpha_r\epsilon})p_r(\bm{\ell})\paren{\frac{\xi_i}{\bm{\ell}\cdot\bm{\xi}}}^k \ell_{i}+o(\epsilon)\\
	&= \E^{(k)}_i\left[g_{k,T-t-\epsilon}e^{-\bm \theta\cdot \bo Z_{T-t-\epsilon}}\right]\prod_{m\in[d]}\E_m\left[e^{-\bm \theta\cdot \bo Z_{T-t-\epsilon}}\right]^{\ell_m-\delta_{i,m}}e^{-\alpha_rt}(1-e^{-\alpha_r\epsilon})p_r(\bm{\ell}) \ell_{i}+o(\epsilon)\\
	& 
	= e^{-\alpha_rt}(1-e^{-\alpha_r\epsilon})\E_i\left[N^{\floor{k}}_{T-t-\epsilon}e^{-\bm \theta\cdot \bo Z_{T-t-\epsilon}}\right]\prod_{m\in[d]}\E_m\left[e^{-\bm \theta\cdot \bo Z_{T-t-\epsilon}}\right]^{\ell_m-\delta_{i,m}}p_r(\bm{\ell}) \ell_{i}+o(\epsilon).
\end{split}
\end{equation}
Note that the first expectation on the right-hand side does not depend on $\bm{\ell}$. 
Thus, performing similar steps for the denominator,  we get
\[
\begin{split}
	\Q^{(k),\bm \theta}_{T,r}&\paren{\bo{L}_\emptyset=\bm{\ell}\left|\chi_\emptyset\in [t, t+\epsilon),\tau_1>t +\epsilon ,c (\varsigma^{(1)}_{t+\epsilon})=i \right.}\\
	&\hspace{1cm} =\frac{\prod_{m\in[d]}\E_m\left[e^{-\bm \theta\cdot \bo Z_{T-t-\epsilon}}\right]^{\ell_m-\delta_{i,m}}p_r(\bm{\ell}) \ell_{i}}{
	\sum_{\bo j:j_i>0}j_ip_r(\bo{j})\prod_{m\in[d]}\E_m\left[e^{-\bm \theta\cdot \bo Z_{T-t-\epsilon}}\right]^{j_m-\delta_{i,m}}+o(\epsilon)}+o(\epsilon).
\end{split}
\]
The proof is completed once we take the limit in both sides of the identity as $\epsilon\to 0$ and using the fact that $ \bo Z$ is a Feller process with  c\`adl\`ag paths.
\end{proof}

A similar computation gives us the law of the time that a birth-off the spine occurs, that is, a time where a particle carrying $k$ marks, gives birth to $\bm \ell$ individuals, and all the marks follow the same individual type $i$. 

\begin{lemma}\label{lemmaBirthsOffTheSpine}
	Fix $i\in [d]$. For any $\bm{\ell}\in \z^d_+\setminus\{\bo 0\}$ with $\ell_i>0$ and $0< t<T$, we have
	\begin{align*}
		\Q^{(k),\bm \theta}_{T,r}&\paren{\chi_\emptyset\in {\rm d}t,\bo{L}_{\emptyset}=\bm{\ell}\left| c (\varsigma^{(1)}_{t+})=i , \tau_1>t\right. }= \frac{\ell_ip_r(\bm{\ell})\paren{\prod_{m\in[d]}\E_m\left[e^{-\bm \theta\cdot \bo Z_{T-t}}\right]^{\ell_m}}	}{\E_r\left[Z^{(i)}_{t}\prod_{m\in[d]}\E_m\left[e^{-\bm \theta\cdot \bo Z_{T-t}}\right]^{Z^{(m)}_{t}}\right]}\alpha_re^{-\alpha_rt}{\rm d}t.	
	\end{align*}
\end{lemma}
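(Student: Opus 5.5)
The plan is to write the conditional density as a ratio. The numerator will be the joint density
\[
\Q^{(k),\bm \theta}_{T,r}\big(\chi_\emptyset\in {\rm d}t,\bo L_\emptyset=\bm\ell,\tau_1>t,c(\varsigma^{(1)}_{t+})=i\big),
\]
and the denominator will be $\Q^{(k),\bm \theta}_{T,r}(c(\varsigma^{(1)}_{t+})=i,\tau_1>t)$, which Lemma \ref{lemmaTailOfPsi_1} already gives. One point needs care: Lemma \ref{lemmaTailOfPsi_1} is stated with $c(\varsigma^{(1)}_t)$ rather than $c(\varsigma^{(1)}_{t+})$. Since $\{\chi_\emptyset\in{\rm d}t\}$ carries infinitesimal mass, the two conditioning events coincide up to a null set. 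I would therefore read the conditioning as in the lemma and use its right-hand side for the denominator, evaluated at the fixed time $t$ (or as the $\epsilon\to0$ limit of the same expression at $t+\epsilon$, which is continuous in time).

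For the numerator, the key step is to reuse computation \eqref{eqnTomate1} from the proof of Lemma \ref{lemmatomate}. Divided by $\E_r[N^{\floor{k}}_Te^{-\bm\theta\cdot\bo Z_T}]$, it gives
\[
\Q^{(k),\bm \theta}_{T,r}\big(\bo L_\emptyset=\bm\ell,\chi_\emptyset\in[t,t+\epsilon),\tau_1>t+\epsilon,c(\varsigma^{(1)}_{t+\epsilon})=i\big)
\]
as
\[
\frac{e^{-\alpha_rt}(1-e^{-\alpha_r\epsilon})\,\E_i[N^{\floor{k}}_{T-t-\epsilon}e^{-\bm\theta\cdot\bo Z_{T-t-\epsilon}}]\,\prod_{m}\E_m[e^{-\bm\theta\cdot\bo Z_{T-t-\epsilon}}]^{\ell_m-\delta_{i,m}}\,p_r(\bm\ell)\ell_i}{\E_r[N^{\floor{k}}_Te^{-\bm\theta\cdot\bo Z_T}]}+o(\epsilon).
\]
I would divide by $\epsilon$ and let $\epsilon\to0$. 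This yields the density
\[
\alpha_re^{-\alpha_rt}\,\ell_ip_r(\bm\ell)\,\frac{\E_i[N^{\floor{k}}_{T-t}e^{-\bm\theta\cdot\bo Z_{T-t}}]}{\E_r[N^{\floor{k}}_Te^{-\bm\theta\cdot\bo Z_T}]}\,\frac{\prod_m\E_m[e^{-\bm\theta\cdot\bo Z_{T-t}}]^{\ell_m}}{\E_i[e^{-\bm\theta\cdot\bo Z_{T-t}}]}.
\]
The limit is justified by right-continuity in time of $s\mapsto\E_m[e^{-\bm\theta\cdot\bo Z_s}]$ and of $s\mapsto\E_i[N^{\floor k}_se^{-\bm\theta\cdot\bo Z_s}]$. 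These follow from the Feller property and c\`adl\`ag paths, exactly as at the end of Lemma \ref{lemmatomate}, together with dominated convergence. For the second function the domination $N^{\floor k}e^{-\bm\theta\cdot\bo Z}\le C_{\bm\theta}$ holds when $\bm\theta$ has positive entries. If some entry of $\bm\theta$ vanishes, I would use monotone or Fatou arguments, or simply assume finiteness as implicitly done throughout.

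Dividing the numerator by the expression from Lemma \ref{lemmaTailOfPsi_1}, the factors $\E_i[N^{\floor{k}}_{T-t}e^{-\bm\theta\cdot\bo Z_{T-t}}]$, $\E_r[N^{\floor{k}}_Te^{-\bm\theta\cdot\bo Z_T}]$ and $\E_i[e^{-\bm\theta\cdot\bo Z_{T-t}}]^{-1}$ cancel. What remains is
\[
\frac{\ell_ip_r(\bm\ell)\prod_m\E_m[e^{-\bm\theta\cdot\bo Z_{T-t}}]^{\ell_m}}{\E_r\big[Z^{(i)}_t\prod_m\E_m[e^{-\bm\theta\cdot\bo Z_{T-t}}]^{Z^{(m)}_t}\big]}\,\alpha_re^{-\alpha_rt}{\rm d}t,
\]
which is the claim.

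The main obstacle is the $o(\epsilon)$ bookkeeping, which must be controlled uniformly after dividing by $\epsilon$: the contributions of two or more branchings in $[t,t+\epsilon)$ have to be $o(\epsilon)$ even after the $g_{k,T}$ weighting. I would handle this by conditioning on $\F^{(k)}_{t+\epsilon}$ and using Lemma \ref{lemmaMrkovPropertyUnderQ}. This bounds the weighted contribution by a constant times $\p_r(\text{at least two branchings in }[t,t+\epsilon))$, which is $O(\epsilon^2)$ on the event that the root has not yet died. The remaining pieces are routine.
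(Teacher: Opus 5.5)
Your proposal is correct and follows the paper's proof. Both arguments divide the joint quantity from \eqref{eqnTomate1} by the second formula of Lemma~\ref{lemmaTailOfPsi_1}, cancel the common factors, and let $\epsilon\to0$ using the Feller/c\`adl\`ag property. There are three small nuances. First, the paper evaluates the denominator at $t+\epsilon$, so $\E_i[N^{\floor{k}}_{T-t-\epsilon}e^{-\bm\theta\cdot\bo Z_{T-t-\epsilon}}]$ cancels exactly before the limit and no continuity of that unbounded functional is needed. Second, the continuity you invoke is really from the left in $s=T-t-\epsilon$, which is fine since $\bo Z$ has no fixed-time jumps. Third, without a finite offspring mean the two-branching probability is only $o(\epsilon)$ rather than $O(\epsilon^2)$, which is still all you need.
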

\begin{proof}
Let $\epsilon$ be small enough. Using the definition of   $\Q^{(k),\bm \theta}_{T,r}$ and the second part of Lemma \ref{lemmaTailOfPsi_1}, we get
\[
\begin{split}
	 \Q^{(k),\bm \theta}_{T,r}&\paren{\chi_\emptyset\in[t,t+\epsilon),\bo{L}_{\emptyset}=\bm{\ell}\Big| c (\varsigma^{(1)}_{t+\epsilon})=i , \tau_1>t+\epsilon }\\
	&\hspace{2.5cm} = \frac{\E^{(k)}_r\left[g_{k,T}e^{-\bm \theta\cdot \bo Z_{T}}\mathbf{1}_{\{\chi_\emptyset\in[t,t+\epsilon),\bo{L}_{\emptyset}=\bm{\ell}, c (\varsigma^{(1)}_{t+\epsilon})=i , \tau_1>t +\epsilon \}}\right]}{\E_i\left[N^{\floor{k}}_{T-t-\epsilon}e^{-\bm \theta\cdot \bo Z_{T-t-\epsilon}}\right]\E_r\left[Z^{(i)}_{t+\epsilon}\prod_{m\in [d]}\E_m\left[e^{-\bm \theta\cdot \bo Z_{T-t-\epsilon}}\right]^{Z^{(m)}_{t+\epsilon}-\delta_{i,m}}\right]}.
\end{split}
\]
From the proof of the previous Lemma, the denominator satisfies
\[
\begin{split}
	 \E^{(k)}_r&\left[g_{k,T}e^{-\bm \theta\cdot \bo Z_{T}}\mathbf{1}_{\{\bo{L}_\emptyset=\bm{\ell}, \chi_\emptyset\in[t,t+\epsilon),\tau_1>t +\epsilon ,c (\varsigma^{(1)}_{t+\epsilon})=i\}}\right]\\	& 
	= e^{-\alpha_rt}(1-e^{-\alpha_r\epsilon})\E_i\left[N^{\floor{k}}_{T-t-\epsilon}e^{-\bm \theta\cdot \bo Z_{T-t-\epsilon}}\right]\prod_{m\in[d]}\E_m\left[e^{-\bm \theta\cdot \bo Z_{T-t-\epsilon}}\right]^{\ell_m-\delta_{i,m}}p_r(\bm{\ell}) \ell_{i}+o(\epsilon).
\end{split}
\]
Thus putting all pieces together we deduce
\[
\begin{split}
	 \Q^{(k),\bm \theta}_{T,r}&\left(\chi_\emptyset\in[t,t+\epsilon),\bo{L}_{\emptyset}=\bm{\ell}\Big| c (\varsigma^{(1)}_{t+\epsilon})=i , \tau_1>t+\epsilon \right)\\
	&\hspace{3cm} = \frac{e^{-\alpha_rt}(1-e^{-\alpha_r\epsilon})\prod_{m\in[d]}\E_m\left[e^{-\bm \theta\cdot \bo Z_{T-t-\epsilon}}\right]^{\ell_m-\delta_{i,m}}p_r(\bm{\ell}) \ell_{i}}{\E_r\left[Z^{(i)}_{t+\epsilon}\prod_{m\in [d]}\E_m\left[e^{-\bm \theta\cdot \bo Z_{T-t-\epsilon}}\right]^{Z^{(m)}_{t+\epsilon}-\delta_{i,m}}\right]}+o(\epsilon).
\end{split}
\]
The result then follows by taking the limit as $\epsilon\to 0$ and using the fact that $ \bo Z$ is a Feller process with  c\`adl\`ag paths.\end{proof}
Summing over all the possible values of $\bm \ell$, we obtain the following corollary.

\begin{coro}
The law of the first birth event when all particles follow the same type $i\in [d]$ individual, is given by
\begin{align*}
	\Q^{(k),\bm \theta}_{T,r}&\left(\chi_\emptyset\in {\rm d}t\left| c (\varsigma^{(1)}_{t+})=i , \tau_1>t\right. \right)= \frac{\mathbb{E}_r\left[L^{(i)}\paren{\prod_{m\in[d]}\E_m\left[e^{-\bm \theta\cdot \bo Z_{T-t}}\right]^{L^{(m)}}}\right]}{\E_r\left[Z^{(i)}_{t}\prod_{m\in[d]}\E_m\left[e^{-\bm \theta\cdot \bo Z_{T-t}}\right]^{Z^{(m)}_{t}}\right]}\alpha_re^{-\alpha_rt}{\rm d}t.	
\end{align*}
\end{coro}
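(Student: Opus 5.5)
The corollary follows from Lemma~\ref{lemmaBirthsOffTheSpine} by summing over the offspring configurations. For fixed $i\in[d]$ and $t\in(0,T)$, the events $\{\chi_\emptyset\in {\rm d}t,\bo L_\emptyset=\bm\ell\}$ indexed by $\bm\ell\in\z^d_+\setminus\{\bo 0\}$ are disjoint. Their union, within the conditioning event $\{c(\varsigma^{(1)}_{t+})=i,\tau_1>t\}$, is $\{\chi_\emptyset\in{\rm d}t\}$. Only configurations with $\ell_i>0$ can carry all the marks into a type $i$ child, so the remaining terms have conditional probability zero. Countable additivity of $\Q^{(k),\bm\theta}_{T,r}(\,\cdot\,|\,c(\varsigma^{(1)}_{t+})=i,\tau_1>t)$ then gives
\[
\Q^{(k),\bm \theta}_{T,r}\left(\chi_\emptyset\in {\rm d}t\,\middle|\, c (\varsigma^{(1)}_{t+})=i , \tau_1>t \right)=\sum_{\bm\ell:\,\ell_i>0}\Q^{(k),\bm \theta}_{T,r}\left(\chi_\emptyset\in {\rm d}t,\bo L_\emptyset=\bm\ell\,\middle|\, c (\varsigma^{(1)}_{t+})=i , \tau_1>t \right).
\]

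Next I substitute the density from Lemma~\ref{lemmaBirthsOffTheSpine} into each summand. The denominator $\E_r\bigl[Z^{(i)}_{t}\prod_{m}\E_m[e^{-\bm \theta\cdot \bo Z_{T-t}}]^{Z^{(m)}_{t}}\bigr]$ and the factor $\alpha_re^{-\alpha_rt}{\rm d}t$ do not depend on $\bm\ell$, so they factor out of the sum. What remains is
\[
\sum_{\bm\ell:\,\ell_i>0}\ell_ip_r(\bm\ell)\prod_{m\in[d]}\E_m\bigl[e^{-\bm \theta\cdot \bo Z_{T-t}}\bigr]^{\ell_m}.
\]
The factor $\ell_i$ vanishes when $\ell_i=0$, so I may extend the sum to all $\bm\ell\in\z^d_+$. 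The extended sum is, by definition of $p_r$, exactly $\E_r\bigl[L^{(i)}\prod_{m}\E_m[e^{-\bm\theta\cdot\bo Z_{T-t}}]^{L^{(m)}}\bigr]$, which is the stated numerator.

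The only step needing care is the interchange of sum and density, that is, the passage from infinitesimal densities to their sum. I would justify it by working at the $\epsilon$-level of the proof of Lemma~\ref{lemmaBirthsOffTheSpine}, summing the positive terms there by monotone convergence, and letting $\epsilon\to0$. Finiteness of the summed quantity is not an issue, because $\E_m[e^{-\bm\theta\cdot\bo Z_{T-t}}]<1$ for $\bm\theta\neq\bo 0$, and in any case the sum is dominated by the total conditional probability. Non-negativity of all terms (Tonelli) makes the interchange legitimate.
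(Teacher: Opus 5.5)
Your proposal is correct and is exactly the paper's argument: the paper obtains the corollary simply by summing the density of Lemma~\ref{lemmaBirthsOffTheSpine} over $\bm\ell$. As you do, the $\bm\ell$-independent denominator and the factor $\alpha_r e^{-\alpha_r t}$ come out of the sum, and the terms with $\ell_i=0$ vanish, which leaves $\E_r\bigl[L^{(i)}\prod_{m}\E_m[e^{-\bm\theta\cdot\bo Z_{T-t}}]^{L^{(m)}}\bigr]$.

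One small correction to your justification, which the paper itself passes over: Tonelli does not by itself license exchanging $\lim_{\epsilon\to0}$ with $\sum_{\bm\ell}$, since the per-$\bm\ell$ $o(\epsilon)$ errors need not sum to $o(\epsilon)$. It is cleaner to sum at the $\epsilon$-level first and then pass to the limit by dominated convergence, using for instance $\sup_{\epsilon\le\epsilon_0}\E_i[e^{-\bm\theta\cdot\bo Z_{T-t-\epsilon}}]<1$ for $\bm\theta\neq\bo 0$.
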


\subsubsection{Joint law of first splitting event, the partition and number of  offspring being born}
For the next lemma, we consider  a stochastic process that takes values in the space of coloured partitions. More precisely, we  define $\mathcal{P}:=(\mathcal{P}_t;t\geq 0)$ as the coloured partition process associated with the spines which encodes the grouping  of marks  at any time $t\geq 0$. 
 Also, for any $\bm \ell,\bo g\in \mathbb{Z}^d_+$, we introduce the following notation
\[
\bm \ell^{\floor{\bo g}}:=	\prod_{\substack{m\in [d]\\ g_m\neq 0}}\ell_m^{\floor{g_m}},
\]
where we recall that $\bo g=(g_1, \ldots, g_d)$. We also recall that $\bo{L}_{\tau_1}$ denotes the offspring distribution of the vertex $\varsigma^{(1)}_{\tau_1-}$. The next lemma corresponds to Lemma 3.6 in \cite{MR4718398}.

\begin{lemma}\label{lemmaGeneralRateWhenABranchingEventOccursKnowingInfo}
	 Conditional on $\{\chi_\emptyset>t\}$, the $\Q^{(k),\bm \theta }_{T,r}$-conditional probability that during the time interval $[t,t+\epsilon)$ there are no births-off the spine, the particle  carrying all marks dies and gives birth to $\bm \ell\in \z^d_+\setminus\{\bo 0\} $ offspring, and  the marks are partitioned according to $\bo P$ with block sizes $(a_{m,q})_{m\in[d], q\in [g_m]}$, is given by
	\[
	\begin{split}
		\Q^{(k),\bm \theta }_{T,r}&\left(\left.\tau_1<t+\epsilon,\mathcal{P}_{\tau_1}=\bo{P},\bo{L}_{\tau_1}=\bm{\ell},  B_{[t,t+\epsilon)} \right| \chi_\emptyset>t\right)\\
		&\hspace{.5cm} =\E_r\left[\bo L^{\floor{\bo g}}\prod_{m=1}^d\E_m\left[e^{-\bm \theta \cdot \bo Z_{T-t-\epsilon}}\right]^{L^{(m)}-g_m}\right]\frac{\bm \ell^{\floor{\bo g}} p_r(\bm{\ell})
		\prod_{m=1}^d\E_m\left[e^{-\bm \theta \cdot \bo Z_{T-t-\epsilon}}\right]^{\ell_m-g_m}}{\E_r\left[\bo L^{\floor{\bo g}}\prod_{m=1}^d\E_m\left[e^{-\bm \theta \cdot \bo Z_{T-t-\epsilon}}\right]^{L^{(m)}-g_m}\right]}\\
		& \hspace{5cm} \times
		\frac{\prod_{\substack{m\in [d]\\ g_m\neq 0}}\prod_{q=1}^{g_m}\E_m\left[N^{\floor{a_{m,q}}}_{T-t-\epsilon}e^{-\bm \theta \cdot \bo Z_{T-t-\epsilon}}\right]}{\E_r\left[N^{\floor{k}}_{T-t}e^{-\bm \theta\cdot \bo Z_{T-t}}\right]}\alpha_r \epsilon+o(\epsilon).
	\end{split}
	\]
\end{lemma}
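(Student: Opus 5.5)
Since $\{\chi_\emptyset>t\}\in\F_t$ and the probability $\Q^{(k),\bm\theta}_{T,r}(\chi_\emptyset>t)$ is given by Lemma \ref{lemmaTailOfPsi_1}, I will compute the unconditional quantity
\[
\Q^{(k),\bm\theta}_{T,r}\paren{\tau_1<t+\epsilon,\mathcal P_{\tau_1}=\bo P,\bo L_{\tau_1}=\bm\ell,B_{[t,t+\epsilon)},\chi_\emptyset>t}
=\frac{\E^{(k)}_r\left[g_{k,T}e^{-\bm\theta\cdot\bo Z_T}\mathbf 1_{\{\cdots\}}\right]}{\E_r\left[N^{\floor{k}}_Te^{-\bm\theta\cdot\bo Z_T}\right]}
\]
and then divide by $\Q^{(k),\bm\theta}_{T,r}(\chi_\emptyset>t)$. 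On the event in question, the root is still alive at time $t$ and is the unique individual, all $k$ marks sit on it, and the only branching event in $[t,t+\epsilon)$ is the death of the root, producing $\bm\ell$ offspring among which the marks split according to $\bo P$. I will therefore use the decomposition \eqref{eqnDecompositionOfGLemmaOfPartitions} with $I=[t,t+\epsilon)$ and $\overline s=t-$. In this setting $\overline g_{k,[0,t)}=1$, because the spine has no branching ancestors. Similarly, in \eqref{decompZ} the first sum is empty, since $\bo Z_{t-}=\bo e_r$ and the spine individual is removed. Hence $g_{k,T}e^{-\bm\theta\cdot\bo Z_T}$ factorises as
\[
\widetilde g_{k,I}\prod_{m,q}\widehat g_{A_{m,q},[t+\epsilon,T]}e^{-\bm\theta\cdot\bo Z_{[t+\epsilon,T],m,q}}\prod_{m}\prod_{n=1}^{\ell_m-g_m}e^{-\bm\theta\cdot\widetilde{\bo Z}_{[t+\epsilon,T],m,n}},
\]
up to an $o(\epsilon)$ error coming from two or more branchings in the interval.

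Next I will condition on $\F^{(k)}_{t+\epsilon}$ and use the Markov branching property under $\p^{(k)}_r$, as in the proof of Lemma \ref{lemmaMrkovPropertyUnderQ}. Each marked child of type $m$ carrying $a_{m,q}$ marks contributes $\E^{(a_{m,q})}_m[g_{a_{m,q},T-t-\epsilon}e^{-\bm\theta\cdot\bo Z_{T-t-\epsilon}}]$, which equals $\E_m[N^{\floor{a_{m,q}}}_{T-t-\epsilon}e^{-\bm\theta\cdot\bo Z_{T-t-\epsilon}}]$ by \eqref{tomate}. Each unmarked child of type $m$ contributes $\E_m[e^{-\bm\theta\cdot\bo Z_{T-t-\epsilon}}]$.

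It remains to compute the $\p^{(k)}_r$-probability of the event itself. This probability is
\[
e^{-\alpha_r t}(1-e^{-\alpha_r\epsilon})\,p_r(\bm\ell)
\]
multiplied by the probability that the marks realise exactly the partition $\bo P$. The mark-choice rule gives each labelled mark in a type-$m$ block probability $\frac{\ell_m\xi_m}{\bm\ell\cdot\bm\xi}\cdot\frac1{\ell_m}=\frac{\xi_m}{\bm\ell\cdot\bm\xi}$ of landing on a specified type-$m$ child. For a given labelled partition, the number of ways to place its blocks onto distinct children is $\prod_m\ell_m^{\floor{g_m}}$. So the mark probability is
\[
\bm\ell^{\floor{\bo g}}\prod_m\left(\frac{\xi_m}{\bm\ell\cdot\bm\xi}\right)^{\overline a_m},
\]
which cancels exactly against $\widetilde g_{k,I}=\prod_m\left(\frac{\bm\ell\cdot\bm\xi}{\xi_m}\right)^{\overline a_m}$. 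This bookkeeping is the step I expect to need the most care: one must check that the block labels of $\bo P$ are treated as unordered within each colour, so that the count is $\ell_m^{\floor{g_m}}$ and not $\binom{\ell_m}{g_m}$.

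Collecting everything gives a numerator
\[
e^{-\alpha_rt}\alpha_r\epsilon\,\bm\ell^{\floor{\bo g}}p_r(\bm\ell)\prod_m\E_m\left[e^{-\bm\theta\cdot\bo Z_{T-t-\epsilon}}\right]^{\ell_m-g_m}\prod_{m,q}\E_m\left[N^{\floor{a_{m,q}}}_{T-t-\epsilon}e^{-\bm\theta\cdot\bo Z_{T-t-\epsilon}}\right]+o(\epsilon),
\]
divided by $\E_r[N^{\floor{k}}_Te^{-\bm\theta\cdot\bo Z_T}]$. Dividing by $\Q^{(k),\bm\theta}_{T,r}(\chi_\emptyset>t)$ from Lemma \ref{lemmaTailOfPsi_1} cancels both $e^{-\alpha_rt}$ and $\E_r[N^{\floor{k}}_T e^{-\bm\theta\cdot\bo Z_T}]$, leaving the denominator $\E_r[N^{\floor{k}}_{T-t}e^{-\bm\theta\cdot\bo Z_{T-t}}]$. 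Finally, I multiply and divide by $\E_r[\bo L^{\floor{\bo g}}\prod_m\E_m[e^{-\bm\theta\cdot\bo Z_{T-t-\epsilon}}]^{L^{(m)}-g_m}]$, which puts the expression in the stated form.
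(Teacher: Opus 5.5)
Your proposal is correct and is essentially the paper's argument: it uses the same decompositions \eqref{eqnDecompositionOfGLemmaOfPartitions} and \eqref{decompZ}, the Markov property under $\p^{(k)}_r$ at time $t+\epsilon$ together with \eqref{tomate}, and the cancellation of the mark-partition probability \eqref{eqnProbabilityOfAGivenPartition} against $\widetilde g_{k,I}$. The only difference is bookkeeping: the paper removes the conditioning on $\{\chi_\emptyset>t\}$ by applying Lemma \ref{lemmaMrkovPropertyUnderQ} at time $t$ and then computing under $\Q^{(k),\bm\theta}_{T-t,r}$ on $[0,\epsilon)$, whereas you compute the joint probability directly and divide by the tail in Lemma \ref{lemmaTailOfPsi_1}, which produces the same normalising factor $\E_r[N^{\floor{k}}_{T-t}e^{-\bm\theta\cdot\bo Z_{T-t}}]$.
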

\begin{remark}\label{remark1}
In particular, for a birth-off the spine, that is, if the partition consist of only one type and only one group, say $\bo P=\{P_{j} \}=\{\{A_{j,1}\} \}$, then we have 
\[
\begin{split}
	\Q^{(k),\bm \theta }_{T,r}&\left(\left.\chi_\emptyset<t+\epsilon,\mathcal{P}_{\chi_\emptyset}=\bo{P},\bo{L}_{\emptyset}=\bm{\ell},   B_{[t,t+\epsilon)}\right| \chi_\emptyset>t\right)\\
	& = \ell_jp_r(\bm{\ell})
	\left(\prod_{m=1}^d\E_m\left[e^{-\bm \theta \cdot \bo Z_{T-t-\epsilon}}\right]^{\ell_m-\delta_{m,j}} \right)
	\frac{\E_j\left[N^{\floor{k}}_{T-t-\epsilon}e^{-\bm \theta \cdot \bo Z_{T-t-\epsilon}}\right]}{\E_r\left[N^{\floor{k}}_{T-t}e^{-\bm \theta\cdot \bo Z_{T-t}}\right]}\alpha_r \epsilon+o(\epsilon),
\end{split}
\]
which clearly depends on $k$. 

Moreover, dividing by $\epsilon$ in the identity in Lemma \ref{lemmaGeneralRateWhenABranchingEventOccursKnowingInfo} and then taking $\epsilon$ going to $0$, we may deduce
\[
	\begin{split}
		\Q^{(k),\bm \theta }_{T,r}&\left(\left.\tau_1\in {\rm d} t,\mathcal{P}_{\tau_1}=\bo{P},\bo{L}_{\tau_1}=\bm{\ell} \right| \chi_\emptyset>t\right)\\
		&\hspace{1cm} =\E_r\left[\bo L^{\floor{\bo g}}\prod_{m=1}^d\E_m\left[e^{-\bm \theta \cdot \bo Z_{T-t}}\right]^{L^{(m)}-g_m}\right]\frac{\bm \ell^{\floor{\bo g}} p_r(\bm{\ell})
		\prod_{m=1}^d\E_m\left[e^{-\bm \theta \cdot \bo Z_{T-t}}\right]^{\ell_m-g_m}}{\E_r\left[\bo L^{\floor{\bo g}}\prod_{m=1}^d\E_m\left[e^{-\bm \theta \cdot \bo Z_{T-t}}\right]^{L^{(m)}-g_m}\right]}\\
		& \hspace{6.5cm} \times
		\frac{\prod_{\substack{m\in [d]\\ g_m\neq 0}}\prod_{q=1}^{g_m}\E_m\left[N^{\floor{a_{m,q}}}_{T-t}e^{-\bm \theta \cdot \bo Z_{T-t}}\right]}{\E_r\left[N^{\floor{k}}_{T-t}e^{-\bm \theta\cdot \bo Z_{T-t}}\right]}\alpha_r {\rm d} t.
	\end{split}
	\]
	The latter follows since $ \bo Z$ is a Feller process with  c\`adl\`ag paths and by the dominated convergence theorem.
\end{remark}
\begin{proof}
	The proof follows from the  Markov branching property  at time $t$ and starting new MBGW processes  with laws $\p^{(a_{m,q})}_m$,  for $m\in [d]$, $q\in[g_m]$, with $a_{m,q}={\rm card}\{ A_{m,q}\}$. 
Indeed by Lemma \ref{lemmaMrkovPropertyUnderQ} applied at time $t$, we have
	\[
	\begin{split}
		 \Q^{(k),\bm \theta }_{T,r}&\left(\left.\tau_1<t+\epsilon,\mathcal{P}_{\tau_1}=\bo{P},\bo{L}_{\tau_1}=\bm{\ell}, B_{[t,t+\epsilon)} \right| \chi_\emptyset>t\right)\\
		 &\hspace{5cm}= \Q^{(k),\bm \theta }_{T-t,r}\left(\tau_1<\epsilon,\mathcal{P}_{\tau_1}=\bo{P},\bo{L}_{\tau_1}=\bm{\ell}, B_{[0,\epsilon)}\right).
	\end{split}
	\]
	Using the definition of $\Q^{(k),\bm \theta }_{T-t,r}$ we have
	\begin{equation}\label{problemma7}
	\begin{split}
		 \Q^{(k),\bm \theta }_{T-t,r}&\left(\tau_1<\epsilon,\mathcal{P}_{\tau_1}=\bo{P},\bo{L}_{\tau_1}=\bm{\ell}, B_{[0,\epsilon)}\right)\\
		& \hspace{2cm}= \frac{1}{\E^{(k)}_r\left[N^{\floor{k}}_{T-t}e^{-\bm \theta\cdot \bo{Z}_{T-t}}\right]}\E^{(k)}_r\left[g_{k,T-t}e^{-\bm \theta\cdot \bo{Z}_{T-t}}\mathbf{1}_{\{\tau_1<\epsilon,\mathcal{P}_{\tau_1}=\bo{P},\bo{L}_{\tau_1}=\bm{\ell}, B_{[0,\epsilon)}\}}\right].
	\end{split}
	\end{equation}
	Now, we deal with the  numerator in the above identity and observe
	\begin{equation}\label{eqnLemmaLawAfterSplittingEventUsingPartitions1}
		\begin{split}
			 \E^{(k)}_r&\left[g_{k,T-t}e^{-\bm \theta\cdot \bo{Z}_{T-t}}\mathbf{1}_{\{\tau_1<\epsilon,\mathcal{P}_{\tau_1}=\bo{P},\bo{L}_{\tau_1}=\bm{\ell}, B_{[0,\epsilon)}\}}\right]\\
			&= \p^{(k)}_r\left(\tau_1<\epsilon,\mathcal{P}_{\tau_1}=\bo{P},\bo{L}_{\tau_1}=\bm{\ell}, B_{[0,\epsilon)}\right)\\
			&\hspace{2cm}\times\E^{(k)}_r\left[g_{k,T-t}e^{-\bm \theta\cdot \bo{Z}_{T-t}}\left|\tau_1<\epsilon,\mathcal{P}_{\tau_1}=\bo{P},\bo{L}_{\tau_1}=\bm{\ell}, B_{[0,\epsilon)}\right.\right]\\
			& = \p^{(k)}_r\left(\chi_{\emptyset}<\epsilon, \bo{L}_{\emptyset}=\bm{\ell}, B_{[0,\epsilon)}\right)\p^{(k)}_r\left(\left.\mathcal{P}_{\tau_1}=\bo{P}\right|\tau_1<\epsilon,\bo{L}_{\tau_1}=\bm{\ell}, B_{[0,\epsilon)}\right)\\
			&\hspace{4cm}\times\E^{(k)}_r\left[g_{k,T-t}e^{-\bm \theta\cdot \bo{Z}_{T-t}}\Big|\tau_1<\epsilon,\mathcal{P}_{\tau_1}=\bo{P},\bo{L}_{\tau_1}=\bm{\ell}, B_{[0,\epsilon)}\right],
		\end{split}
	\end{equation}
where the last identity follows from the fact that  
\[
\{\tau_1<\epsilon, \bo{L}_{\tau_1}=\bm{\ell}\}\cap B_{[0,\epsilon)}=\{\chi_{\emptyset}<\epsilon, \bo{L}_{\emptyset}=\bm{\ell}\}\cap B_{[0,\epsilon)}.
\]
	Given that there are $\bm \ell$ children born at $\tau_1$, which is the first branching event in $[0,\epsilon)$, all the $k$ marks spread through the children according to 
	\begin{equation}\label{eqnProbabilityOfAGivenPartition}
		 \begin{split}
		 	\p^{(k)}_r\left(\left.\mathcal{P}_{\tau_1}=\bo{P}\right|\tau_1<\epsilon,\bo{L}_{\tau_1}=\bm{\ell}, B_{[0,\epsilon)}\right)
		 &=\prod_{\substack{m\in [d]\\ g_m\neq 0}}\paren{\prod_{j=1}^{\overline{a}_{m}}\frac{\ell_m\xi _m}{\bm \ell \cdot \bm \xi}}\frac{\ell_m^{\floor{g_m}}}{\ell_m^{\overline{a}_{m}}}\\
		 &=\prod_{\substack{m\in [d]\\ g_m\neq 0}}\paren{\frac{\xi _m}{\bm \ell \cdot \bm \xi}}^{\overline{a}_{m}}\ell_m^{\floor{g_m}},
		 \end{split}
	\end{equation}
	since the number of marks that will follow an individual type $m$ is $\overline{a}_{m}$, and given that $\overline{a}_{m}$ balls are put uniformly and independently into $\ell_m$ urns, they are splitted into exactly $g_m$ groups with probability $\ell_m^{\floor{g_m}}/\ell_m^{\overline{a}_{m}}$. 
	
	For the last term in \eqref{eqnLemmaLawAfterSplittingEventUsingPartitions1} we have
	\begin{equation}\label{eqnLemmaLawAfterSplittingEventUsingPartitions2}
		\begin{split}
			\E^{(k)}_r&\left[g_{k,T-t}e^{-\bm \theta\cdot \bo{Z}_{T-t}}\Big|\tau_1<\epsilon,\mathcal{P}_{\tau_1}=\bo{P},\bo{L}_{\tau_1}=\bm{\ell}, B_{[0,\epsilon)}\right]\\
			& = \E^{(k)}_r\left[\mathbf{1}_{\{\tau_1<\epsilon,\mathcal{P}_{\tau_1}=\bo{P},\bo{L}_{\tau_1}=\bm{\ell}, B_{[0,\epsilon)}\}}\E^{(k)}_r\left[\left. g_{k,T-t}e^{-\bm \theta\cdot \bo Z_{T-t}} \right|\F^{(k)}_{\epsilon} \right]\right]\\
			& \hspace{5cm} \times \frac{1}{\p^{(k)}_r\left(\tau_1<\epsilon,\mathcal{P}_{\tau_1}=\bo{P},\bo{L}_{\tau_1}=\bm{\ell}, B_{[0,\epsilon)}\right)}.
		\end{split}	
\end{equation}
Let us now handle the term $\E^{(k)}_r\left[\left. g_{k,T-t}e^{-\bm \theta\cdot \bo Z_{T-t}} \right|\F^{(k)}_{\epsilon} \right]$ under the event $B_{[0,\epsilon)}$ and prove that it is equal to
\[
\begin{split}
\prod_{\substack{m\in [d]\\ g_m\neq 0}}\paren{\frac{ \bm{\ell}\cdot \bm \xi}{\xi_m}}^{\overline{a}_{m}} \left( \prod_{m=1}^d\E_m\left[e^{-\bm \theta \cdot \bo Z_{T-t-\epsilon}}\right]^{\ell_m-g_m}\right)\prod_{q=1}^{g_m}\E_m\left[N^{\floor{a_{m,q}}}_{T-t-\epsilon}e^{-\bm \theta \cdot \bo Z_{T-t-\epsilon}}\right],
		\end{split}
\]
which implies that the computation in  \eqref{eqnLemmaLawAfterSplittingEventUsingPartitions2} is identical to  the latter. Indeed, from \eqref{eqnDecompositionOfGLemmaOfPartitions} with $I=[0,\epsilon)$ and under the event $\{\tau_1<\epsilon,\mathcal{P}_{\tau_1}=\bo{P},\bo{L}_{\tau_1}=\bm{\ell},B_{[0,\epsilon)}\}$, we have the following decomposition of $g_{k,T-t}$,
	\begin{align*}
		g_{k,T-t}= \widetilde{g}_{k,I}\prod_{\substack{m\in [d]\\g_m\neq 0}}\prod_{q=1}^{g_m}\widehat{g}_{A_{m,q},[\epsilon,T-t]}.	\end{align*}
	Next from \eqref{decompZ}, we rewrite $\bo{Z}_{T-t}$ as follows
	\[
\begin{split}
	\bo{Z}_{T-t}=\sum_{m=1}^d\sum_{q=1}^{g_m}\bo Z_{[\epsilon,T-t],m,q}+\sum_{m=1}^d\sum_{q=1}^{\ell_m-g_m}\widetilde {\bo Z}_{[\epsilon,T-t],m,q}.
\end{split}
\]
Hence, under $\{\tau_1<\epsilon,\mathcal{P}_{\tau_1}=\bo{P},\bo{L}_{\tau_1}=\bm{\ell}, B_{[0,\epsilon)}\}$, we  may decompose $g_{k,T-t}e^{-\bm \theta \cdot \bo{Z}_{T-t}}$ as follows
	\[
	g_{k,T-t}e^{-\bm \theta \cdot \bo{Z}_{T-t}}=\widetilde{g}_{k,I} \left(\prod_{m=1}^d\prod_{q=1}^{\ell_m-g_m}e^{-\bm \theta \cdot \widetilde {\bo Z}_{[\epsilon,T-t],m,q}}\right) \prod_{\substack{m\in [d]\\ g_m\neq 0}}\prod_{q=1}^{g_m}\widehat{g}_{A_{m,q},[\epsilon,T-t]}e^{-\bm \theta\cdot\bo Z_{[\epsilon,T-t],m,q}}.
	\]Thus, fixing any $m\in [d]$ with $g_m\neq 0$ and $A_{m,q}$, for $q\in[g_m]$, and then using the Markov property under $\p^{(k)}_r$, under the event $\{\tau_1<\epsilon,\mathcal{P}_{\tau_1}=\bo{P},\bo{L}_{\tau_1}=\bm{\ell}, B_{[0,\epsilon)}\}$, we have
	\begin{align*}
		 \E^{(k)}_r\left[\widehat{g}_{A_{m,q},[\epsilon,T-t]}e^{-\bm \theta\cdot{\bo Z}_{[\epsilon,T-t],m,q}}\Big|\F^{(k)}_{\epsilon}\right] & = \E^{(a_{m,q})}_m\left[g_{a_{m,q},T-t-\epsilon}\,e^{-\bm \theta\cdot\bo{Z}_{T-t-\epsilon}}\right]\\
		 &=\E^{(a_{m,q})}_m\left[N^{\floor{a_{m,q}}}_{T-t-\epsilon}e^{-\bm \theta\cdot\bo{Z}_{T-t-\epsilon}}\right].
	\end{align*}
	Moreover, using again the Markov property under the same event, we deduce 
	\[
	\begin{split}
	\E^{(k)}_r&\left[\widetilde{g}_{k,I} \left(\prod_{m=1}^d\prod_{q=1}^{\ell_m-g_m}e^{-\bm \theta \cdot \widetilde{\bo Z}_{[\epsilon,T-t],m,q}}\right) \Bigg|\F^{(k)}_{\epsilon}\right]\\
	&\hspace{4cm}=\prod_{\substack{m\in [d]\\ g_m\neq 0}}\paren{\frac{ \bm{\ell}\cdot \bm \xi}{\xi_m}}^{\overline{a}_{m}} \left(\prod_{m=1}^d\E_m\left[e^{-\bm \theta \cdot \bo {Z}_{T-t-\epsilon}}\right]^{\ell_m-g_m}\right).
	\end{split}
	\]
	Hence, under the same event, the conditional expectation in \eqref{eqnLemmaLawAfterSplittingEventUsingPartitions2} satisfies
	\begin{align*}
		\E^{(k)}_r\left[\left. g_{k,T-t}e^{-\bm \theta\cdot \bo Z_{T-t}} \right|\F^{(k)}_{\epsilon} \right]
		& = \prod_{\substack{m\in [d]\\ g_m\neq 0}}\paren{\frac{ \bm{\ell}\cdot \bm \xi}{\xi_m}}^{\overline{a}_{m}} \left(\prod_{m=1}^d\E_m\left[e^{-\bm \theta \cdot \bo {Z}_{T-t-\epsilon}}\right]^{\ell_m-g_m}\right)\\
		& \hspace{3cm} \times \prod_{\substack{m\in [d]\\ g_m\neq 0}}\prod_{q=1}^{g_m}\E^{(a_{m,q})}_m\left[N^{\floor{a_{m,q}}}_{T-t-\epsilon}e^{-\bm \theta\cdot\bo{Z}_{T-t-\epsilon}}\right],
	\end{align*}
	as expected. The latter proves our claim which is below \eqref{eqnLemmaLawAfterSplittingEventUsingPartitions2}.

	Finally, putting all pieces together (i.e. in \eqref{problemma7} we use  \eqref{eqnLemmaLawAfterSplittingEventUsingPartitions1} \eqref{eqnProbabilityOfAGivenPartition}, \eqref{eqnLemmaLawAfterSplittingEventUsingPartitions2}),  we obtain
	
	\[
	\begin{split}
		\Q^{(k),\bm \theta }_{T,r}&\left(\left.\tau_1<t+\epsilon,\mathcal{P}_{\tau_1}=\bo{P},\bo{L}_{\tau_1}=\bm{\ell}, B_{[t,t+\epsilon)} \right| \chi_\emptyset>t\right) \\
		& = \frac{1}{\E^{(k)}_r\left[N^{\floor{k}}_{T-t}e^{-\bm \theta\cdot \bo Z_{T-t}}\right]}\p^{(k)}_r\left(\chi_\emptyset<\epsilon,\bo{L}_{\emptyset}=\bm{\ell}, B_{[0,\epsilon)}\right)\paren{\prod_{\substack{m\in [d]\\ g_m\neq 0}}\paren{\frac{\xi _m}{\bm \ell \cdot \bm \xi}}^{\overline{a}_{m}}\ell_m^{\floor{g_m}}}\\
		& \hspace{3cm} \times \prod_{\substack{m\in [d]\\ g_m\neq 0}}\paren{\frac{ \bm{\ell}\cdot \bm \xi}{\xi_m}}^{\overline{a}_{m}} \left(\prod_{m=1}^d\E_m\left[e^{-\bm \theta \cdot \bo {Z}_{T-t-\epsilon}}\right]^{\ell_m-g_m}\right)\\
		& \hspace{7cm} \times \prod_{\substack{m\in [d]\\ g_m\neq 0}}\prod_{q=1}^{g_m}\E^{(a_{m,q})}_m\left[N^{\floor{a_{m,q}}}_{T-t-\epsilon}e^{-\bm \theta\cdot\bo{Z}_{T-t-\epsilon}}\right]\\
		& = p_r(\bm{\ell})\bm  \ell^{\floor{\bo g}}
		\left(\prod_{m=1}^d\E_m\left[e^{-\bm \theta \cdot \bo {Z}_{T-t-\epsilon}}\right]^{\ell_m-g_m}\right)\\
		& \qquad \times
		\frac{\prod_{\substack{m\in [d]\\ g_m\neq 0}}\prod_{q=1}^{g_m}\E^{(a_{m,q})}_m\left[N^{\floor{a_{m,q}}}_{T-t-\epsilon}e^{-\bm \theta\cdot\bo{Z}_{T-t-\epsilon}}\right]}{\E^{(k)}_r\left[N^{\floor{k}}_{T-t}e^{-\bm \theta\cdot \bo Z_{T-t}}\right]}\paren{\alpha_r\epsilon+o(\epsilon)},
	\end{split}
	\]
where the latter follows from
	\[
	\p^{(k)}_r\left(\chi_\emptyset<\epsilon,\bo{L}_{\emptyset}=\bm{\ell}, B_{[0,\epsilon)}\right)=p_r(\bm{\ell})\big(\alpha_r\epsilon+o(\epsilon)\big).
	\] 
	This  completes the proof.
\end{proof}

Note that the formula obtained in the previous lemma only depends on the partition $\bo P$ through the sequences $\bo g$ and $(a_{m,q})_{m\in [d], q\in [g_m]}$.
Hence, as a corollary, we can obtain a similar result but now summing over all possible partitions having block sizes $(a_{m,q})_{m\in[d],q\in [g_m]}$.
Indeed, recall that $d_{m,n}:=\textrm{card}\{q:a_{m,q}=n\}$. Then, the number of colored partitions $(P_m)_{m\in [d]}$ such that each  $P_m$ is made of $g_m$ blocks of sizes $(a_{m,q})_{q\in [g_m]} $  is
\begin{equation}\label{eqnNumberOfPartitionsHavingGivenblockSizes}
\frac{k!}{\prod_{m=1}^d\left(\prod_{q=1}^{g_m} a_{m,q}!\prod_{n\geq 1}d_{m,n}!\right)}.
\end{equation}Note that this is equivalent to ordering the urns  as $A_{1,1},A_{1,2},\ldots, A_{1,g_1},A_{2,1},\ldots, A_{2,g_2},\ldots, A_{d,g_d}$ and counting the number of ways to distribute the $k$ marks among them so that each  urn $(m,q)$ contains exactly $a_{m,q}$ marks, for all $m\in [d],q\in[g_m]$, while disregarding the order of the marks within each urn.

Under the event $\{\bo L_\emptyset=\bm \ell,\chi_\emptyset=\tau_1\}$, let $\# \mathcal{P}_{\tau_1}:=(a_{m,q})_{m\in[d],q\in [g_m]}$ denote the \emph{block sizes of the colored partition} $\mathcal{P}_{\tau_1}$.
Then, summing over all possible partitions having the size $(a_{m,q})_{m\in[d],q\in [g_m]}$, we have the following corollary.
\begin{coro}\label{coroGeneralRateWhenABranchingEventOccursKnowingInfo}
	Conditional on $\{\chi_\emptyset>t\}$, the $\Q^{(k),\bm \theta }_{T,r}$-conditional probability that at time $t$ there are not births-off the spine, the  particle  carrying all marks dies and gives birth to $\bm \ell\in \z^d_+\setminus\{\bo 0\} $ offspring, and the marks follow a coloured partition with block sizes $(a_{m,q})_{m\in[d], q\in [g_m]}$ with $\sum_m\sum_qg_{m,q}=k$, is given by
	\[
	\begin{split}
		\Q^{(k),\bm \theta }_{T,r}&\left(\left.\tau_1\in {\rm d} t,\#\mathcal{P}_{\tau_1}=(a_{m,q})_{m\in[d], q\in [g_m]},\bo{L}_{\tau_1}=\bm{\ell},  \right| \chi_\emptyset>t \right)\\
\\
& =\frac{k!}{\prod_{m=1}^d\left(\prod_{q=1}^{g_m} a_{m,q}!\prod_{n\geq 1}d_{m,n}!\right)}\E_r\left[\bo L^{\floor{\bo g}}\prod_{m=1}^d\E_m\left[e^{-\bm \theta \cdot \bo Z_{T-t}}\right]^{L^{(m)}-g_m}\right]
\\
		&\hspace{1cm}\times\frac{\bm \ell^{\floor{\bo g}} p_r(\bm{\ell})
		\prod_{m=1}^d\E_m\left[e^{-\bm \theta \cdot \bo Z_{T-t}}\right]^{\ell_m-g_m}}{\E_r\left[\bo L^{\floor{\bo g}}\prod_{m=1}^d\E_m\left[e^{-\bm \theta \cdot \bo Z_{T-t}}\right]^{L^{(m)}-g_m}\right]}
		\frac{\prod_{\substack{m\in [d]\\ g_m\neq 0}}\prod_{q=1}^{g_m}\E_m\left[N^{\floor{a_{m,q}}}_{T-t}e^{-\bm \theta \cdot \bo Z_{T-t}}\right]}{\E_r\left[N^{\floor{k}}_{T-t}e^{-\bm \theta\cdot \bo Z_{T-t}}\right]} {\rm d} t .
	\end{split}
	\]
\end{coro}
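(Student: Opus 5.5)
The plan is to obtain the statement by summing the identity of Lemma \ref{lemmaGeneralRateWhenABranchingEventOccursKnowingInfo} (in the differential form of Remark \ref{remark1}) over all coloured partitions $\bo P$ of $[k]$ with prescribed block sizes $(a_{m,q})_{m\in[d],q\in[g_m]}$. The event $\{\#\mathcal{P}_{\tau_1}=(a_{m,q})\}$ is the disjoint union of the events $\{\mathcal{P}_{\tau_1}=\bo P\}$ over such $\bo P$. By countable additivity, the conditional density of $\{\tau_1\in{\rm d}t,\#\mathcal{P}_{\tau_1}=(a_{m,q}),\bo L_{\tau_1}=\bm\ell\}$ given $\{\chi_\emptyset>t\}$ is therefore the sum of the right-hand sides in Remark \ref{remark1}.

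The key observation, already noted after Lemma \ref{lemmaGeneralRateWhenABranchingEventOccursKnowingInfo}, is that this right-hand side depends on $\bo P$ only through $\bo g$ and the multiset of block sizes. These enter via $\bm\ell^{\floor{\bo g}}$, via the exponents $\ell_m-g_m$, and via the product $\prod_{m,q}\E_m[N^{\floor{a_{m,q}}}_{T-t}e^{-\bm\theta\cdot\bo Z_{T-t}}]$, which is invariant under permuting blocks of equal size within a colour. Hence the sum is the common value multiplied by the number of coloured partitions with those block sizes, which is the count \eqref{eqnNumberOfPartitionsHavingGivenblockSizes}. The factor $\E_r[\bo L^{\floor{\bo g}}\cdots]$ appears in both numerator and denominator, so it may be kept as written.

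What remains is to justify that count. I would do it by ordering the $\sum_m g_m$ urns as $A_{1,1},\dots,A_{d,g_d}$. First distribute the $k$ labelled marks with urn $(m,q)$ receiving $a_{m,q}$ marks, in $k!/\prod_{m,q}a_{m,q}!$ ways. Then quotient by the relabellings of urns within the same colour that preserve sizes, of which there are $\prod_m\prod_n d_{m,n}!$. The action of this group is free, because the blocks are nonempty and disjoint, so distinct urns always hold distinct mark sets. Urns of different colours are never identified, since colour is part of the coloured partition.

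The main obstacle is only bookkeeping: one must check that the events $\{\mathcal P_{\tau_1}=\bo P\}$ are indexed by unordered coloured partitions, consistent with how \eqref{eqnProbabilityOfAGivenPartition} counts them. That formula gives probability $\ell_m^{\floor{g_m}}/\ell_m^{\overline a_m}$ to a specific unordered grouping into $g_m$ distinct children, so no extra factorial appears and the count above is the correct multiplier. The passage $\epsilon\to0$ has already been handled in Remark \ref{remark1}.
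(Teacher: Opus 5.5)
Your proposal is correct and follows the paper's argument: you sum the density of Remark~\ref{remark1} over the disjoint events $\{\mathcal{P}_{\tau_1}=\bo P\}$, observe that the summand depends on $\bo P$ only through $\bo g$ and the block sizes, and multiply by the count \eqref{eqnNumberOfPartitionsHavingGivenblockSizes}, which you justify more carefully than the paper does. One small point is that this summation carries along the factor $\alpha_r\,{\rm d}t$ from Remark~\ref{remark1}, so the displayed statement has two typos that your argument effectively corrects: it ends in ${\rm d}t$ alone, and its constraint should read $\sum_m\sum_q a_{m,q}=k$.
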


Now, let us describe the rate of those vertices carrying no marks and the law of their offspring.
\begin{lemma}\label{lemmaLawOfIndividualsWithNoMarks}
Under $\Q^{(k),\bm \theta }_{T,r}$,   the rate of a branching event for a vertex of type $i$ with no marks at  time $t$ is given by 
\[
\alpha_i\frac{\E_i\left[\prod_{m=1}^d\E_m\left[ e^{-\bm \theta\cdot \bo Z_{T-t} }\right]^{ L^{(m)}} \right]}{\E_i\left[e^{-\bm \theta\cdot \bo Z_{T-t} }\right]}{\rm d} t,
\]
and the probability that there are $\bm \ell$ offspring satisfies
\[
p_i(\bm \ell)\frac{\prod_{m=1}^d\E_m\left[ e^{-\bm \theta\cdot \bo Z_{T-t} }\right]^{\ell_m}}{\E_i\left[\prod_{m=1}^d\E_m\left[ e^{-\bm \theta\cdot \bo Z_{T-t} }\right]^{ L^{(m)}} \right]}.
\]
\end{lemma}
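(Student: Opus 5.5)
The plan is to reduce the statement to the case of a single individual with no marks and then compute directly from the Radon--Nikodym derivative, in the same spirit as the proofs of Lemma \ref{lemmaTailOfPsi_1} and Lemma \ref{lemmatomate}.

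\emph{Reduction.} By the Markov branching property (Lemma \ref{lemmaMrkovPropertyUnderQ}) with $j=0$, the subtree of a type-$i$ vertex carrying no marks at time $t$ is independent of the rest of the system. Its law is that of the process under $\p^{\bm\theta}_{T-t,i}$, defined in \eqref{eqnChangeOfMeasureMtypeBGWDiscounted}. It therefore suffices to study the root under $\p^{\bm\theta}_{T',i}$ with $T'=T-t$, and to compute the probability that the root dies in $[0,\epsilon)$ with offspring vector $\bm\ell$.

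\emph{Main computation.} Write $\phi_m(s):=\E_m[e^{-\bm\theta\cdot\bo Z_s}]$. Then
\[
\p^{\bm\theta}_{T',i}\paren{\chi_\emptyset<\epsilon,\bo L_\emptyset=\bm\ell}=\frac{\E_i\left[e^{-\bm\theta\cdot\bo Z_{T'}}\mathbf{1}_{\{\chi_\emptyset<\epsilon,\bo L_\emptyset=\bm\ell\}}\right]}{\E_i\left[e^{-\bm\theta\cdot\bo Z_{T'}}\right]}.
\]
\begin{itemize}
\item Condition on $\F_\epsilon$ and restrict to the event $B_{[0,\epsilon)}$ that only one branching occurs in $[0,\epsilon)$; the complement contributes $o(\epsilon)$, since the integrand is bounded by $1$.
\item On $B_{[0,\epsilon)}$, the branching property under $\p_i$ factorises the conditional expectation. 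This is the decomposition \eqref{decompZ} with every $g_m=0$, and it gives $\prod_m\phi_m(T'-\epsilon)^{\ell_m}$.
\item Since $\p_i(\chi_\emptyset<\epsilon,\bo L_\emptyset=\bm\ell,B_{[0,\epsilon)})=p_i(\bm\ell)(\alpha_i\epsilon+o(\epsilon))$, the probability above equals
\[
\alpha_i\epsilon\, p_i(\bm\ell)\frac{\prod_m\phi_m(T'-\epsilon)^{\ell_m}}{\phi_i(T')}+o(\epsilon).
\]
\item Let $\epsilon\to0$. Continuity of $s\mapsto\phi_m(s)$ follows from the Feller/c\`adl\`ag property together with dominated convergence, as in Lemma \ref{lemmatomate}. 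This gives the instantaneous rate $\alpha_i p_i(\bm\ell)\prod_m\phi_m(T-t)^{\ell_m}/\phi_i(T-t)$ for a branching with offspring $\bm\ell$ at time $t$.
\end{itemize}

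\emph{Rate and offspring law.} Summing the instantaneous rate over $\bm\ell$ gives the total branching rate $\alpha_i\E_i[\prod_m\phi_m(T-t)^{L^{(m)}}]/\phi_i(T-t)$. Dividing the $\bm\ell$-rate by this total gives the stated offspring law.

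\emph{Main obstacle.} The delicate point is making the time-inhomogeneity rigorous. We need the conditional rate of a branching at time $t$ given survival up to $t$, not the unconditional probability at time $0$. I would handle this through the Markov property (Lemma \ref{lemmaMrkovPropertyUnderQ}) applied at time $t$, exactly as done at the start of the proof of Lemma \ref{lemmaGeneralRateWhenABranchingEventOccursKnowingInfo}, which resets the horizon to $T-t$. One must also justify that the $o(\epsilon)$ terms are uniform in $\bm\ell$ when summing. This holds because they are dominated by $\p_i(\text{at least two branchings in }[0,\epsilon))$, which is $o(\epsilon)$ by conservativeness.
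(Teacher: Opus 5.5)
Your proposal is correct and follows the same route as the paper. You reduce via Lemma \ref{lemmaMrkovPropertyUnderQ} with $j=0$ to a root under $\p^{\bm\theta}_{T-t,i}$, evaluate $\p^{\bm\theta}_{T-t,i}(\chi_\emptyset<\epsilon,\bo L_\emptyset=\bm\ell)$ through the Radon--Nikodym derivative and the branching property, and let $\epsilon\to0$. Your explicit handling of $B_{[0,\epsilon)}$, the shifted horizon $T-t-\epsilon$, and the uniformity of the $o(\epsilon)$ terms over $\bm\ell$ is slightly cleaner than the paper's. One small correction: that uniform $o(\epsilon)$ comes from the bound $\alpha_i\epsilon\,\E_i[\min(1,\epsilon\max_m\alpha_m\,\vec{1}\cdot\bo L)]=o(\epsilon)$ by dominated convergence, not from conservativeness.
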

\begin{proof} Let us consider a vertex $v$ of type $i$  at time $t$.  From Lemma \ref{lemmaMrkovPropertyUnderQ}, we have that the subtree generated by $v$ after time $t$ is independent of the system and behaves as if under $\p^{\bm \theta}_{T-t, i}$.

Recall that $\chi_v$ and $\bo L_v$ denote the lifetime and the offspring distribution of the vertex $v$. Thus in order to deduce the result, we are interested in the  probability of the event $\{\chi_v<\epsilon, \bo L_v= \bm \ell\}$, for $\epsilon>0$ and $\bm \ell \in \mathbb{Z}^{d}_+$. 

From the definition of $\p^{\bm \theta}_{T-t, i}$ in \eqref{eqnChangeOfMeasureMtypeBGWDiscounted}, we see 
\[
\p^{\bm \theta}_{T-t, i}\paren{\chi_v<\epsilon, \bo L_v= \bm \ell}=\frac{\E_i\left[\mathbf{1}_{\{\chi_v<\epsilon, \bo L_v= \bm \ell\}}e^{-\bm \theta\cdot \bo Z_{T-t} }\right]}{\E_i\left[e^{-\bm \theta\cdot \bo Z_{T-t} }\right]}.
\]
We first deal with the numerator, 
\begin{align*}
\E_i\left[\mathbf{1}_{\{\chi_v<\epsilon, \bo L_v= \bm \ell\}}e^{-\bm \theta\cdot \bo Z_{T-t} }\right]& =\p_i\paren{\chi_v<\epsilon, \bo L_v= \bm \ell}\E_i\left[\left. e^{-\bm \theta\cdot \bo Z_{T-t} }\right| \chi_v<\epsilon, \bo L_v= \bm \ell\right]\\
&=(\alpha_i \epsilon+o(\epsilon))p_i\paren{\bm \ell}\E_i\left[\left. e^{-\bm \theta\cdot \bo Z_{T-t} }\right| \chi_v<\epsilon, \bo L_v= \bm \ell\right]\\
&=(\alpha_i \epsilon+o(\epsilon))p_i\paren{\bm \ell}\prod_{m=1}^d\E_m\left[ e^{-\bm \theta\cdot \bo Z_{T-t-h} }\right]^{\ell_m}\\
&=\alpha_i\epsilon p_i\paren{\bm \ell}\prod_{m=1}^d\E_m\left[ e^{-\bm \theta\cdot \bo Z_{T-t-h} }\right]^{\ell_m}+o(\epsilon),
\end{align*}
where in the penultimate line we have used the branching property. Thus
\[
\p^{\bm \theta}_{T-t, i}\paren{\chi_v<\epsilon, \bo L_v= \bm \ell}=\alpha_i\epsilon p_i\paren{\bm \ell}\frac{\prod_{m=1}^d\E_m\left[ e^{-\bm \theta\cdot \bo Z_{T-t-h} }\right]^{\ell_m}}{\E_i\left[e^{-\bm \theta\cdot \bo Z_{T-t} }\right]}+o(\epsilon).
\]
The previous asymptotic clearly  implies the result. 
\end{proof}
Next, we  compute the joint law of the first splitting event together with its   offspring,  the partition generated by the spines, and the color of the individual previous to the splitting event. This result is quite useful for the description of  the whole genealogy of the tree of the $k$-sample under $\Q^{(k),\bm \theta}_{T,r}$.

\begin{propo}\label{propoFirstSplittingEventWithPartitionChildAndType}
Fix $k\geq 2$. 
Then we have 
\begin{equation}\label{eqnpropoFirstSplittingEvent}
    \begin{split}
    \Q^{(k),\bm \theta}_{T,r}&\left(\tau_1\in {\rm d}t,\mathcal{P}_{\tau_1}=\bo{P},\bo{L}_{\tau_1}=\bm{\ell} , c(\varsigma^{(1)}_{t-})=i\right)\\
        & =\alpha_{i}\E_i\left[\bo L^{\floor{\bo g}}\prod_{m=1}^d\E_m\left[e^{-\bm \theta \cdot \bo Z_{T-t}}\right]^{L^{(m)}-g_m}\right] \frac{p_{i}(\bm \ell)\bm \ell^{\floor{\bo g}}\prod_{m=1}^d \E_m\left[e^{-\bm \theta \cdot \bo Z_{T-t}}\right]^{\ell_m-g_m}}{\E_i\left[\bo L^{\floor{\bo g}}\prod_{m=1}^d\E_m\left[e^{-\bm \theta \cdot \bo Z_{T-t}}\right]^{L^{(m)}-g_m}\right]}\\
&\hspace{1.2cm} \times \E_r\left[ Z^{(i)}_{t}\prod_{m=1}^d\E_m\left[e^{-\bm \theta \cdot \bo Z_{T-t}}\right]^{Z^{(m)}_{t}-\delta_{i,m}}\right]\frac{\prod_{m=1}^d\prod_{q=1}^{g_m} \E_m\left[N^{\floor{a_{m,q}}}_{T-t}e^{-\bm \theta \cdot \bo Z_{T-t}}\right]}{\E_r\left[N^{\floor{k}}_{T}e^{-\bm \theta\cdot \bo Z_{T}}\right]}{\rm d} t.
    \end{split}
\end{equation}
\end{propo}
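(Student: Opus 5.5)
The plan is to condition on the state of the spine immediately before time $t$ and combine two earlier results. The second part of Lemma \ref{lemmaTailOfPsi_1} gives the probability that the spines have not split by time $t$ and currently follow a type $i$ particle. The Markov branching property (Lemma \ref{lemmaMrkovPropertyUnderQ}) together with Remark \ref{remark1} gives the instantaneous rate of a spine splitting with a prescribed $(\bo P,\bm\ell)$, starting from a type $i$ particle carrying all $k$ marks.

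Concretely, for small $\epsilon>0$ write
\[
\Q^{(k),\bm \theta}_{T,r}\paren{\tau_1\in [t,t+\epsilon),\mathcal{P}_{\tau_1}=\bo{P},\bo{L}_{\tau_1}=\bm{\ell} , c(\varsigma^{(1)}_{t})=i}
\]
as $\Q^{(k),\bm \theta}_{T,r}\paren{c(\varsigma^{(1)}_t)=i,\tau_1>t}$ times the conditional probability of the splitting event given $\{c(\varsigma^{(1)}_t)=i,\tau_1>t\}$. On that event the particle $\varsigma^{(1)}_t$ has type $i$ and carries all $k$ marks. By Lemma \ref{lemmaMrkovPropertyUnderQ}, its subtree after time $t$ behaves as under $\Q^{(k),\bm\theta}_{T-t,i}$, which is the same as being conditioned on $\{\chi_\emptyset>t\}$ in the time-shifted picture. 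Lemma \ref{lemmaGeneralRateWhenABranchingEventOccursKnowingInfo} then applies with root type $i$ and horizon $T-t$ at time $0$. Its limiting form in Remark \ref{remark1} shows that the conditional probability equals
\[
\alpha_i \,p_i(\bm\ell)\,\bm\ell^{\floor{\bo g}}\prod_{m}\E_m\!\left[e^{-\bm\theta\cdot\bo Z_{T-t}}\right]^{\ell_m-g_m}\frac{\prod_{m,q}\E_m\!\left[N^{\floor{a_{m,q}}}_{T-t}e^{-\bm\theta\cdot\bo Z_{T-t}}\right]}{\E_i\!\left[N^{\floor{k}}_{T-t}e^{-\bm\theta\cdot\bo Z_{T-t}}\right]}\,\epsilon+o(\epsilon).
\]
Here the $\E_i[\bo L^{\floor{\bo g}}\cdots]$ factors in the numerator and denominator of the displayed formula cancel.

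Multiplying by Lemma \ref{lemmaTailOfPsi_1}, the factor $\E_i[N^{\floor{k}}_{T-t}e^{-\bm\theta\cdot\bo Z_{T-t}}]$ cancels. What remains is
\[
\frac{\E_r\!\left[Z^{(i)}_t\prod_m\E_m\!\left[e^{-\bm\theta\cdot\bo Z_{T-t}}\right]^{Z^{(m)}_t}\right]}{\E_i\!\left[e^{-\bm\theta\cdot\bo Z_{T-t}}\right]\;\E_r\!\left[N^{\floor{k}}_Te^{-\bm\theta\cdot\bo Z_T}\right]}.
\]
Absorbing the $1/\E_i[\cdot]$ factor into the exponent gives $Z^{(m)}_t-\delta_{i,m}$. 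Dividing by $\epsilon$ and letting $\epsilon\to0$ then yields \eqref{eqnpropoFirstSplittingEvent}, with the trivial factor $\E_i[\bo L^{\floor{\bo g}}\cdots]/\E_i[\bo L^{\floor{\bo g}}\cdots]=1$ reinserted to match the stated form.

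The main obstacle is justifying the limit: the difference between $c(\varsigma^{(1)}_{t-})$ and $c(\varsigma^{(1)}_t)$, and the evaluation of expectations at $T-t-\epsilon$ versus $T-t$. I would handle this as in Lemmas \ref{lemmatomate}–\ref{lemmaBirthsOffTheSpine}. First, bound the probability of two or more branchings of the spine particle in $[t,t+\epsilon)$ by $o(\epsilon)$ using $B_{[t,t+\epsilon)}$. Then use the c\`adl\`ag Feller property of $\bo Z$ with dominated convergence to replace $T-t-\epsilon$ by $T-t$. All functionals involved are bounded by $N^{\floor{a}}e^{-\bm\theta\cdot\bo Z}$, which is integrable for $\bm\theta>0$.

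A second check is that conditioning on $\{c(\varsigma^{(1)}_t)=i,\tau_1>t\}$ legitimately reduces to the law $\Q^{(k),\bm\theta}_{T-t,i}$. This is exactly Lemma \ref{lemmaMrkovPropertyUnderQ} applied at $v=\varsigma^{(1)}_t$ with $j=k$, since the event is measurable with respect to $\mathcal H_{v,t}$.
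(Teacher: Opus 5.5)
Your proof is correct, but it is organised differently from the paper's.

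\textbf{Your route.} You factor the event into two pieces:
\begin{itemize}
\item the probability $\Q^{(k),\bm\theta}_{T,r}(c(\varsigma^{(1)}_t)=i,\tau_1>t)$, taken from the second part of Lemma \ref{lemmaTailOfPsi_1};
\item the instantaneous splitting rate of a type $i$ particle carrying all $k$ spines. You obtain this by applying Lemma \ref{lemmaMrkovPropertyUnderQ} at the vertex $\varsigma^{(1)}_t$ and then Lemma \ref{lemmaGeneralRateWhenABranchingEventOccursKnowingInfo} (Remark \ref{remark1}) at time $0$ under $\Q^{(k),\bm\theta}_{T-t,i}$.
\end{itemize}
The factor $\E_i[N^{\floor{k}}_{T-t}e^{-\bm\theta\cdot\bo Z_{T-t}}]$ cancels, the $1/\E_i[e^{-\bm\theta\cdot\bo Z_{T-t}}]$ becomes the $-\delta_{i,m}$ in the exponent, and the product is exactly \eqref{eqnpropoFirstSplittingEvent}.

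\textbf{The paper's route.} The paper never passes through these two lemmas. It computes directly under the reference measure $\p^{(k)}_r$:
\begin{itemize}
\item it splits $g_{k,T}$ and $\bo Z_T$ over $[0,t)$, $[t,t+\epsilon)$ and $[t+\epsilon,T]$ via \eqref{eqnDecompositionOfGLemmaOfPartitions} and \eqref{decompZ};
\item it conditions on $\F^{(k)}_{t+\epsilon}$ to evaluate the subtrees after $t+\epsilon$;
\item it sums over $v\in\mathcal N^{(i)}_{t-}$, as in \eqref{eqnChangeFromInfoUpTo_t}, to produce $\E_r[Z^{(i)}_t\prod_m\E_m[e^{-\bm\theta\cdot\bo Z_{T-t}}]^{Z^{(m)}_t-\delta_{i,m}}]$;
\item it cancels the mark-allocation probability $\prod_m(\xi_m/\bm\ell\cdot\bm\xi)^{\overline a_m}\ell_m^{\floor{g_m}}$ against $\widetilde g_{k,I}$.
\end{itemize}
In effect it re-derives the content of both lemmas inline.

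\textbf{Comparison.} Your route is shorter. It makes the structure transparent: the answer is the probability that the spine bundle is still unsplit on a type $i$ particle at time $t$, times the splitting rate of such a particle. The cost is that it leans on the Markov branching property under $\Q$ at the random vertex $\varsigma^{(1)}_t$, for the whole subtree including the mark allocation. This is more than the first-branching computation actually carried out in the proof of Lemma \ref{lemmaMrkovPropertyUnderQ}, although the paper uses that lemma in the same way inside Lemma \ref{lemmaGeneralRateWhenABranchingEventOccursKnowingInfo}.

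The paper's computation only needs the Markov property under $\p^{(k)}_r$, which is immediate from the construction. It also works with $c(\varsigma^{(1)}_{t-})$ directly. The possible type change between $t$ and $\tau_1-$ that you flag, coming from a birth off the spine in $[t,t+\epsilon)$, is then absorbed into the restriction to $B_{[t,t+\epsilon)}$ and the $o(\epsilon)$ terms. Your $o(\epsilon)$ handling is at the same level of rigour as the paper's.
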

\begin{proof}

Let $\epsilon>0$ and recall that $B_{[t,t+\epsilon)}$ denotes the event that only one branching event occurs in  $[t,t+\epsilon)$.   From \eqref{eqnDecompositionOfGLemmaOfPartitions} with $I=[t, t+\epsilon)$, we observe that under the event 
\[
\{\tau_1\in I,\mathcal{P}_{\tau_1}=\bo{P},\bo{L}_{\tau_1}=\bm{\ell},c(\varsigma^{(1)}_{t-})=i \}\cap B_{[t,t+\epsilon)},
\]  
the following decomposition for $g_{k,T}$ holds
\begin{align*}
	g_{k,T} &= \overline{g}_{k,[0,t)} \widetilde{g}_{k,I}\prod_{\substack{m\in [d]\\g_m\neq 0}}\prod_{q=1}^{g_m}\widehat{g}_{A_{m,q},[t+\epsilon,T]}.
\end{align*}
Similarly, $\bo Z_T$ can be decomposed as in \eqref{decompZ}, that is
\[
\begin{split}
	\bo{Z}_T=\sum_{m=1}^d\sum_{q=1}^{Z^{(m)}_{t-}-\delta_{i,m}}\overline{\bo Z}_{[t+\epsilon,T],m,q}+\sum_{m=1}^d\sum_{q=1}^{g_m}\bo Z_{[t+\epsilon,T],m,q}+\sum_{m=1}^d\sum_{q=1}^{\ell_m-g_m}\widetilde {\bo Z}_{[t+\epsilon,T],m,q}.
\end{split}
\]
Next, we use the definition of the $\Q^{(k),\bm \theta}_{T, r}$ measure and then we condition with respect to $\F^{(k)}_{t+\epsilon}$ and obtain
\begin{equation}\label{eqnpropoFirstSplittingEventWithPartitionChildAndType}
\begin{split}
	 \E_r\left[N^{\floor{k}}_{T}e^{-\bm \theta\cdot \bo Z_{T}}\right]&\Q^{(k),\bm \theta}_{T, r}\left[\tau_1\in I,\mathcal{P}_{\tau_1}=\bo{P},\bo{L}_{\tau_1}=\bm{\ell} , c(\varsigma^{(1)}_{t-})=i, B_{[t,t+\epsilon)}\right]\\
  &=\E^{(k)}_r\left[g_{k,T}e^{-\bm \theta\cdot \bo Z_{T}}\mathbf{1}_{\{\tau_1\in I,\mathcal{P}_{\tau_1}=\bo{P},\bo{L}_{\tau_1}=\bm{\ell} , c(\varsigma^{(1)}_{t-})=i, B_{[t,t+\epsilon)}\}}\right]\\
	&= \E^{(k)}_r\left[\mathbf{1}_{\{\tau_1\in I,\mathcal{P}_{\tau_1}=\bo{P},\bo{L}_{\tau_1}=\bm{\ell} , c(\varsigma^{(1)}_{t-})=i, B_{[t,t+\epsilon)}\}} \E^{(k)}_r\left[\left.g_{k,T}e^{-\bm \theta\cdot \bo Z_{T}} \right|\F^{(k)}_{t+\epsilon} \right]\right].
\end{split}
\end{equation}
By the decompositions of $g_{k,T}$ and $\bo Z_T$ provided above, we have that the conditional expectation in the above display,  under the event $\{\tau_1\in I,\mathcal{P}_{\tau_1}=\bo{P},\bo{L}_{1}=\bm{\ell} , c(\varsigma^{(1)}_{t-})=i\}\cap B_{[t,t+\epsilon)}$,  satisfies
\begin{equation}\label{eqnpropoFirstSplittingEventWithPartitionChildAndType2}
\begin{split}
	 \E^{(k)}_{r}&\left[\left.g_{k,T}e^{-\bm \theta \cdot \bo Z_T}\right|\F^{(k)}_{t+\epsilon}\right] = \overline{g}_{k,[0,t)}
	\E_r\left[e^{-\sum_{m=1}^d\sum_{q=1}^{Z^{(m)}_{t-}-\delta_{i,m}}\bm \theta\cdot\overline{\bo Z}_{[t+\epsilon,T],m,q}}\bigg|\F^{(k)}_{t+\epsilon}\right]\\
	&\hspace{1.5cm}\times\widetilde{g}_{k,I}\prod_{\substack{m\in [d]\\ g_m\neq 0}}\E^{(k)}_r\left[e^{-\sum_{q=1}^{g_m}\bm \theta\cdot\bo Z_{[t+\epsilon,T],m,q}-\sum_{q=1}^{\ell_m-g_m}\bm \theta\cdot\widetilde {\bo Z}_{[t+\epsilon,T],m,q}}\prod_{q=1}^{g_m}\widehat{g}_{A_{m,q},[t+\epsilon,T]}\bigg|\F^{(k)}_{t+\epsilon}\right]\\
	&=\overline{g}_{k,[0,t)}\prod_{m=1}^d\E_m\left[e^{-\bm \theta \cdot \bo Z_{T-t-\epsilon}}\right]^{Z^{(m)}_{t-}-\delta_{i,m}} \\
	&\hspace{2cm}\times \widetilde{g}_{k,I}\prod_{\substack{m\in [d]\\ g_m\neq 0}}\prod_{q=1}^{g_m} \E_m\left[N^{\floor{a_{m,q}}}_{T-t-\epsilon}e^{-\bm \theta \cdot \bo Z_{T-t-\epsilon}}\right] \E_m\left[e^{-\bm \theta \cdot \bo Z_{T-t-\epsilon}}\right]^{\ell_m-g_m}.
\end{split}
\end{equation} 
Now, under the event $\{\tau_1\in I,\mathcal{P}_{\tau_1}=\bo{P},\bo{L}_{\tau_1}=\bm{\ell} , c(\varsigma^{(1)}_{t-})=i\}\cap B_{[t,t+\epsilon)}$, we consider the expectation of the random terms in \eqref{eqnpropoFirstSplittingEventWithPartitionChildAndType2}, i.e. the first two terms on the right-hand side in the last identity. Thus, conditioning with respect to $\F_{t+\epsilon}$ and summing over $\mathcal{N}^{(i)}_{t-}$,  the set of all individuals of type $i$ in $\mathcal{N}_{t-}$, we get
\begin{equation}
    \begin{split}
   \E^{(k)}_r&\left[\mathbf{1}_{\{\tau_1\in I,\mathcal{P}_{\tau_1}=\bo{P},\bo{L}_{\tau_1}=\bm{\ell} , c(\varsigma^{(1)}_{t-})=i\}\cap B_{[t,t+\epsilon)}}\overline{g}_{k,[0,t)}\prod_{m=1}^d\E_m\left[e^{-\bm \theta \cdot \bo Z_{T-t-\epsilon}}\right]^{Z^{(m)}_{t-}-\delta_{i,m}} \right]\\
&\hspace{1cm}=\E^{(k)}_r\left[\mathbf{1}_{\{\tau_1\in I,\mathcal{P}_{\tau_1}=\bo{P},\bo{L}_{\tau_1}=\bm{\ell} , c(\varsigma^{(1)}_{t-})=i\}\cap B_{[t,t+\epsilon)}}  \mathbf{1}_{\{ \varsigma^{(h_1)}_{t-}= \varsigma^{(h_2)}_{t-},h_1,h_2\in [k]\}} \right.\\
& \hspace{4cm} \times \left. \left(\prod_{h\in [k]}\prod_{\text{$(w,c_w)\in {\rm spine}(\varsigma^{(h)}_{t})$}\atop\text{${\rm gen}(w)\in [0,t)$}}\frac{ \bo{L}_{w}\cdot \bm \xi}{\xi_{c_w}}\right) \prod_{m=1}^d\E_m\left[e^{-\bm \theta \cdot \bo Z_{T-t-\epsilon}}\right]^{Z^{(m)}_{t-}-\delta_{i,m}}\right]\\
&=\E^{(k)}_r\left[\sum_{ v\in \mathcal{N}^{(i)}_{t-}}\mathbf{1}_{ \{\mathcal{P}_{\tau_1}=\bo{P},\bo{L}_{\tau_1}=\bm{\ell}, c(v)=i\}\cap B_{[t,t+\epsilon)}}
\prod_{\text{$(w,c_w)\in {\rm spine}(v)$}\atop\text{${\rm gen}(w)\in [0,t)$}}\left(\frac{ \bo{L}_{w}\cdot \bm \xi}{\xi_{c_w}}\right)^k \right.\\
& \hspace{3cm} \times \prod_{m=1}^d\E_m\left[e^{-\bm \theta \cdot \bo Z_{T-t-\epsilon}}\right]^{Z^{(m)}_{t-}-\delta_{i,m}}  \p^{(k)}_r\left(\left. \varsigma^{(h)}_{t-}=v,\ h\in [k]\right| \F_{t+\epsilon}\right)  \Bigg].
    \end{split}
\end{equation}
On the other hand, the  conditional expectation of the above equation is such that
\[
\p^{(k)}_r\left(\left. \varsigma^{(h)}_{t-}=v,\ h\in [k]\right| \F_{t+\epsilon}\right)=\prod_{\text{$(w,c_w)\in {\rm spine}(v)$}\atop\text{${\rm gen}(w)\in [0,t)$}}\left(\frac{\xi_{c_w}}{ \bo{L}_{w}\cdot \bm \xi}\right)^k,
\]
 see \eqref{identitypk}. Thus, putting all pieces together and  applying again the Markov property, we deduce
\begin{equation}
    \begin{split}
   \E^{(k)}_r&\left[\mathbf{1}_{\{\tau_1\in I,\mathcal{P}_{\tau_1}=\bo{P},\bo{L}_{\tau_1}=\bm{\ell} , c(\varsigma^{(1)}_{t-})=i\}\cap B_{[t,t+\epsilon)}}\overline{g}_{k,[0,t)}\prod_{m=1}^d\E_m\left[e^{-\bm \theta \cdot \bo Z_{T-t-\epsilon}}\right]^{Z^{(m)}_{t-}-\delta_{i,m}} \right]\\
&\hspace{2cm}=\E^{(k)}_r\left[
  \prod_{m=1}^d\E_m\left[e^{-\bm \theta \cdot \bo Z_{T-t-\epsilon}}\right]^{Z^{(m)}_{t-}-\delta_{i,m}}\right.\\
  &\hspace{3.5cm}\left. \times  \sum_{ v\in \mathcal{N}^{(i)}_{t-}}\mathbb{P}^{(k)}_r\left[\mathcal{P}_{\tau_1}=\bo{P},\bo{L}_{\tau_1}=\bm{\ell}, c(v)=i, B_{[t,t+\epsilon)}\Big| \mathcal{F}_t\right]  \right].
\end{split}
\end{equation}
Finally, under the event that $\{v\in \mathcal{N}^{(i)}_{t-}\}$ we have
\[
\begin{split}
\mathbb{P}^{(k)}_r\left(\mathcal{P}_{\tau_1}=\bo{P},\bo{L}_{\tau_1}=\bm{\ell}, c(v)=i, B_{[t,t+\epsilon)}\Big| \mathcal{F}_t\right)&=\mathbb{P}^{(k)}_i\left(\mathcal{P}_{\epsilon}=\bo{P},\bo{L}_{\emptyset}=\bm{\ell}, B_{[0,\epsilon)}\right)\\
&=p_{i}(\bm \ell)\prod_{\substack{m\in [d]\\ g_m\neq 0}}\left[\frac{\xi _m}{\bm \ell \cdot \bm \xi}\right]^{\overline{a}_{m}}\ell_m^{\floor{g_m}}(1-e^{-\alpha_i\epsilon}),
\end{split}
\]
where the first identity follows from the branching Markov property at time $t$ together with the fact that there is only one branching event during the time interval $[t, t+\epsilon)$.

Observing that  the cardinality of $\mathcal{N}^{(i)}_{t-}$ is $Z^{(i)}_{t-}$, we use a similar argument as in  
\eqref{eqnChangeFromInfoUpTo_t} to deduce
\[
\begin{split}
	\E^{(k)}_r&\left[\mathbf{1}_{\{\tau_1\in I,\mathcal{P}_{\tau_1}=\bo{P},\bo{L}_{\tau_1}=\bm{\ell} , c(\varsigma^{(1)}_{t-})=i\}\cap B_{[t,t+\epsilon)}}\overline{g}_{k,[0,t)}\prod_{m=1}^d\E_m\left[e^{-\bm \theta \cdot \bo Z_{T-t-\epsilon}}\right]^{Z^{(m)}_{t-}-\delta_{i,m}} \right]\\
& \hspace{1cm}=\E^{(k)}_r\left[ Z^{(i)}_{t}\prod_{m=1}^d\E_m\left[e^{-\bm \theta \cdot \bo Z_{T-t-\epsilon}}\right]^{Z^{(m)}_{t}-\delta_{i,m}}\right]p_{i}(\bm \ell)\prod_{\substack{m\in [d]\\ g_m\neq 0}}\paren{\frac{\xi _m}{\bm \ell \cdot \bm \xi}}^{\overline{a}_{m}}\ell_m^{\floor{g_m}}(1-e^{-\alpha_i\epsilon}),
\end{split}
\]
where in the previous identity we have replaced $t-$ by $t$ since the process does not make jumps at fixed times. Thus, putting all pieces together, that is, we use the above formula with \eqref{eqnpropoFirstSplittingEventWithPartitionChildAndType} and \eqref{eqnpropoFirstSplittingEventWithPartitionChildAndType2}, and obtain 
\[
    \begin{split}
\Q^{(k),\bm \theta}_{T, r}&\left(\tau_1\in I,\mathcal{P}_{\tau_1}=\bo{P},\bo{L}_{\tau_1}=\bm{\ell} , c(\varsigma^{(1)}_{t-})=i, B_{[t,t+\epsilon)}\right)\\
& =\frac{\E^{(k)}_r\left[ Z^{(i)}_{t}\prod_{m=1}^d\E_m\left[e^{-\bm \theta \cdot \bo Z_{T-t-\epsilon}}\right]^{Z^{(m)}_{t}-\delta_{i,m}}\right]}{\E_r\left[N^{\floor{k}}_{T}e^{-\bm \theta\cdot \bo Z_{T}}\right]}(1-e^{-\alpha_i\epsilon})p_{i}(\bm \ell)\prod_{\substack{m\in [d]\\ g_m\neq 0}}\paren{\frac{\xi _m}{\bm \ell \cdot \bm \xi}}^{\overline{a}_{m}}\ell_m^{\floor{g_m}}\\
& \hspace{3cm} \times \widetilde{g}_{k,I}\prod_{\substack{m\in [d]\\ g_m\neq 0}}\prod_{q=1}^{g_m} \E_m\left[N^{\floor{a_{m,q}}}_{T-t-\epsilon}e^{-\bm \theta \cdot \bo Z_{T-t-\epsilon}}\right] \E_m\left[e^{-\bm \theta \cdot \bo Z_{T-t-\epsilon}}\right]^{\ell_m-g_m}\\
&=\frac{\E_r\left[Z^{(i)}_{t}\prod_{m=1}^d\E_m\left[e^{-\bm \theta \cdot \bo Z_{T-t-\epsilon}}\right]^{Z^{(m)}_{t}-\delta_{i,m}}\right]}{\E_r\left[N^{\floor{k}}_{T}e^{-\bm \theta\cdot \bo Z_{T}}\right]}(1-e^{-\alpha_i\epsilon})p_{i}(\bm \ell)\bm\ell^{\floor{\bo g}}\\\
& \hspace{3cm} \times \prod_{\substack{m\in [d]\\ g_m\neq 0}}\prod_{q=1}^{g_m} \E_m\left[N^{\floor{a_{m,q}}}_{T-t-\epsilon}e^{-\bm \theta \cdot \bo Z_{T-t-\epsilon}}\right] \E_m\left[e^{-\bm \theta \cdot \bo Z_{T-t-\epsilon}}\right]^{\ell_m-g_m}.\\
    \end{split}
\]Note that we changed from $\E^{(k)}_r$ to $\E_r$ in the last equality since the event under consideration does not involve the marks. 
The latter clearly implies our results by dividing by $\epsilon$ and taking limits as $\epsilon$ goes to 0; and by multiplying and dividing the quantity 
\[
\E_i\left[\bo L^{\floor{\bo g}}\prod_{m=1}^d\E_m\left[e^{-\bm \theta \cdot \bo Z_{T-t}}\right]^{L^{(m)}-g_m}\right]. 
\]
Recall that the limit can be taken inside the expectations since $ \bo Z$ is a Feller process with  c\`adl\`ag paths and by the dominated convergence theorem. This completes the proof.
\end{proof}



The previous result implies the following corollary that computes the  $\Q^{(k),\bm \theta }_{T,r}$-probability that  the  particle  carrying all marks dies at time $t$ and gives birth to $\bm \ell\in \z^d_+\setminus\{\bo 0\} $ offspring, and the marks follow a coloured partition with block sizes $(a_{m,q})_{m\in[d], q\in [g_m]}$.

\begin{coro}\label{coroSplittingEventWithPartitionChildAndType}Fix $k\geq 2$. 
Then we have 
	\begin{align*} 
	\Q^{(k),\bm \theta}_{T,r}&\left(\tau_1\in {\rm d}t, \#\mathcal{P}_{\tau_1}=(a_{m,q})_{m\in[d], q\in [g_m]}, {\bo L}_{\tau_1}=\bm{\ell} , c(\varsigma^{(1)}_{t-})=i\right)\\
	&=\frac{k!}{\prod_{m=1}^d\overline a_{m}!}\prod_{m=1}^d{\ell_m \choose g_m}\frac{\overline a_{m}!}{\prod_{q=1}^{g_m} a_{m,q}!}\frac{g_m!}{\prod_{n\geq 1}d_{m,n}!}\E_i\left[\bo L^{\floor{\bo g}}\prod_{m=1}^d\E_m\left[e^{-\bm \theta \cdot \bo Z_{T-t}}\right]^{L^{(m)}-g_m}\right] \\
	&\hspace{1cm}\times\frac{p_{i}(\bm \ell)\prod_{m=1}^d \E_m\left[e^{-\bm \theta \cdot \bo Z_{T-t}}\right]^{\ell_m-g_m}}{\E_i\left[\bo L^{\floor{\bo g}}\prod_{m=1}^d\E_m\left[e^{-\bm \theta \cdot \bo Z_{T-t}}\right]^{L^{(m)}-g_m}\right]}\E_r\left[ Z^{(i)}_{t}\prod_{m=1}^d\E_m\left[e^{-\bm \theta \cdot \bo Z_{T-t}}\right]^{Z^{(m)}_{t}-\delta_{i,m}}\right]\\
&\hspace{6.5cm} \times \frac{\prod_{m=1}^d\prod_{q=1}^{g_m} \E_m\left[N^{\floor{a_{m,q}}}_{T-t}e^{-\bm \theta \cdot \bo Z_{T-t}}\right]}{\E_r\left[N^{\floor{k}}_{T}e^{-\bm \theta\cdot \bo Z_{T}}\right]}\alpha_{i}{\rm d} t.\\
\end{align*}
\end{coro}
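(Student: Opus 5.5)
Corollary \ref{coroSplittingEventWithPartitionChildAndType} will be obtained from Proposition \ref{propoFirstSplittingEventWithPartitionChildAndType} by summing over all coloured partitions $\bo P$ with prescribed block sizes $(a_{m,q})_{m\in[d],q\in[g_m]}$. This mirrors the way Corollary \ref{coroGeneralRateWhenABranchingEventOccursKnowingInfo} was obtained from Lemma \ref{lemmaGeneralRateWhenABranchingEventOccursKnowingInfo}.

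The first step is to check that the right-hand side of \eqref{eqnpropoFirstSplittingEvent} depends on $\bo P$ only through $\bo g$ and the block sizes $a_{m,q}$. This is visible from the formula: the only $\bo P$-dependent factors are $\bm\ell^{\floor{\bo g}}$, the powers $\ell_m-g_m$, and $\prod_{m,q}\E_m[N^{\floor{a_{m,q}}}_{T-t}e^{-\bm\theta\cdot\bo Z_{T-t}}]$. Hence
\[
\Q^{(k),\bm \theta}_{T,r}\left(\tau_1\in {\rm d}t, \#\mathcal{P}_{\tau_1}=(a_{m,q}), {\bo L}_{\tau_1}=\bm{\ell} , c(\varsigma^{(1)}_{t-})=i\right)
\]
equals the common value from the proposition multiplied by the number of coloured partitions with these block sizes. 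By \eqref{eqnNumberOfPartitionsHavingGivenblockSizes}, this number is
\[
\frac{k!}{\prod_{m}\bigl(\prod_{q} a_{m,q}!\prod_{n\ge1}d_{m,n}!\bigr)}.
\]

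The second step is to rewrite the combinatorial factor to match the statement. We write
\[
\bm\ell^{\floor{\bo g}}=\prod_m \ell_m^{\floor{g_m}}=\prod_m \binom{\ell_m}{g_m}g_m!,
\]
and insert $\prod_m \overline a_m!$ in numerator and denominator. The product of the count from the first step with $\bm\ell^{\floor{\bo g}}$ then becomes
\[
\frac{k!}{\prod_m\overline a_m!}\prod_m\binom{\ell_m}{g_m}\frac{\overline a_m!}{\prod_q a_{m,q}!}\frac{g_m!}{\prod_n d_{m,n}!}.
\]
This is exactly the prefactor in the corollary. It also realises the four-stage uniform allocation of step (4) in Proposition \ref{forwconstr}: choosing the carrying children, assigning the group sizes to them, splitting the labels by type, and splitting the labels within each type.

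The remaining factors carry over verbatim from \eqref{eqnpropoFirstSplittingEvent}, including $\alpha_i$, the normalising factor $\E_i[\bo L^{\floor{\bo g}}\cdots]$, the term $\E_r[Z^{(i)}_t\cdots]$, and the denominator $\E_r[N^{\floor{k}}_Te^{-\bm\theta\cdot\bo Z_T}]$.

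The main (minor) obstacle is verifying the count \eqref{eqnNumberOfPartitionsHavingGivenblockSizes}. Blocks of the same colour and the same size are indistinguishable as sets, which produces the division by $d_{m,n}!$; blocks of different colours are distinguished by colour, so no further symmetry factor appears. Once this is settled, the proof is a purely algebraic regrouping.
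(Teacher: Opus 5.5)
Your proposal is correct and follows the paper's implicit route. The corollary comes from summing Proposition~\ref{propoFirstSplittingEventWithPartitionChildAndType} over the coloured partitions counted in \eqref{eqnNumberOfPartitionsHavingGivenblockSizes}, exactly as Corollary~\ref{coroGeneralRateWhenABranchingEventOccursKnowingInfo} was obtained from Lemma~\ref{lemmaGeneralRateWhenABranchingEventOccursKnowingInfo}, and your regrouping via $\bm\ell^{\floor{\bo g}}=\prod_m\binom{\ell_m}{g_m}g_m!$ is precisely what turns that count into the stated prefactor.
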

With all these results in hand, we are now ready to provide a complete description of the evolution of the process  under $\Q^{(k),\bm \theta}_{T,r}$. In other words, we can complete the proof of the forward construction.
\begin{proof}[Proof of Proposition \ref{forwconstr}] As we mentioned before, the proof follows from previous intermediate results. More precisely, we start with a particle  with $k$ marks.  Step (2) follows from the Markov branching property, see Lemma \ref{lemmaMrkovPropertyUnderQ}. Proposition \ref{propoFirstSplittingEventWithPartitionChildAndType} establishes the joint distribution of the first spine splitting event, given that the spine had type 
$i$ along with a specific partition choice and $\bm \ell$ offspring. Meanwhile, Lemma \ref{lemmaGeneralRateWhenABranchingEventOccursKnowingInfo} together with Remark \ref{remark1} provide the rates of births off the spine. Both results correspond to Step (3). Corollary \ref{coroSplittingEventWithPartitionChildAndType} computes the probability that particle  carrying all marks dies at time $t$ and gives birth to $\bm \ell\in \z^d_+\setminus\{\bo 0\} $ offspring, and the marks follow a coloured partition with block sizes $(a_{m,q})_{m\in[d], q\in [g_m]}$ explaining  Step (4). Finally  Step (5)
follows from Lemma \ref{lemmaLawOfIndividualsWithNoMarks}.
\end{proof}

\subsection{Joint law of spine splitting times}\label{subsectionJointLawSpineSplittingTimes}

This subsection  generalises Proposition \ref{propoFirstSplittingEventWithPartitionChildAndType} by deriving  the joint law of the spine splitting times,  the associated partition process of $[k]$, the types of  individuals giving birth immediately before each splitting times  and  their offspring configurations.

Recall from \eqref{deltatn}  that  $M$ denotes the number of splitting events required for the 
$k$ marks, initially  carried by a single individual, to separate into 
$k$ distinct individuals, each carrying   one mark.  Fix $n\leq k-1$, and consider the event $\{M=n\}$. Let $0<\tau_1<\cdots <\tau_n<T$, denote the successive splitting times  before time $T$.  At each splitting time $\tau_h$, a single individual on the spine, whose vertex we denote by $v(h)$, gives birth, producing $\bo L_{v(h)}$ new offspring. For each $h\in[n]$, let   $ C_h$ denote the type of the spine individual $v(h)$, and let $\mathcal{P}_h$ be the corresponding partition of $[k]$. We write
 $\bo P_h=(P_{h,1},\ldots, P_{h,d})$, where for each $m\in [d]$, $P_{h,m}=\{A_{h,m,q}\}_{q\in [g_{h,m}] }$, with $g_{h,m}$  the number of blocks of type $m$ created at the $h$-th splitting event. Set $\bo g_h=(g_{h,1},\ldots,g_{h,d})$.

For convenience, recall the event,  
 \[
\Delta_T(n)=\bigcap_{h\in [n]}\left\{\tau_h\in {\rm d}t_h,\mathcal{P}_{h}=\bo{P}_h,\bo{L}_{v(h)}=\bm{\ell}_h , C_{h}=i_h, M=n \right\},
\]
where $0<t_1<t_2<\cdots <t_n<T$. Here $\bm{\ell}_h=(\ell_{h,1},\ldots,\ell_{h,d})\in\mathbb{Z}_+^d$ denotes the offspring of  $v(h)$ and satisfies $\bo g_h\le \bm{\ell}_h$ componentwise, since each block corresponds to at least one child. The parent type is $i_h\in[d]$.

Finally, for each block $A_{h,m,q}$, we recall  that $k_{v(h, m,q)}=\textrm{card}\{A_{h,m,q}\}$, the number of marks carried by  the descendant of $v(h)$ of type $m$ immediately after time $\tau_h$. This descendant is denoted by $v(h,m,q)$. 

Our next result  characterises, under $\Q^{(k),\bm \theta }_{T,r}$,  the joint distribution of the splitting times, partitions, parental types, and offspring configurations under the conditioning $M=n$.
\begin{propo}\label{propoJointSplittingEventWithPartitionChildAndType}
For $k\geq 2$, $r\in [d]$,  $n\in\{1,\ldots, k-1\}$, and $t_0:=0<t_1<\cdots < t_n<T$, we have
\begin{equation}\label{eqnJointSplittingEventWithPartitionChildAndTypeV2}
	\begin{split}
		\Q^{(k),\bm \theta }_{T,r}&\Big(\Delta_T(n)\Big) = \prod_{h=1}^n\prod_{m=1}^d\E_{m}\left[e^{-\bm \theta \cdot \bo Z_{T-t_h}}\right]^{\ell_{h,m} -g_{h,m}}p_{i_h}(\bm \ell_{h})\bm \ell_{h}^{\floor{\bo g_h}}\\
		&\hspace{.5cm}  \times 
		\prod_{h=0}^{n-1}\prod_{m=1}^d\prod_{\substack{ q\leq g_{h,m}:\\ k_{v(h,m,q)}\geq 2}}\E_{m}\left[Z^{(c(v(h,m,q)))}_{t_{v(h,m,q)}-t_h} \prod_{j=1}^d\E_{j}\left[e^{-\bm \theta\cdot \bo Z_{T-t_{v(h,m,q)}}}\right]^{ Z^{(j)}_{t_{v(h,m,q)}-t_h}-\delta_{c(v(h,m,q)),j}}\right]\\
	& \hspace{3cm} \times
	\frac{\prod_{h=1}^{n}\prod_{m=1}^{d}\E_{m}\left[N_{T-t_{h}}e^{-\bm \theta \cdot \bo Z_{T-t_{h}}}\right]
		^{\#\{ q\leq g_{h,m}:k_{v(h,m,q)}=1\}}
	}{\E_{r}\left[N_{T}^{\floor{k}}e^{-\bm \theta\cdot \bo Z_{T}}\right]} \prod_{h=1}^n\alpha_{i_{h}} {\rm d}t_h,
\end{split}
\end{equation}where  $t_{v(h,m,q)}$ and $c(v(h,m,q))$ are the spine splitting times  and type associated to the vertex $v(h,m,q)$.  When  $h=0$, we observe that $v(0)=\emptyset$, $g_{0,m}=\delta_{r,m}$ and $v(0, r, 1)=v(1)$. 

\end{propo}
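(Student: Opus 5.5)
We prove Proposition \ref{propoJointSplittingEventWithPartitionChildAndType} by induction on the number $k$ of marks, iterating the first splitting event. The tools are Proposition \ref{propoFirstSplittingEventWithPartitionChildAndType} (joint law of $(\tau_1,\mathcal P_{\tau_1},\bo L_{\tau_1},c(\varsigma^{(1)}_{t-}))$) and the Markov branching property of Lemma \ref{lemmaMrkovPropertyUnderQ}. The main observation is that the formula in Proposition \ref{propoFirstSplittingEventWithPartitionChildAndType} contains a factor $\prod_{m,q}\E_m[N^{\floor{a_{m,q}}}_{T-t}e^{-\bm\theta\cdot\bo Z_{T-t}}]$. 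Each such factor is exactly the normalising constant of the measure $\Q^{(a_{m,q}),\bm\theta}_{T-t,m}$ governing the subtree of the child $v(1,m,q)$ after time $t_1$. Multiplying the first-split density by the conditional densities of the later splits in each subtree will therefore make these normalisations telescope.

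Base case $k=2$ (or generally $n=1$, where all blocks are singletons): only $n=1$ is possible. The first split of the root block is preceded by a spine segment of type $r$ started at time $0$. Proposition \ref{propoFirstSplittingEventWithPartitionChildAndType} with $t=t_1$, $i=i_1$ gives
\[
\alpha_{i_1}p_{i_1}(\bm\ell_1)\bm\ell_1^{\floor{\bo g_1}}\prod_m\E_m[e^{-\bm\theta\cdot\bo Z_{T-t_1}}]^{\ell_{1,m}-g_{1,m}}\,\E_r\Big[Z^{(i_1)}_{t_1}\prod_j\E_j[\cdots]^{Z^{(j)}_{t_1}-\delta_{i_1,j}}\Big]\frac{\prod_{m,q}\E_m[N_{T-t_1}e^{-\bm\theta\cdot\bo Z_{T-t_1}}]}{\E_r[N^{\floor{k}}_Te^{-\bm\theta\cdot\bo Z_T}]}.
\]
The middle expectation is precisely the $h=0$ factor, since $v(0,r,1)=v(1)$, $c(v(1))=i_1$ and $t_0=0$. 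The last factor is the singleton contribution. The event $\{M=1\}$ adds no constraint, because singleton blocks cannot split further.

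Inductive step: condition on $\mathcal F^{(k)}_{\tau_1}$ on the event $\{\tau_1\in{\rm d}t_1,\mathcal P_1=\bo P_1,\bo L_{v(1)}=\bm\ell_1,C_1=i_1\}$. By Lemma \ref{lemmaMrkovPropertyUnderQ}, the subtrees rooted at the children $v(1,m,q)$ are independent and distributed as $\Q^{(k_{v(1,m,q)}),\bm\theta}_{T-t_1,m}$. The remaining event $\Delta_T(n)$ then factorises into independent events $\Delta^{(m,q)}_{T-t_1}(n_{m,q})$ for the blocks with $k_{v(1,m,q)}\ge2$, with time-shifted split times $t_h-t_1$ and $\sum n_{m,q}=n-1$. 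A singleton block requires nothing, since its subtree under $\Q^{(1)}$ has $M=0$ automatically.

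Apply the induction hypothesis to each subtree with root type $m$, horizon $T-t_1$, and shifted times. Its $h=0$ factor becomes
\[
\E_m\Big[Z^{(c(v(1,m,q)))}_{t_{v(1,m,q)}-t_1}\prod_j\E_j[e^{-\bm\theta\cdot\bo Z_{T-t_{v(1,m,q)}}}]^{\cdots}\Big],
\]
which is the $h=1$ factor of the claimed formula. Its denominator $\E_m[N^{\floor{k_{v(1,m,q)}}}_{T-t_1}e^{-\bm\theta\cdot\bo Z_{T-t_1}}]$ cancels the corresponding numerator factor from Proposition \ref{propoFirstSplittingEventWithPartitionChildAndType}. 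Since $T-t_1-(t_h-t_1)=T-t_h$, all other factors keep the right arguments. Collecting the products over subtrees reorganises the indices $h$ into the global indexing of splitting events, which gives the formula of the Proposition.

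The main obstacle is bookkeeping rather than analysis. First, the split events of different subtrees must interleave into the global ordering $t_1<\cdots<t_n$ consistently, with labels $v(h,m,q)$ matching. Second, the conditioning on $\mathcal F^{(k)}_{\tau_1}$ at a random time needs the strong Markov version of Lemma \ref{lemmaMrkovPropertyUnderQ}. I would justify this by working with the $\epsilon$-discretisation $\tau_1\in[t_1,t_1+\epsilon)$, applying the lemma at the deterministic time $t_1+\epsilon$ on $B_{[t_1,t_1+\epsilon)}$, and letting $\epsilon\to0$ by the Feller and dominated convergence arguments used in Proposition \ref{propoFirstSplittingEventWithPartitionChildAndType}. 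Third, the labelled sub-partitions must be identified: the $\Q^{(j)}$ subtree relabels its marks by $[j]$, and the order-preserving bijection with $A_{1,m,q}$ gives this identification.
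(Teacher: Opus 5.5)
Your proposal is correct and follows essentially the paper's own argument. Both combine three ingredients:
\begin{itemize}
\item the first-split formula of Proposition \ref{propoFirstSplittingEventWithPartitionChildAndType};
\item the Markov branching property at $\tau_1$, under which the blocks evolve as independent $\Q^{(k_{v(1,m,q)}),\bm\theta}_{T-t_1,m}$ subtrees, with singleton blocks contributing nothing further;
\item the cancellation of the factors $\E_m[N^{\floor{k_{v(1,m,q)}}}_{T-t_1}e^{-\bm\theta\cdot\bo Z_{T-t_1}}]$ against the subtree normalisations, so that each subtree's $h=0$ factor becomes a global $h=1$ factor.
\end{itemize}
The differences are cosmetic. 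You use strong induction on $k$ rather than the paper's induction on $n$. You also make explicit an $\epsilon$-discretisation to justify applying the Markov property at the random time $\tau_1$, which the paper uses without comment.
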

\begin{remark}
	We note that the previous formula aligns with the single-type case, see identity (41) in Lemma 3.8 of \cite{MR4718398}. 
	Some terms clearly correspond to the single-type case, for example, the first line in \eqref{eqnJointSplittingEventWithPartitionChildAndTypeV2}, the rates simplifies to $\alpha^n$, as well as $\E_{r}\left[N_{T}^{\floor{k}}e^{-\bm \theta\cdot \bo Z_{T}}\right]$  simplifies to $\E\left[Z_{T}^{\floor{k}}e^{- \theta Z_{T}}\right]$ (note that in the single-type case the total size $N_{T}$ at generation $T$ is simply   $Z_{T}$). 
	Thus, it is enough to observe that (using the obvious notation which does not contains the types)
	\begin{equation}\label{eqnCheckingJointLawConcidesWithUnitype}
	\begin{split}
	\prod_{h=0}^{n-1}\prod_{\substack{\ell\leq g_h\\ k_{v(h,\ell)}\geq 2}}
		&\E\left[Z_{t_{v(h,\ell)}-t_{h}} \E\left[e^{-\theta Z_{T-t_{v(h,\ell)}}}\right]^{Z_{t_{v(h,\ell)}-t_{h}}-1}\right]\\
		&\times\left(\prod_{h=1}^n \E\left[Z_{T-t_{h}}e^{-\theta Z_{T-t_{h}}}\right]
		^{\#\{ \ell\leq g_{h}:k_{v(h,\ell)}=1\}}\right) = F'_T(e^{-\theta}) \prod_{h=1}^n F'_{T-t_h}(e^{-\theta})^{g_h-1}.
\end{split} 
	\end{equation}
	For the proof of the previous identity, we refer to the \refappendix{Appendix}.
	\end{remark}

\begin{proof}
The proof is by induction on $n$. The case $n=1$ follows from Proposition \ref{propoFirstSplittingEventWithPartitionChildAndType} with $\bo{P}_1$ being only blocks with singletons. That is in the first spine splitting events all spines are distinct.  Thus from \eqref{eqnpropoFirstSplittingEvent}, we have
\begin{align*}
	\Q^{(k),\bm \theta }_{T,r}&\Big(\tau_1\in {\rm d}t_1,\mathcal{P}_{1}=\bo{P}_1,\bo{L}_{v(1)}=\bm{\ell}_1, C_{1}=i_1,M=1\Big) = \alpha_{i_1} p_{i_1}(\bm \ell_1)\bm \ell_1^{\floor{\bo g_1}}\\
	& \qquad \times\left(\prod_{m=1}^{d}\E_{m}\left[e^{-\bm \theta \cdot \bo Z_{T-t_1}}\right]^{\ell_{1,m}- g_{1,m}} \right)
	\E_{r}\left[Z^{(i_1)}_{t_1}\prod_{m=1}^{d}\E_{m}\left[e^{-\bm \theta\cdot \bo Z_{T-t_1}}\right]^{ Z^{(m)}_{t_1}-\delta_{i_1,m}}\right]\\
	& \qquad \hspace{6cm}\times
	\frac{\prod_{m=1}^d\E_m\left[N_{T-t_1}e^{-\bm \theta \cdot \bo Z_{T-t_1}}\right]^{g_{1,m}}}{\E_{r}\left[N_{T}^{\floor{k}}e^{-\bm \theta\cdot \bo Z_{T}}\right]}{\rm d}t_1,
\end{align*}
which coincides with identity \eqref{eqnJointSplittingEventWithPartitionChildAndTypeV2}  by observing 
\begin{align*}
 \prod_{m=1}^d\E_m\left[N_{T-t_1}e^{-\bm \theta \cdot \bo Z_{T-t_1}}\right]^{g_{1,m}}=\prod_{m=1}^d\E_m\left[N_{T-t_{1}}e^{-\bm \theta \cdot \bo Z_{T-t_{1}}}\right]^{\#\{ q\in [g_{1,m}]:k_{v(1,m,q)}=1\}},
\end{align*}
since in our case  $k_{v(1,m,q)}=1$ for all $m\in[d]$ and $q\in[g_{1,m}]$.

Next, we consider the case $n\geq 2$. We assume that \eqref{eqnJointSplittingEventWithPartitionChildAndTypeV2} holds for $n-1$, and use Lemma \ref{lemmaMrkovPropertyUnderQ} together with the Markov property, under $\Q^{(k),\bm \theta }_{T,r}$, at time $\tau_1$. 
Indeed, after the first spine splitting event, the spines in each of the $\sum_m g_{1,m}$ blocks created, namely $\{A_{1,m,q}\}_{q\in [g_{1,m}], m\in [d] }$, each containing $k_{v(1,m,q)}=|A_{1,m,q}|$ marks, behave independently from one another and as if under $\Q^{(k_{v(1,m,q)}),\bm \theta }_{T-t_1,m}$. 
It is important to note that some blocks $\{A_{1,m,q}\}_{q\in [g_{1,m}], m\in [d] }$ may carry only one mark.

Consider each of the subsequent splitting events at times $\tau_2,\ldots, \tau_n$. 
They correspond to the spine splitting events of some sub-block of the ones just created $\{A_{1,m,q}\}_{q\in [g_{1,m}], m\in [d] }$, breaking into smaller blocks.
Since we want to keep track of which block has splitted at time $\tau_h$, we reindex $(\tau_h)_{2\leq h\leq n}$ as follows
\[
\Big\{(\tau^{(m,q)}_{h})_{h\in [n^{(m,q)}]}, q\in [g_{1,m}],m\in [d]\Big\},
\]
where $n^{(m,q)}\leq k_{v(1,m,q)}-1$, denotes the number of splitting events involving a sub-block of $A_{1,m,q}$.

Observe that $\sum_{m\in [d]} \sum_{q\in[g_{1,m}]} n^{(m,q)}=n-1$, since there are only  $n-1$ remaining spine splitting events.
That is, if we follow the spines in block $A_{1,m,q}$ (and the subsequent quantities will depend on $(m,q)$), the time $\tau^{(m,q)}_{h}$ is the $h$-th time,  after $\tau_1$,  that $A_{1,m,q}$ breaks into smaller sub-blocks, for $1\leq h\leq  n^{(m,q)}$. Similarly, we 
denote its corresponding partition process and the process that codes the types (or colors) by $(\mathcal{P}^{(m,q)}_h)_{h\in  [n^{(m,q)}]}$ and $(C^{(m,q)}_h)_{h\in  [n^{(m,q)}]}$, respectively.
Following the same reasoning, we reindex the values $(t^{(m,q)})_{h\in [n^{(m,q)}]}$, $(\bm \ell^{(m,q)}_h)_{h\in [n^{(m,q)}]}$, $(\bo g^{(m,q)}_h)_{h\in [n^{(m,q)}]}$, and $(i^{(m,q)}_{h})_{h\in [n^{(m,q)}]}$.
We also consider also the partitions $(\bo P^{(m,q)}_h)_{h\in [n^{(m,q)}]\cup\{0\}}$ where $\bo P^{(m,q)}_h=( P^{(m,q)}_{h,m'})_{m'\in[d]}$.
Hence $P^{(m,q)}_{0, m'}=\{A_{1,m,q} \}$ since no splitting has occurred, whilst $P^{(m,q)}_{n^{(m,q)}}$ consists only on the singletons of $A_{1,m,q}$. 

\begin{center}
	\begin{figure}
		\includegraphics[width=.6\textwidth]{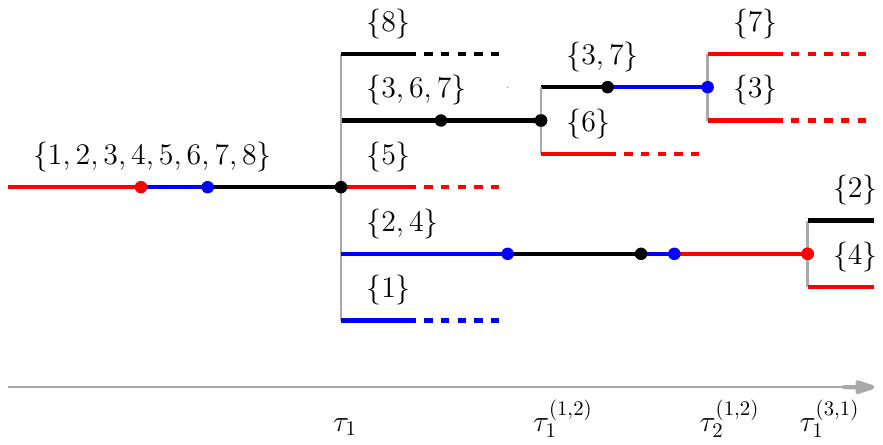}\caption{Illustration of the vertices carrying marks, in a 3 type tree, starting from 8 marks. The type 1 individuals are depicted in Black, type 2 in Red and type 3 in Blue. At the first spine splitting event, occurring at time $\tau_1-$, the vertex $v(1)$ type $i_1=1$, has $\bo L_{v(1)}$ children, and from them $\bo g_1=(2,1,2)$ carry at least one mark. The partition $\bo P_1$ consists of $P_{1,1}=\{A_{1,1,1},A_{1,1,2}\}$, $P_{1,2}=\{A_{1,2,1}\}$ and $P_{1,3}=\{A_{1,3,1},A_{1,3,2}\}$.  Only two blocks will split at future times, namely $A_{1,1,2}$ and $A_{1,3,1}$. At time $\tau^{(1,2)}_{1}-$ the individual $v(1,1,2)$, which has type $i^{(1,2)}_1=1$, undergoes a spine splitting event, having $\bm \ell^{(1,2)}_1$ offspring, and $\bo g^{(1,2)}_1=(1,1,0)$ carry at least one mark. The partition that it generates is $\bo P^{(1,2)}_1$ consisting of blocks $P^{(1,2)}_{1,1}=\{A^{(1,2)}_{1,1,1}\}$ and $P^{(1,2)}_{1,2}=\{A^{(1,2)}_{1,2,1}\}$. For block $A_{1,1,2}$ there are $n^{(1,2)}=2$ spine splitting events after $\tau_1$. The second one, occurs at time $\tau^{(1,2)}_2$, from an individual type $i^{(1,2)}_2=3$, giving rise to a partition $\bo P^{(1,2)}_2$ with $P^{(1,2)}_{2,2}=\{A^{(1,2)}_{2,2,1},A^{(1,2)}_{2,2,2}\}$.  }\label{figNotationInductionJointSplittingTimes}
	\end{figure}
\end{center}

Recall the definition  of $\Delta_T(n)$ in \eqref{deltatn} and set, for $t\in[0,T)$,
\[
\begin{split}
\Delta_{T-t}^{m,q}:=&\bigcap_{h\in [n^{(m,q)}]}\left\{\tau^{(m,q)}_h\in {\rm d}t^{(m,q)}_h-t,\mathcal{P}^{(m,q)}_h=\bo{P}^{(m,q)}_h,\bo{L}^{(m,q)}_h=\bm \ell^{(m,q)}_h,\right.
\\
&\hspace{6cm} \left .C^{(m,q)}_{h}=i^{(m,q)}_h,M^{(m,q)}=n^{(m,q)} \right\},
 \end{split}
 \]
 where $\bo{L}^{(m,q)}_h$ and $M^{(m,q)}$ denote the offspring of the individual involved in the $h$-th splitting event and  the number of splits associated to the block $A_{1,m,q}$. Since we are separating the first spine splitting  and the subsequent spine splitting events, it follows 
\[
\Delta_T(n)=\left\{\tau_1\in {\rm d}t_1,\mathcal{P}_{1}=\bo{P}_1,\bo{L}_{v(1)}=\bm{\ell}_1 , C_{1}=i_1 \right\}\bigcap_{m\in[d]; q\in [g_{1,m}]} \Delta_{T}^{m,q}.
\]
Thus from the Markov property (see Lemma \ref{lemmaMrkovPropertyUnderQ}), conditionally on the event 
\[
\left\{\tau_1\in {\rm d}t_1,\mathcal{P}_{1}=\bo{P}_1,\bo{L}_{v(1)}=\bm{\ell}_1 , C_{1}=i_1 \right\},
\]
the spines in each block $A_{1,m,q}$, behave as if under $\Q^{(k_{v(1,m,q)}),\bm \theta }_{T-t_1,m}$. 
Since $\tau_1$ is a splitting event, then $[k]$ is broken into smaller sub-blocks and hence $k_{v(1,m,q)}<k$ and so $n^{(m,q)}<n$ for any $m\in[d]$ and $q\in[g_{1,m}]$. When blocks are singletons, we observe that there is no subsequent splitting events implying that in this case the spine behaves as  if under $\Q^{(1),\bm \theta }_{T-t_1,m}$, for $m\in[d]$. Thus we may apply the induction hypotheses \emph{only} for the blocks that carry at least two marks, i.e. when $k_{v(1,m,q)}\geq 2$. Hence,
\[
\begin{split}
	&\Q^{(k),\bm \theta }_{T,r}\Big(\left.\Delta_T^{m,q}\right| \tau_1\in {\rm d}t_1,\mathcal{P}_{1}=\bo{P}_1,\bo{L}_{v(1)}=\bm \ell_1 , C_{1}=i_1\Big) =\Q^{(k_{v(1,m,q)}),\bm \theta }_{T-t_1,m} \left(\Delta_{T-t_1}^{m,q}\right)\\
	& = \prod_{h=1}^{n^{(m,q)}}\prod_{m'=1}^{d}\E_{m'}\left[e^{-\bm \theta \cdot \bo Z(T-t^{(m,q)}_h)}\right]^{\ell^{(m,q)}_{h,m'}-g^{(m,q)}_{h, m'}}p_{i^{(m,q)}_h}(\bm \ell^{(m,q)}_h)\left(\bm \ell^{(m,q)}_h\right)^{\floor{\bo g^{(m,q)}_h}}\\
	& \quad \times \prod_{h=0}^{n^{(m,q)}-1}
	\prod_{m'=1}^{d}\prod_{\substack{q'\leq g^{(m,q)}_{h, m'} \\ k^{(m, q)}_{v(h,m',q')}\geq 2}}\E_{m'}\left[Z^{(i^{(m,q)}_{v(h,m',q')})}\left(t^{(m,q)}_{v(h,m',q')}-t^{(m,q)}_{h}\right)\right. \\
	& \qquad \hspace{3cm}  \times \left.\prod_{j=1}^{d}\E_{j}\left[e^{-\bm \theta\cdot \bo Z(T-t^{(m,q)}_{v(h,m',q')})}\right]^{ Z^{(j)}\big(t^{(m,q)}_{v(h,m',q')}-t^{(m,q)}_{h}\big)-\delta\left(i^{(m,q)}_{v(h,m',q')}, j\right)}\right]\\
	&  \quad   \times
	\frac{\prod_{h=1}^{n^{(m,q)}} \prod_{m'=1}^d\E_{m'}\left[N(T-t^{(m,q)}_{h})e^{-\bm \theta \cdot \bo Z(T-t^{(m,q)}_{h})}\right]
		^{\#\{ q'\leq g^{(m,q)}_{h, m'}:k^{(m,q)}_{v(h,m',q')}=1\}}
	}{\E_{m}\left[N_{T-t_1}^{\floor{k_{v(1,m,q)}}}e^{-\bm \theta\cdot \bo Z_{T-t_1}}\right]} \prod_{h=1}^{n^{(m,q)}}\alpha_{i^{(m,q)}_{h}} {\rm d}t^{(m,q)}_h,
\end{split}
\]
where $k^{(m,q)}_{v(h,m',q')}$ denotes the number of marks following the $q'$-th individual of type $m'$, occurring in the $h$-th splitting event in block $A_{1,m,q}$. Such individual splits at  time $t^{(m,q)}_{v(h,m',q')}$ and has type $i^{(m,q)}_{v(h,m',q')}$.  We also observe, for the sake of simplicity, that we have used the notation $\bo Z(\cdot), Z(\cdot), N(\cdot)$ and $\delta(\cdot, \cdot)$ instead of $\bo Z_{\cdot}, Z_{\cdot}, N_{\cdot}$ and $\delta_{\cdot, \cdot}$, respectively.

By the Markov property, which implies the conditional independence of the splitting events $(\Delta_T^{m,q})_{q\in[g_{1,m}],m\in[d]}$, we have
\begin{equation}\label{eqnJointLawOfSplittingEventsAfterTheFirstOne}
	\begin{split}
	\Q^{(k),\bm \theta }_{T,r}&\paren{\left.\bigcap_{m\in [d];q\in [g_{1, m}]} \Delta_T^{m,q}\mathbf{1}_{\{k_{v(1,m,q)}\geq 2\}}\right| \tau_1\in {\rm d}t_1,\mathcal{P}_{1}=\bo{P}_1,\bo{L}_{v(1)}=\bm{\ell}_1 , C_{1}=i_1}\\
	& =\prod_{m=1}^d\prod_{q=1 }^{g_{1,m}}\Q^{(k_{v(1,m,q)}),\bm \theta }_{T-t_1,m} \left(
	\Delta_{T-t_1}^{m,q}\right)
\\
& = \prod_{m=1}^d\prod_{q=1 }^{g_{1,m}}
\prod_{h=1}^{n^{(m,q)}}\prod_{m'=1}^{d}\E_{m'}\left[e^{-\bm \theta \cdot \bo Z(T-t^{(m,q)}_h)}\right]^{\ell^{(m,q)}_{h,m'}-g^{(m,q)}_{h, m'}}p_{i^{(m,q)}_h}(\bm \ell^{(m,q)}_h)\left(\bm \ell^{(m,q)}_h\right)^{\floor{\bo g^{(m,q)}_h}}\\
& \quad \times \prod_{m=1}^d\prod_{q=1 }^{g_{1, m}}
\prod_{h=0}^{n^{(m,q)}-1}
	\prod_{m'=1}^{d}\prod_{\substack{q'\leq g^{(m,q)}_{h, m'} \\ k^{(m, q)}_{v(h,m',q')}\geq 2}}\E_{m'}\left[Z^{(i^{(m,q)}_{v(h,m',q')})}\left(t^{(m,q)}_{v(h,m',q')}-t^{(m,q)}_{h}\right)\right. \\
& \qquad    
\hspace{2cm}\times \left.\prod_{j=1}^{d}\E_{j}\left[e^{-\bm \theta\cdot \bo Z(T-t^{(m,q)}_{v(h,m',q')})}\right]^{ Z^{(j)}\big(t^{(m,q)}_{v(h,m',q')}-t^{(m,q)}_{h}\big)-\delta\left(i^{(m,q)}_{v(h,m',q')}, j\right)}\right]\\
&  \quad   \times\prod_{m=1}^d\prod_{\substack{q\leq g_{1,m}\\ k_{v(1,m,q)}\geq 2}}
\frac{\prod_{h=1}^{n^{(m,q)}} \prod_{m'=1}^d\E_{m'}\left[N(T-t^{(m,q)}_{h})e^{-\bm \theta \cdot \bo Z(T-t^{(m,q)}_{h})}\right]
		^{\#\{ q'\leq g^{(m,q)}_{h, m'}:k^{(m,q)}_{v(h,m',q')}=1\}}
	}{\E_{m}\left[N_{T-t_1}^{\floor{k_{v(1,m,q)}}}e^{-\bm \theta\cdot \bo Z_{T-t_1}}\right]}\\
	& \qquad\hspace{8cm}\times \prod_{m=1}^d\prod_{q=1 }^{g_{1,m}}
	\prod_{h=1}^{n^{(m,q)}}\alpha_{i^{(m,q)}_{h}} {\rm d}t^{(m,q)}_h.
\end{split}
\end{equation}Note that the previous term contains all the splitting events at times $(\tau_h)_{2\leq h\leq n}$, thus
the first line on the right-hand side, the branching rates and the differentials can be rewritten as follows
\begin{align*}	
&\prod_{m=1}^d\prod_{q=1 }^{g_{1,m}}
\prod_{h=1}^{n^{(m,q)}}\prod_{m'=1}^{d}\E_{m'}\left[e^{-\bm \theta \cdot \bo Z(T-t^{(m,q)}_h)}\right]^{\ell^{(m,q)}_{h,m'}-g^{(m,q)}_{h, m'}}p_{i^{(m,q)}_h}(\bm \ell^{(m,q)}_h)\left(\bm \ell^{(m,q)}_h\right)^{\floor{\bo g^{(m,q)}_h}}\alpha_{i^{(m,q)}_{h}} {\rm d}t^{(m,q)}_h\\
&\hspace{5cm}=\prod_{h=2}^n\prod_{m'=1}^d\E_{m'}\left[e^{-\bm \theta \cdot \bo Z_{T-t_h}}\right]^{ \ell_{h, m'} - g_{h, m'}}p_{i_h}(\bm \ell_h)\bm \ell_h^{\lfloor\bo g_h\rfloor}\alpha_{i_{h}} {\rm d}t_h. 
\end{align*}
Similarly, for the ratio on the last line of \eqref{eqnJointLawOfSplittingEventsAfterTheFirstOne} we have 
\begin{align*}
& \prod_{m=1}^d\prod_{\substack{q\leq g_{1,m}\\ k_{v(1,m,q)}\geq 2}}
\frac{\prod_{h=1}^{n^{(m,q)}} \prod_{m'=1}^d\E_{m'}\left[N(T-t^{(m,q)}_{h})e^{-\bm \theta \cdot \bo Z(T-t^{(m,q)}_{h})}\right]
		^{\#\{ q'\leq g^{(m,q)}_{h, m'}:k^{(m,q)}_{v(h,m',q')}=1\}}
	}{\E_{m}\left[N_{T-t_1}^{\floor{k_{v(1,m,q)}}}e^{-\bm \theta\cdot \bo Z_{T-t_1}}\right]}\\
&\hspace{5cm} = \frac{\prod_{h=2}^n \prod_{m=1}^d\E_m\left[N_{T-t_{h}}e^{-\bm \theta \cdot \bo Z_{T-t_{h}}}\right]
	^{\#\{ q\leq g_{h, m}\ :\ k_{v(h,m,q)}=1\}}
}{\prod_{m=1}^d\prod_{\substack{q\leq g_{1,m}\\ k_{v(1,m,q)}\geq 2}} \E_{m}\left[N_{T-t_1}^{\floor{k_{v(1,m,q)}}}e^{-\bm \theta\cdot \bo Z_{T-t_1}}\right]}.
\end{align*}Again, the numerator of the above equation appears since all of the spine splitting times $(\tau_h)_{2\leq h\leq n}$ are being considered. Again,  for the remaining term in \eqref{eqnJointLawOfSplittingEventsAfterTheFirstOne}, one can show that
\begin{align*}
	&  \prod_{m=1}^d\prod_{q=1 }^{g_{1, m}}
\prod_{h=0}^{n^{(m,q)}-1}
	\prod_{m'=1}^{d}\prod_{\substack{q'\leq g^{(m,q)}_{h, m'}: \\ k^{(m, q)}_{v(h,m',q')}\geq 2}}\E_{m'}\left[Z^{(i^{(m,q)}_{v(h,m',q')})}\left(t^{(m,q)}_{v(h,m',q')}-t^{(m,q)}_{h}\right)\right. \\
& \qquad    
\hspace{3cm}\times \left.\prod_{j=1}^{d}\E_{j}\big[e^{-\bm \theta\cdot \bo Z(T-t^{(m,q)}_{v(h,m',q')})}\big]^{ Z^{(j)}\big(t^{(m,q)}_{v(h,m',q')}-t^{(m,q)}_{h}\big)-\delta\left(i^{(m,q)}_{v(h,m',q')}, j\right)}\right]\\
	& \hspace{1cm}= \prod_{h=1}^{n-1}\prod_{m=1}^d\prod_{\substack{ q\leq g_{h,m}:\\ k_{v(h,m,q)}\geq 2}}\E_{m}\left[Z^{(c(v(h,m,q)))}_{t_{v(h,m,q)}-t_h} \prod_{m'=1}^d\E_{m'}\left[e^{-\bm\theta\cdot \bo Z_{T-t_{v(h,m,q)}}}\right]^{ Z^{(m')}_{t_{v(h,m,q)}-t_h}- \delta_{c(v(h,m,q)), m'}}\right].
\end{align*}Joining the previous computations we obtain
\begin{equation}\label{eqnJointLawOfSplittingEventsAfterTheFirstOneV2}
	\begin{split}
	\Q^{(k),\bm \theta }_{T,r}&\paren{\left.\bigcap_{m\in [d];q\in [g_{1, m}]} \Delta_T^{m,q}\mathbf{1}_{\{k_{v(1,m,q)}\geq 2\}}\right| \tau_1\in {\rm d}t_1,\mathcal{P}_{1}=\bo{P}_1,\bo{L}_{v(1)}=\bm{\ell}_1 , C_{1}=i_1}\\
	& =\prod_{h=2}^n\prod_{m'=1}^d\E_{m'}\left[e^{-\bm \theta \cdot \bo Z_{T-t_h}}\right]^{ \ell_{h, m'} - g_{h, m'}}p_{i_h}(\bm \ell_h)\bm \ell_h^{\lfloor\bo g_h\rfloor}\alpha_{i_{h}} {\rm d}t_h\\
	& \quad \times  \prod_{h=1}^{n-1}\prod_{m=1}^d\prod_{\substack{ q\leq g_{h,m}:\\ k_{v(h,m,q)}\geq 2}}\E_{m}\left[Z^{(c(v(h,m,q)))}_{t_{v(h,m,q)}-t_h} \prod_{m'=1}^d\E_{m'}\left[e^{-\bm\theta\cdot \bo Z_{T-t_{v(h,m,q)}}}\right]^{ Z^{(m')}_{t_{v(h,m,q)}-t_h}- \delta_{c(v(h,m,q)), m'}}\right]\\
	& \hspace{4cm}\times\frac{\prod_{h=2}^n \prod_{m=1}^d\E_m\left[N_{T-t_{h}}e^{-\bm \theta \cdot \bo Z_{T-t_{h}}}\right]
	^{\#\{ q\leq g_{h, m}\ :\ k_{v(h,m,q)}=1\}}
}{\prod_{m=1}^d\prod_{\substack{q\leq g_{1,m}\\ k_{v(1,m,q)}\geq 2}} \E_{m}\left[N_{T-t_1}^{\floor{k_{v(1,m,q)}}}e^{-\bm \theta\cdot \bo Z_{T-t_1}}\right]} .
\end{split}
\end{equation}Finally, using Proposition \ref{propoFirstSplittingEventWithPartitionChildAndType}, we get \begin{equation}\label{eqnFirstSplittingTimeUsedForTheJointSplittingTimes}
	\begin{split}
	\Q^{(k),\bm \theta }_{T,r}&\paren{\tau_1\in {\rm d}t_1,\mathcal{P}_{1}=\bo{P}_1,\bo{L}_{v(1)}=\bm{\ell}_1 , C_{1}=i_1}
	 = \prod_{m=1}^d\E_{m}\left[e^{-\bm \theta \cdot \bo Z_{T-t_1}}\right]^{\ell_{1, m}- g_{1, m}}p_{i_1}(\bm \ell_1)\bm \ell_1^{\floor{\bo g_1}}\\
	& \times 
	\E_{r}\left[Z^{(i_1)}_{t_1}\prod_{m=1}^d\E_{m}\paren{e^{-\bm \theta\cdot \bo Z_{T-t_1}}}^{ Z^{(m)}_{t_1}-\delta_{i_1,m}}\right]
	\frac{\prod_{m=1}^d\prod_{q=1}^{g_{1, m}}\E_{m}\left[N_{T-t_1}^{\floor{ k_{v(1,m,q)}}}e^{-\bm \theta \cdot \bo Z_{T-t_1}}\right]}{\E_{r}\left[N_{T}^{\floor{k}}e^{-\bm \theta\cdot \bo Z_{T}}\right]}\alpha_{i_1} {\rm d}t_1.
	\end{split}
\end{equation} 
Consider the numerator of the quotient on the right-hand side of the previous identity. We distinguish between vertices with a single mark and those with at least two marks, as follows
\begin{align*}
	\prod_{m=1}^d\prod_{q=1}^{g_{1,m}}\E_{m}\left[N_{T-t_1}^{\floor{ k_{v(1,m,q)}}}e^{-\bm \theta \cdot \bo Z_{T-t_1}}\right] &
	 = \prod_{m=1}^d\E_{m}\left[N_{T-t_1}e^{-\bm \theta \cdot \bo Z_{T-t_1}}\right]	^{\#\{ q\leq g_{1,m}:k_{v(1,m,q)}=1\}}\\
	& \hspace{3cm} \times \prod_{m=1}^d\prod_{\substack{ q\leq g_{1,m}:\\ k_{v(1,m,q)}\geq 2}}\E_{m}\left[N_{T-t_1}^{\floor{ k_{v(1,m,q)}}}e^{-\bm \theta \cdot \bo Z_{T-t_1}}\right] . 
\end{align*}Note that the last term in the identity on the right-hand side  above cancels with the denominator of the quotient on the right-hand side of the identity \eqref{eqnJointLawOfSplittingEventsAfterTheFirstOneV2}. 
Moreover, the first term in the identity on the right-hand side  above completes the product for $h=1$ in the numerator of the quotient on the right-hand side of the identity \eqref{eqnJointLawOfSplittingEventsAfterTheFirstOneV2}. 
Therefore, multiplying \eqref{eqnJointLawOfSplittingEventsAfterTheFirstOneV2} and \eqref{eqnFirstSplittingTimeUsedForTheJointSplittingTimes}, we deduce the desired result.
\end{proof}
\subsection{Subpopulations sizes} We now analyze the sizes of subpopulations coming off the spines under $\Q^{(k),\bm \theta }_{T,r}$. Here, we see a significant difference from the single-type case.

Suppose a non-spine particle $v$ is alive at time $t$ and recall that  $\bo{Z}^{[v]}_{T}$ denotes the vector of number of individuals alive at time $T$ whose ancestor is $v$. Then by the Markov branching property (see Lemma \ref{lemmaMrkovPropertyUnderQ}) and identity \eqref{eqnChangeOfMeasureMtypeBGWDiscounted}, we obtain
\[
	\begin{split}
		& \Q^{(k),\bm \theta }_{T,r}\left[e^{-{\bm \mu}\cdot \bo{Z}^{[v]}_{T}}\bigg| \mathcal{F}_t\right]  =\Q^{(0),\bm \theta }_{T-t,c(v)}\left[e^{-{\bm \mu}\cdot \bo{Z}_{T-t}}\right]=  \E^{\bm \theta}_{T-t,c(v)}\left[e^{-{\bm \mu}\cdot \bo Z_{T-t}}\right] =\frac{\E_{c(v)}\left[e^{-(\bm \theta+\bm \mu)\cdot \bo Z_{T-t}}\right]}{\E_{c(v)}\left[e^{-\bm \theta\cdot \bo Z_{T-t}}\right]}.
	\end{split}
\]
Similarly, if a particle $v$ alive at time $t$ is carrying $j$ spines then, again  by the Markov branching property  and identity \eqref{defMeasureQkTrGivenTopologicalInfo}, we have
\[
	\begin{split}
		& \Q^{(k),\bm \theta }_{T,r}\left[e^{-{\bm \mu}\cdot \bo{Z}^{[v]}_{T}}\bigg| \mathcal{F}^{(k)}_t\right]  =\Q^{(j),\bm \theta }_{T-t,c(v)}\left[e^{-{\bm \mu}\cdot \bo{Z}_{T-t}}\right] =\frac{\E_{c(v)}\left[N_{T-t}^{\floor{j}}e^{-(\bm \theta+\bm \mu)\cdot \bo Z_{T-t}}\right]}{\E_{c(v)}\left[N_{T-t}^{\floor{j}}e^{-\bm \theta\cdot \bo Z_{T-t}}\right]}.
	\end{split}
\]
In particular, the Laplace transform for the number of descendants at time $T$ of births coming off a single spine branch $(k=1)$ started at time $t$ (plus the spine itself) is given by 
\[
\frac{\E_{c(v)}\left[N_{T-t}e^{-(\bm \theta+\bm \mu)\cdot \bo Z_{T-t}}\right]}{\E_{c(v)}\left[N_{T-t}e^{-\bm \theta\cdot \bo Z_{T-t}}\right]}.
\]

Similarly to the single-type case, the rate of births off any spine particle and the corresponding offspring distribution are independent (see Lemma 3.5 in \cite{MR4718398}). However, in the single-type case, this rate does not depend on the number of spines following it. In our setting,  the rate of births off any spine particle strongly depends on both the number of spines following it and its type (see Lemma \ref{lemmaGeneralRateWhenABranchingEventOccursKnowingInfo} and its remark below). More precisely, in the single-type case the law of the number of descendant coming off any spine branch is determined by 
\[
\frac{\E\left[Z_{T-t}e^{-( \theta+ \mu)  Z_{T-t}}\right]}{\E\left[Z_{T-t}e^{- \theta  Z_{T-t}}\right]}.
\]
In our setting, if the spine branch has $h$-marks and the spine changes from type $r$ to type $j$, such law is determined by 
\[
\frac{\E_{j}\left[N^{\floor{h}}_{T-t}e^{-(\bm \theta+\bm \mu)\cdot \bo Z_{T-t}}\right]}{\E_{j}\left[N^{\floor{h}}_{T-t}e^{-\bm \theta\cdot \bo Z_{T-t}}\right]}.
\]
This implies that the lineage decomposition of the ancestral tree cannot be carried out as easily as in the single-type case (see Subsection 3.3 in  \cite{MR4718398})), since it requires knowing both the marks in any given lineage and the type following that lineage.

\subsection{Proof of Theorem \ref{teofsplitting1}}\label{subsectionJointLawOfSplittingTimesUnderPUnif}
Now, let us compute the functionals of our interest under  $\p^{(k)}_{unif,T,r}$. In particular we are interested in the law of  the splitting times under such probability. In order to do so, we recall that in Lemma \ref{lemmaFromTheMeasureQToTheMeasureEunif} we obtained the term 
\[
\Q^{(k),\bm \theta}_{T,r}\left[\frac{\mathbf{1}_{A}}{N^{\floor{k}}_{T}e^{-\bm \theta\cdot \bo Z_{T}}}\right],\qquad \qquad A\in \mathcal{F}^{(k)}_T.
\]
To simplify this term, we apply  the Beta integral formula
\begin{equation}\label{gamma+beta}
 \frac{\Gamma(k)}{x^{\floor{k}}}=\int_0^\infty (e^y-1)^{k-1}e^{-yx}{\rm d}y,
\end{equation}
together with Fubini's theorem and the identity $N_T=\vec{1}\cdot \bo Z_{T}$ to obtain
\begin{equation}
\begin{split}\label{eqnfsplitting}
\Q^{(k),\bm \theta}_{T,r}&\left[\frac{1}{N_{T}^{\floor{k}}e^{-\bm \theta\cdot \bo Z_{T}}}\mathbf{1}_{A}\right]= \frac{1}{(k-1)!}\int_0^\infty(e^\phi-1)^{k-1}\Q^{(k),\bm \theta }_{T,r}\left[\mathbf{1}_{A}e^{(\bm \theta-\phi\vec{1})\cdot \bo Z_{T}}\right]{\rm d}\phi.
\end{split}
\end{equation} 
Finally, we use the change of measure \eqref{defMeasureQkTrGivenAllInfo} and observe, for any $\bm \mu \in \mathbb{R}^d_+$, that 
\[
		 \Q^{(k),\bm \theta}_{T,r}\left[\mathbf{1}_{A}e^{(\bm \theta-\bm \mu)\cdot \bo Z_{T}}\right]  = \Q^{(k),\bm \mu}_{T,r}(A)\frac{\E_{r}\left[N^{\floor{k}}_Te^{-\bm \mu\cdot \bo Z_T}\right]}{\E_{r}\left[N^{\floor{k}}_Te^{-\bm \theta\cdot \bo Z_T}\right]}.
		 \]
Since each term on the right-hand side of the above identity are well-defined, it follows that \eqref{eqnfsplitting}  is well-defined as well.
Thus putting all pieces together, we obtain the following key result.

\begin{propo}\label{propofsplitting1}
For any $k\geq 1$, $T\in \re_+$,  $r\in [d]$ and $A\in \F^{(k)}_T$, we have
\begin{equation}\label{eqnfsplitting2}
	\begin{split}
\mathbb{P}^{(k)}_{unif,T,r}\paren{A}&= \frac{1}{(k-1)!}\frac{1}{\mathbb{P}_{r}\paren{\ N_T\geq k}}\int_0^\infty(e^\phi-1)^{k-1}\Q^{(k),\phi\vec{1}}_{T,r}(A)\E_{r}\left[N^{\floor{k}}_Te^{-\phi\vec{1}\cdot \bo Z_T}\right]{\rm d}\phi.
\end{split}
\end{equation}
\end{propo}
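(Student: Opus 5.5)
The plan is to chain Lemma \ref{lemmaFromTheMeasureQToTheMeasureEunif}, the Beta-integral representation \eqref{gamma+beta}, and the change-of-measure identity displayed just before the statement; everything is already prepared in the text preceding the proposition.

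\emph{Step 1 (reduction to a discounted $\Q$-expectation).} Fix $A\in\F^{(k)}_T$ and any auxiliary $\bm\theta\in\re^d_+$; the choice $\bm\theta=\bm 0$ is convenient, and the final expression will not depend on it. Lemma \ref{lemmaFromTheMeasureQToTheMeasureEunif} gives
\[
\p^{(k)}_{unif,T,r}(A)=\E_r\Big[N^{\floor{k}}_Te^{-\bm\theta\cdot\bo Z_T}\Big|N_T\ge k\Big]\,\Q^{(k),\bm\theta}_{T,r}\Big[\frac{\mathbf 1_A}{N^{\floor{k}}_Te^{-\bm\theta\cdot\bo Z_T}}\Big].
\]
Since $N^{\floor{k}}_T$ vanishes off $\{N_T\ge k\}$, the first factor equals $\E_r[N^{\floor{k}}_Te^{-\bm\theta\cdot\bo Z_T}]/\p_r(N_T\ge k)$. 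Under $\Q^{(k),\bm\theta}_{T,r}$ we have $N_T\ge k$ a.s., so the quotient is finite.

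\emph{Step 2 (Beta integral and Fubini).} Apply \eqref{gamma+beta} with $x=N_T=\vec 1\cdot\bo Z_T$, which is legitimate on $\{N_T\ge k\}$. Tonelli applies because the integrand is nonnegative, and this yields \eqref{eqnfsplitting}:
\[
\Q^{(k),\bm\theta}_{T,r}\Big[\frac{\mathbf 1_A}{N^{\floor{k}}_Te^{-\bm\theta\cdot\bo Z_T}}\Big]=\frac{1}{(k-1)!}\int_0^\infty(e^\phi-1)^{k-1}\Q^{(k),\bm\theta}_{T,r}\big[\mathbf 1_Ae^{(\bm\theta-\phi\vec1)\cdot\bo Z_T}\big]\,{\rm d}\phi .
\]

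\emph{Step 3 (change of discount).} Use the definition \eqref{defMeasureQkTrGivenAllInfo} of $\Q^{(k),\bm\mu}_{T,r}$ with $\bm\mu=\phi\vec1$. The density with respect to $\p^{(k)}_r$ is $g_{k,T}e^{-\bm\mu\cdot\bo Z_T}$ divided by its mean, so
\[
\Q^{(k),\bm\theta}_{T,r}\big[\mathbf 1_Ae^{(\bm\theta-\bm\mu)\cdot\bo Z_T}\big]=\Q^{(k),\bm\mu}_{T,r}(A)\,\frac{\E_r[N^{\floor{k}}_Te^{-\bm\mu\cdot\bo Z_T}]}{\E_r[N^{\floor{k}}_Te^{-\bm\theta\cdot\bo Z_T}]}.
\]
Substituting this into Step 2 and multiplying by the prefactor from Step 1, the factors $\E_r[N^{\floor{k}}_Te^{-\bm\theta\cdot\bo Z_T}]$ cancel. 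This gives exactly \eqref{eqnfsplitting2}.

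\emph{Main obstacle: finiteness with no moment assumptions.} If the $k$-th moment is infinite, the normalisation $\E_r[N^{\floor{k}}_T]$ for $\bm\theta=\bm 0$ may be infinite, and then $\Q^{(k),\bm 0}_{T,r}$ is undefined. To avoid this I would instead take $\bm\theta$ with strictly positive entries. Then $\E_r[N^{\floor{k}}_Te^{-\bm\theta\cdot\bo Z_T}]<\infty$, because $n^{\floor{k}}e^{-cn}$ is bounded; it is also positive, by the non-simple and irreducible hypothesis \eqref{hyp1}, which makes $\p_r(N_T\ge k)>0$ for $T>0$. Likewise every $\bm\mu=\phi\vec1$ with $\phi>0$ gives a well-defined measure, and the single point $\phi=0$ is Lebesgue-null. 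With this choice all steps are rigorous, and the result is independent of $\bm\theta$.

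The case $k=1$ is immediate, since $(e^\phi-1)^0=1$ and \eqref{gamma+beta} reduces to $\int_0^\infty e^{-\phi x}{\rm d}\phi=1/x$. Finally, Theorem \ref{teofsplitting1} follows by taking $A=\Delta_T(n)$ and inserting Proposition \ref{propoJointSplittingEventWithPartitionChildAndType} with $\bm\theta=\phi\vec1$.
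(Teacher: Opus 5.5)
Your proposal is correct and is essentially the paper's own proof: Lemma \ref{lemmaFromTheMeasureQToTheMeasureEunif}, then the Beta integral \eqref{gamma+beta} with Tonelli, then the switch of discount from $\bm\theta$ to $\phi\vec{1}$ via \eqref{defMeasureQkTrGivenAllInfo}; your choice of a strictly positive auxiliary $\bm\theta$, together with treating $\phi=0$ as a null set, makes rigorous the finiteness that the paper only asserts. One small correction: $\p_r(N_T\ge k)>0$ does not follow from \eqref{hyp1} alone, since a process in which every type has at most one child but some type can die childless is non-simple yet has $N_T\le 1$; however, this positivity is implicitly required for $\p^{(k)}_{unif,T,r}$ to be defined at all, so your argument is unaffected.
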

We note that for the event $A=\Delta_T(n)$ defined in \eqref{deltatn}, the formula derived above, combined with Proposition \ref{propoJointSplittingEventWithPartitionChildAndType} yields the proof of Theorem \ref{teofsplitting1}. In particular, it provides   an explicit representation of the distribution of the genealogy of a uniform sample of size $k$ taken at time $T$.


\section*{Appendix}
\label{Appendix}

{\bf Proof of Equation \eqref{eqmultytipeSam}.} Let $\bo x=(x_1, \cdots, x_d)\in \mathbb{Z}^d_+$ and  for $u\in \mathbb{R}$, we introduce the vector valued function $\theta(u)=(\theta_1(u), \ldots, \theta_d(u))$. From the branching property,  we see
\[
\E_{\bo x}\left[e^{-\bm \theta(u)\cdot \bo Z_{T}}\right]=\prod_{m=1}^{d}\E_{m}\left[e^{-\bm \theta (u)\cdot \bo Z_{T}}\right]^{x_m}. 
\]Thus, the Markov property implies
\[
F_{T,r}(e^{-\bm \theta})=\E_r\left[\prod_m\E_m\left[e^{-\bm \theta\cdot \bo Z_{T-t}}\right]^{Z^{(m)}_{t}}\right]=F_{t,r}(\Vec{F}_{T-t}(e^{-\bm \theta})),
\]
where $\Vec{F}_{T-t}$ is defined in \eqref{eqnDerivativeProbGenFn}. On the other hand, it is clear that for $\bo v=(v_1, \ldots,v_d )$, we have
\begin{align*}
&\E_r\left[Z^{(i)}_{t}\prod_{m=1}^d v_{m}^{Z^{(m)}_{t}-\delta_{m,i}}\right]=\frac{\partial}{\partial v_i}\E_r\left[\prod_{m=1}^d v_m^{Z^{(m)}_{t}}\right].
\end{align*}
Therefore
\begin{align*}
	 \frac{\rm d}{\rm du}F_{t,r}(\Vec{F}_{T-t,\cdot}(e^{-\bm \theta(u)}))&=\frac{\rm d}{\rm d u}\E_r\left[\prod_{m=1}^d\E_m\left[e^{-\bm \theta(u)\cdot \bo Z_{T-t}}\right]^{Z^{(m)}_{t}}\right]\\
	&=\sum_{j=1}^d\frac{\partial}{\partial v_j}\E_r\left[\prod_{m=1}^d v_m^{Z^{(m)}_{t}}\right]\Bigg|_{v_\ell=\E_\ell\left[e^{-\bm \theta(u)\cdot \bo Z_{T-t}}\right], \ell\in [d]} \frac{\rm d}{\rm d u}\E_j\left[e^{-\bm \theta(u)\cdot \bo Z_{T-t}}\right]\\
	&=\sum_{j=1}^d\E_r\left[Z^{(j)}_{t}\prod_{m=1}^d\E_m\left[e^{-\bm \theta(u)\cdot \bo Z_{T-t}}\right]^{Z^{(m)}_{t}-\delta_{m,j}}\right]\frac{\rm d}{\rm du}F_{T-t,j}(e^{-\bm \theta (u)}).
\end{align*}Hence
\begin{align*}
&\E_r\left[Z^{(i)}_{t}\prod_m\E_m\left[e^{-\bm \theta\cdot \bo Z_{T-t}}\right]^{Z^{(m)}_{t}-\delta_{m,i}}\right]\frac{\rm d}{\rm d u}F_{T-t,i}(e^{-\bm \theta (u)})\\
&=\frac{\rm d}{\rm du}F_{t,r}(\Vec{F}_{T-t,\cdot}(e^{-\bm \theta(u)}))-\sum_{j\neq i}\E_r\left[Z^{(j)}_{t}\prod_{m=1}^d\E_m\left[e^{-\bm \theta(u)\cdot \bo Z_{T-t}}\right]^{Z^{(m)}_{t}-\delta_{m,j}}\right] \frac{\rm d}{\rm du}F_{T-t,j}(e^{-\bm \theta (u)}).\\
\end{align*}

{\bf Proof of Equation \eqref{eqnCheckingJointLawConcidesWithUnitype}. } Recall that we are in the single-type case and that $\theta\ge 0$. From the semigroup identity $F_T(s)=F_t(F_{T-t}(s))$, we can write
	\begin{align*}
		& \E\left[Z_{t_{v(h,\ell)}-t_{h}} \E\left[e^{-\theta Z_{T-t_{v(h,\ell)}}}\right]^{Z_{t_{v(h,\ell)}-t_{h}}-1}\right]=F'_{t_{v(h,\ell)}-t_{h}}\paren{F_{T-t_{v(h,\ell)}}(e^{-\theta})}=\frac{F'_{T-t_{h}}(e^{-\theta})}{F'_{T-t_{v(h,\ell)}}(e^{-\theta})},
	\end{align*}which implies
	\begin{align*}
		& \prod_{h=0}^{n-1}\prod_{\substack{\ell\leq g_h\\ k_{v(h,\ell)}\geq 2}}
		\E\left[Z_{t_{v(h,\ell)}-t_{h}} \E\left[e^{-\theta\cdot Z_{T-t_{v(h,\ell)}}}\right]^{Z_{t_{v(h,\ell)}-t_{h}}-1}\right]= \prod_{h=0}^{n-1}\frac{\big(F'_{T-t_{h}}(e^{-\theta})\big)^{\#\{\ell\leq g_h:k_{v(h,\ell)}\geq 2 \}}}{\prod_{\substack{\ell\leq g_h\\ k_{v(h,\ell)}\geq 2}}F'_{T-t_{v(h,\ell)}}(e^{-\theta})}.
	\end{align*}Let us analyse the denominator of the right-hand side of the previous identity. Since $t_{v(h,\ell)}$ is a spine splitting time, this means that $v(h,\ell)$ carries at least two marks. Also, note that $\{ t_{v(h,\ell)}, h\in [n-1]\cup\{0\},\ell\in [g_h]\}=\{t_h,h\in [n] \}$ since each spine splitting time appears once. 
	The latter implies  
	\[
	\prod_{h=0}^{n-1}\prod_{\substack{\ell\leq g_h\\ k_{v(h,\ell)}\geq 2}}F'_{T-t_{v(h,\ell)}}(e^{-\theta})=\prod_{h=1}^nF'_{T-t_{h}}(e^{-\theta}). 
	\]
	Putting all pieces together and observing that 
$	\E[Z_{T-t_{h}}e^{-\theta  Z_{T-t_{h}}}]=F'_{T-t_h}(e^{-\theta})$, we get
	\begin{align*}
		 \prod_{h=0}^{n-1}\prod_{\substack{\ell\leq g_h\\ k_{v(h,\ell)}\geq 2}}
		&\E\left[Z_{t_{v(h,\ell)}-t_{h}} \E\left[e^{-\theta Z_{T-t_{v(h,\ell)}}}\right]^{Z_{t_{v(h,\ell)}-t_{h}}-1}\right]\left(  \prod_{h=1}^n \E\left[Z_{T-t_{h}}e^{-\theta  Z_{T-t_{h}}}\right]
		^{\#\{ \ell\leq g_{h}:k_{v(h,\ell)}=1\}}\right)\\
		& =F'_{T}(e^{-\theta}) \prod_{h=1}^{n}\big(F'_{T-t_{h}}(e^{-\theta})\big)^{\#\{\ell\leq g_h:k_{v(h,\ell)}\geq 2 \}-1} \left(\prod_{h=1}^n \big(F'_{T-t_{h}}(e^{-\theta})\big)
		^{\#\{ \ell\leq g_{h}:k_{v(h,\ell)}=1\}}\right)\\
		& =F'_{T}(e^{-\theta})\prod_{h=1}^n \big(F'_{T-t_{h}}(e^{-\theta})\big)
		^{g_h-1}, 
	\end{align*}as expected.

%
%

\begin{funding}
 The first author was partially supported by the Deutsche Forschungsgemeinschaft (through grant DFG-SPP-2265). 
The second author acknowledges the support of the New Zealand Aotearoa Royal Society Te Ap\={a}rangi
Marsden Fund (22-UOA-052).

 The third author was supported by the grant CF-2023-I-2566 from SECIHTI, Mexico.
\end{funding}

\providecommand{\bysame}{\leavevmode\hbox to3em{\hrulefill}\thinspace}
\providecommand{\MR}{\relax\ifhmode\unskip\space\fi MR }
\providecommand{\MRhref}[2]{%
  \href{http://www.ams.org/mathscinet-getitem?mr=#1}{#2}
}
\providecommand{\href}[2]{#2}

\end{document}